\newtheorem{theorem}{Theorem}
\newtheorem{lemma}[theorem]{Lemma}
\newtheorem{coro}[theorem]{Corollary}
\newtheorem{conjecture}[theorem]{Conjecture}
\theoremstyle{definition}
\newtheorem{defi}[theorem]{Definition}
\newtheorem{exam}[theorem]{Example}
\newcommand{\Z}{\mathbb{Z}}
\newcommand{\R}{\mathbb{R}}
\newcommand{\sym}{\mathcal{S}}
\def\eref#1{{\ensuremath(\ref{#1})}}
\def\sref#1{\S$\ref{#1}$}
\def\lref#1{Lemma~$\ref{#1}$}
\def\tref#1{Theorem~$\ref{#1}$}
\def\egref#1{Example~$\ref{#1}$}
\def\cyref#1{Corollary~$\ref{#1}$}
\def\cjref#1{Conjecture~$\ref{#1}$}
\renewcommand{\=}{\equiv}
\newcommand{\nequiv}{\not\equiv}
\renewcommand{\leq}{\leqslant}
\renewcommand{\ge}{\geqslant}
\renewcommand{\le}{\leqslant}
\tikzset{square matrix/.style={
    matrix of nodes,
    column sep=-\pgflinewidth, row sep=-\pgflinewidth,
    nodes={draw,
      minimum height=15pt,
      anchor=center,
      text width=13pt,
      align=center,
      inner sep=0pt
    },
  },
  square matrix/.default=2cm
}
\colorlet{trans}{blue!50}
\colorlet{trans2}{red!20}
\DeclareMathOperator{\per}{per}
\newcommand{\ev}{^{\rm ev}}
\DeclareMathOperator{\adj}{adj}
\newcommand\mmid{\hspace*{0.5mm}|\hspace*{0.5mm}}
\renewcommand\epsilon{\varepsilon}
\begin{document}

\author{Darcy~Best\thanks{Research supported by Endeavour Postgraduate Scholarship and NSERC CGS-D.}\ \ \ and \ Ian~M.~Wanless\thanks{\texttt{ian.wanless@monash.edu}. Research supported by ARC grant DP150100506.}\\
School of Mathematics\\ Monash University\\ Australia}

\title{Parity of transversals of Latin squares}
\date{}

\maketitle

\begin{abstract}
We introduce a notion of parity for transversals, and use it
to show that in Latin squares of order $2 \bmod 4$, the number
of transversals is a multiple of 4. 
We also demonstrate a number of relationships (mostly congruences
modulo 4) involving $E_1,\dots, E_n$, where $E_i$ is 
the number of diagonals of a given Latin square 
that contain exactly $i$ different symbols. 

Let $A(i\mmid j)$ denote the matrix obtained by deleting row $i$ and
column $j$ from a parent matrix $A$.  Define $t_{ij}$ to be the number
of transversals in $L(i\mmid j)$, for some fixed Latin square $L$. We
show that $t_{ab}\equiv t_{cd}\bmod2$ for all $a,b,c,d$ and $L$. Also,
if $L$ has odd order then the number of transversals of $L$ equals
$t_{ab}$ mod 2. We conjecture that $t_{ac} + t_{bc} + t_{ad} + t_{bd}
\equiv 0 \bmod 4$ for all $a,b,c,d$.

In the course of our investigations we prove several results that
could be of interest in other contexts. For example, we show that the
number of perfect matchings in a $k$-regular bipartite graph on $2n$
vertices is divisible by $4$ when $n$ is odd and $k\equiv0\bmod 4$. We
also show that
$$\per A(a \mmid c)+\per A(b \mmid c)+\per A(a \mmid d)+\per A(b \mmid d) \equiv 0 \bmod 4$$
for all $a,b,c,d$, when $A$ is an integer matrix of odd order with
all row and columns sums equal to $k\equiv2\bmod4$. 

\bigskip
\noindent
Keywords: parity, Latin square, transversal, permanent, 
Latin rectangle, perfect matching, permanental minor, bipartite graph

\noindent
AMS Classifications 05B15, 15A15, 05C70

\end{abstract}

\section{\label{s:intro}Introduction}

A {\em Latin square} is an $n \times n$ matrix consisting of $n$ distinct
symbols where each symbol appears exactly once in each row and each
column. Our Latin squares will have their rows and columns indexed by
$[n]=\{1,2,\dots,n\}$ and will also have their symbols chosen from
$[n]$. Latin squares can then be thought of as a set of {\em entries}
$\{(r,c,s)\} \subset [n]^3$ where each distinct pair of entries agree
in at most one coordinate. The three coordinates of an entry are its
row index, column index and symbol.
A {\em diagonal} of a Latin square is a selection
of $n$ entries, with exactly one entry from each row and each
column. The {\em weight} of a diagonal is the number of distinct symbols on
that diagonal. A diagonal of weight $n$ is called a
\emph{transversal}.  Historically, transversals in Latin squares were
first used as the building blocks of mutually orthogonal Latin
squares (MOLS). They have since garnered a lot of interest on their own
(see~\cite{trans-survey}~for a survey).  A {\em partial transversal of
  length $k$} is a selection of $k$ entries so that no two entries
share the same row, column or symbol.  A partial transversal of length
$k < n$ is not the same thing as a diagonal of weight $k$. While these
objects are related, the distinction is important when counting them.

Over half a century ago, Ryser~\cite{ryser} put forward the following
famous conjecture.

\begin{conjecture}[Ryser's Conjecture]\label{conj:ryser}
 Every Latin square of odd order has a transversal.
\end{conjecture}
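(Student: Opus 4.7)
The plan would be to try to establish the stronger statement that the number $E_n$ of transversals of an odd-order Latin square $L$ is itself \emph{odd}; since an odd number is nonzero, this would yield Ryser's Conjecture. The strategy is to mimic, in the odd case, the kind of parity argument that the present paper develops for the even case.

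The first step is to set up two independent identities among the $E_i$. The trivial identity $n!=\sum_{i=1}^n E_i$ counts diagonals by weight. The second identity would come from a signed sum over diagonals: for each diagonal $D$, let $\sigma(D)$ be the sign of the permutation sending column indices to symbols along $D$, and consider $\sum_D \sigma(D)$. This alternating sum is the determinant of a $0/1$ permutation-array matrix built from $L$, and one hopes to evaluate it modulo $2$ by exploiting that every row and every column of $L$ is itself a permutation. Combining the two identities, one attempts to pin down $E_n \bmod 2$.

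The second step is to leverage the extra structure of a Latin square beyond that of a pair of permutation arrays. The Latin property encodes the $n^2$ entries as a $3$-partite $3$-regular hypergraph on $3n$ vertices, and parity-type arguments on such hypergraphs, analogous to those underlying Balasubramanian's theorem ($E_n$ is even for even $n$) and the present paper's strengthening ($E_n \equiv 0 \bmod 4$ when $n \equiv 2 \bmod 4$), should be brought to bear. One would likely perform an induction or use a row/symbol swap under which transversals fall into cancelling pairs, combined with a residue computation for some invariant derived from permanental minors like the $t_{ij}$ considered later in the paper.

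The hard part, and indeed the reason this conjecture has been open since 1967, is that parity methods can at best determine $E_n$ modulo some power of $2$, and they do not rule out $E_n=0$ unless the residue they produce is nonzero. No parity invariant is known that forces $E_n$ to be odd for every odd-order Latin square, and constructing one (if it exists) is the central obstacle. I would expect any successful proof to combine parity information with an additional structural or algebraic ingredient that actively excludes the possibility $E_n=0$, rather than relying on a congruence alone.
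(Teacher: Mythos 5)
The statement you are addressing is labelled a \emph{conjecture} in the paper, and the paper offers no proof of it: Ryser's Conjecture has been open since 1967 and is verified only for $n\le 9$ by computation. So there is no ``paper's own proof'' to compare against, and your proposal is, by your own admission in its final paragraph, not a proof but a plan whose central obstacle you correctly identify as unresolved.

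Beyond that, the specific stronger statement you propose to establish --- that $E_n$ is \emph{odd} for every Latin square of odd order $n$ --- is actually false, and the paper says so explicitly in the introduction: ``many Latin squares of odd order have an even number of transversals, so the stronger form of Conjecture~\ref{conj:ryser} is false.'' (This stronger form is the conjecture that Balasubramanian attributed to Ryser.) Hence the first step of your plan cannot succeed for any choice of parity invariant: whatever congruence you derive for $E_n \bmod 2$ must be consistent with the existing examples where $E_n$ is even and nonzero, so it cannot force $E_n$ to be odd. The closest true result in this direction is Theorem~\ref{th:delta-equals-trans}, which shows $t_{rc}\equiv E_n\bmod 2$ for odd $n$; as Example~\ref{eg:existbycong} illustrates, this can occasionally certify existence of a transversal when some $t_{rc}$ happens to be odd, but it gives nothing when all the relevant quantities are even. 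Your concluding observation --- that a congruence alone cannot exclude $E_n=0$ and some genuinely new structural ingredient is needed --- is the correct assessment and matches the paper's own framing of the problem.
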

 
This conjecture has been shown to be true for $n \leq 9$ by
computation \cite{MMW06}. In 1990, Balasubramanian~\cite{Bal90} showed
that the number of transversals in a Latin square of even order is 
itself even. He claimed that this was a partial proof of a stronger
form of \cjref{conj:ryser}, namely that the number of transversals in
a Latin square of order $n$ should agree with $n$ mod $2$.
Despite \cite{Bal90} attributing this conjecture to \cite{ryser}
it is nowhere to be found in the latter work. It is possible that
Ryser made the conjecture, but we have been unable to find evidence
of this. It is also worth remarking that many Latin squares of odd
order have an even number of transversals, so the stronger form of
\cjref{conj:ryser} is false. However, it does raise the intriguing
possibility of proving existence of objects (in this case, transversals)
by studying congruences satisfied by the number of those objects.
We achieve this on a very modest scale (cf. \egref{eg:existbycong}),
but mostly use it as motivation to unearth what we consider
to be interesting patterns in numbers of transversals and related
quantities.

Akbari and Alipour~\cite{AA04} developed the ideas from
Balasubramanian's result to show that the number of diagonals with
weight $n-1$ is even in every Latin square. We outline and expand on
these ideas in \sref{s:2-mod-4-trans}. We then show that the number of
transversals in a Latin square of order $n \equiv 2 \bmod 4$ is
necessarily a multiple of 4. We show this by exploiting a notion of
parity for transversals. Parity of permutations is, of course, a very
well-known concept. To our knowledge it had not previously been
usefully applied to transversals. However, applying it to the
permutations that define the rows, columns and symbols of a single
Latin square or set of MOLS has previously revealed many insights
\cite{AL15,ABMW14,AT92,Alp17,CW16,DGGL10,FHW18,Gly10,GB12,Jan95,KMOW14,Kot12,SW12,Wan04}. In
particular, each Latin square has a row-parity $\pi_r$, which is the
$\Z_2$ sum of the parities of the permutations that define the rows,
and a column parity $\pi_c$ which is defined similarly for columns
(there is also a symbol parity $\pi_s$, but it is a function of $\pi_r$ and
$\pi_c$, see \cite{DGGL10,Jan95,Wan04}).  We will demonstrate several
new ways to partition Latin squares of certain orders into two types
(independently of their $\pi_r$ and $\pi_c$).

In \sref{s:depleted}, we consider counts of transversals in (not
necessarily square) submatrices of Latin squares.  A {\em Latin array} 
is a matrix of symbols in which no symbol is repeated within any row, or
within any column. A transversal of an $m\times n$ Latin array is a
selection of $\min(m,n)$ entries in which no pair of entries share
their row, column or symbol. Transversals in Latin arrays are
naturally encountered in attempts to find transversals of Latin
squares by induction. They have been the subject of a recent burst of
activity \cite{BN19,BHWWW18,KY20,MPS19} on the question posed in
\cite{AA04} of how many symbols in a Latin array are enough to make a
transversal unavoidable.  We take a different tack, considering
congruences satisfied by the number of transversals in Latin arrays
formed by removing one row and/or one column from a Latin square.

Transversals are diagonals with the maximum possible number of symbols.
In \sref{s:diags} we count diagonals according to how many symbols
they contain and demonstrate several relationships between the
resulting numbers. In doing so we extend on results obtained in
\cite{AA04,Bal90}.

One of the key tools in our results is a matrix function known as
the permanent. Let $M_n(\Z)$ denote the $n\times n$ integer matrices.
The {\em permanent} of a matrix $A=[a_{ij}]$ in $M_n(\Z)$ is defined by
\begin{equation}\label{e:perdef}
\per A=\sum_{\sigma\in\sym_n}\prod_{i=1}^n a_{i\sigma(i)}
\end{equation}
where the sum is over all permutations in the symmetric group $\sym_n$ 
on $[n]$. 
At several points, we use Ryser's formula~\cite{Rys63} to compute the
permanent of a matrix. It states that for $A=[a_{ij}]\in M_n(\Z)$,
\begin{equation}\label{eq:rysers-formula}
  \per A = \sum_{S \subseteq[n]} (-1)^{n-|S|} \prod_{i=1}^{n}\sum_{j \in S}a_{ij}.
\end{equation}
We also make frequent use of the fact that \eref{e:perdef} agrees,
modulo 2, with the definition of the determinant. In a determinant,
some diagonals are given a negative sign, but $-1\equiv1\bmod2$ so
$\per A\equiv\det A\bmod 2$. As a simple example of how this observation 
can be used, we have:

\begin{lemma}\label{lm:even-perm}
  If $A \in M_n(\Z)$ is such that all row sums are even, then
  $\det A$ and $\per A$ are both even.
\end{lemma}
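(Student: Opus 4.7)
The plan is to invoke the observation, made immediately before the lemma, that $\per A \equiv \det A \pmod{2}$, so it suffices to prove $2 \mid \det A$.

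For the determinant part, I would argue by an elementary column operation. Let $c_1,\dots,c_n$ denote the columns of $A$, and consider replacing $c_1$ by $c_1+c_2+\cdots+c_n$. This is an elementary column operation and does not change the determinant. The $i$-th entry of the new first column is $\sum_{j=1}^n a_{ij}$, which is exactly the $i$-th row sum of $A$ and hence even by hypothesis. Thus the modified matrix has a column all of whose entries are divisible by $2$, so we can pull a factor of $2$ out of that column; therefore $\det A$ is even.

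Equivalently, one can phrase the argument over $\Z/2\Z$: the hypothesis says $A\mathbf{1}\equiv\mathbf{0}\pmod 2$, where $\mathbf{1}$ is the all-ones column vector. Hence $A$ is singular modulo $2$, so $\det A\equiv 0\pmod 2$.

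Combining either form of the above with $\per A\equiv\det A\pmod 2$ then gives $\per A\equiv 0\pmod 2$ as well, completing the proof. There is no real obstacle here; the lemma is essentially a warm-up illustrating the $\per\equiv\det\pmod 2$ principle that the paper has just highlighted, and the only content is the elementary linear-algebra fact that an integer matrix whose row sums are all even is singular mod $2$.
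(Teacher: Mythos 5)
Your proof is correct and matches the paper's: the second formulation you give (the columns are linearly dependent over $\Z_2$ because $A\mathbf{1}\equiv\mathbf{0}\bmod 2$, hence $\det A\equiv 0\bmod 2$, and then $\per A\equiv\det A\bmod 2$) is exactly the argument in the paper, with your column-operation version being a mild rephrasing of the same idea.
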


\begin{proof}
  Since the sum of the entries in each row is even, the columns of
  $A$ are linearly dependent over $\Z_2$. Thus, $\det A \equiv 0
  \bmod 2$, from which the claim follows.
\end{proof}

Throughout the paper, $J$ is an all-ones matrix of the appropriate
order and $\Lambda_n^k$ is the set of all $(0,1)$-matrices of order
$n$ which contain exactly $k$ ones in each row and each column.
We also need notation for the {\em conjugates} of a Latin square $L$. 
For each permutation $abc$ in
$\sym_3$ (written in image notation) there is an $abc$-conjugate of $L$.
It is the Latin square obtained by applying the permutation $abc$ to
the three coordinates in the entries of $L$. For example, the $213$-conjugate
of $L$ is the usual matrix transpose of $L$.

\section{\label{s:2-mod-4-trans}Transversals of Latin squares of even order}

In this section, we layout the ideas used first by
Balasubramanian~\cite{Bal90}, then again by Akbari and
Alipour~\cite{AA04} to count the number of transversals in even
ordered Latin squares modulo 2. For consistency with \cite{AA04}, we
will define $E_m=E_m(L)$ to be the number of diagonals in $L$ that
contain exactly $m$ distinct symbols. In particular, $E_n(L)$ is the
number of transversals in $L$ if $L$ has order $n$. The key idea is to
count transversals using inclusion-exclusion.

\begin{defi}\label{def:angle-r}
Let $f(x_1,\dots,x_n)$ be an arbitrary polynomial from $\R^n$ to
$\R$. Then $\langle r \rangle f$ denotes the sum of the values of $f$
at the $\binom{n}{r}$ vectors in $\R^n$ which have $r$ coordinates
equal to $1$ and $n-r$ coordinates equal to $0$.
\end{defi}

The following result is a slight generalisation of both
\cite[Lemma~2]{Bal90} and \cite[Theorem~2.1]{AA04}. 

\begin{lemma}\label{lem:trick}
Let $f(x_1,\dots,x_n)$ be an arbitrary polynomial from $\R^n$ to
$\R$. Then the sum of the coefficients of monomials in $f$ containing
exactly $m$ distinct variables is
$$\sum_{r=0}^m(-1)^{m-r}\binom{n-r}{n-m}\langle r\rangle f.$$
\end{lemma}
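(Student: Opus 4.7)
The plan is to reduce to single monomials by linearity, then invert a triangular binomial system.

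First I would fix a monomial $\mu = c \, x_{i_1}^{a_1}\cdots x_{i_k}^{a_k}$ with exactly $k$ distinct variables, and compute $\langle r\rangle\mu$. Evaluating $\mu$ at a $0/1$ vector gives $c$ if all of $x_{i_1},\dots,x_{i_k}$ are set to $1$, and $0$ otherwise; the number of $0/1$ vectors with exactly $r$ ones that achieve this is $\binom{n-k}{r-k}$. So $\langle r\rangle\mu = c\binom{n-k}{r-k}$, with the convention that the binomial vanishes when $r<k$. Grouping monomials by the number of distinct variables and writing $C_k$ for the sum of their coefficients, linearity gives
\begin{equation*}
\langle r\rangle f \;=\; \sum_{k=0}^{n} \binom{n-k}{r-k}\, C_k.
\end{equation*}

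The claim of the lemma is therefore the linear-algebraic assertion
\begin{equation*}
\sum_{r=0}^{m}(-1)^{m-r}\binom{n-r}{n-m}\sum_{k=0}^{n}\binom{n-k}{r-k}\,C_k \;=\; C_m,
\end{equation*}
so I would swap the order of summation and prove that for each $k\le m$ the inner sum
\begin{equation*}
S_{k,m} \;:=\; \sum_{r=k}^{m} (-1)^{m-r}\binom{n-r}{n-m}\binom{n-k}{r-k}
\end{equation*}
equals $[k=m]$. The case $k=m$ is immediate (only $r=m$ contributes, giving $1$).

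The remaining step, which is the core of the argument, is to show $S_{k,m}=0$ for $k<m$. I would substitute $s=r-k$, rewrite $\binom{n-r}{n-m}=\binom{n-k-s}{m-k-s}$, and apply the standard trinomial-revision identity
\begin{equation*}
\binom{n-k}{s}\binom{n-k-s}{m-k-s} \;=\; \binom{n-k}{m-k}\binom{m-k}{s},
\end{equation*}
after which $S_{k,m}$ factors as $\binom{n-k}{m-k}\sum_{s=0}^{m-k}(-1)^{m-k-s}\binom{m-k}{s} = \binom{n-k}{m-k}(1-1)^{m-k}=0$ since $m-k>0$. This finishes the proof. There is no real obstacle here beyond recognising that the triangular system relating the $C_k$ to the $\langle r\rangle f$ is invertible by a single binomial identity; the only care needed is the bookkeeping with the convention $\binom{n-k}{r-k}=0$ for $r<k$, which ensures that extending the $k$-sum beyond $r$ introduces no spurious terms.
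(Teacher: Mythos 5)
Your proof is correct and follows essentially the same route as the paper's: reduce to a single monomial with $k$ distinct variables, use $\langle r\rangle\mu = c\binom{n-k}{r-k}$, and collapse the resulting double sum via trinomial revision and the alternating binomial identity $(1-1)^{m-k}=0$. The only cosmetic difference is that you phrase this as inverting a triangular system in the $C_k$, whereas the paper applies the sum directly to one monic monomial and invokes linearity.
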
 

\begin{proof}
Suppose that $\mu$ is a monic monomial containing $t$ distinct variables.
Then
\begin{align*}
\sum_{r=0}^m(-1)^{m-r}\binom{n-r}{n-m}\langle r\rangle \mu
&=\sum_{r=t}^m(-1)^{m-r}\binom{n-r}{n-m}\binom{n-t}{r-t}\\
&=\binom{n-t}{n-m}\sum_{r=t}^m(-1)^{m-r}\binom{m-t}{r-t}\\
&=\begin{cases}
1&\text{if $t=m$,}\\
0&\text{otherwise}.
\end{cases}
\end{align*}
The result follows.
\end{proof}

For any transversal, $\{(r_i,c_i,s_i)\}$, we define three
corresponding permutations of the index set $[n]$ by 
$\sigma_{r}(r_i) = c_i$, $\sigma_{c}(c_i) = s_i$ 
and $\sigma_{s}(s_i) = r_i$. The following result is immediate.

\begin{lemma}\label{lm:parity-equals-0}
Let $T = \{(r_i,c_i,s_i)\}$ be a transversal of a Latin square $L$
with corresponding permutations $\sigma_r, \sigma_c$ and
$\sigma_s$. Then $\sigma_r \circ \sigma_c \circ \sigma_s$ is the identity
permutation.
\end{lemma}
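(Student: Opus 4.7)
The plan is simply to unwind the definitions and trace an arbitrary element of $[n]$ through the composition. Because $T$ is a transversal of an order-$n$ Latin square, the row coordinates $\{r_i : 1 \le i \le n\}$ comprise all of $[n]$, so it suffices to verify that $\sigma_r \circ \sigma_c \circ \sigma_s$ fixes each $r_i$.

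Applying the three defining rules in succession gives the chain
\[
r_i \xrightarrow{\sigma_r} c_i \xrightarrow{\sigma_c} s_i \xrightarrow{\sigma_s} r_i,
\]
so the composition fixes every $r_i$ and hence is the identity on all of $[n]$.

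There is no genuine obstacle here; the lemma is a bookkeeping statement that formalises the cyclic symmetry of the triple (row, column, symbol). The one subtle point is the composition convention: for the chain above to correspond to $\sigma_r \circ \sigma_c \circ \sigma_s$ the symbol $\circ$ must be read left-to-right, i.e.\ $\sigma_r$ is applied first. Under the right-to-left convention the same identity would be written $\sigma_s \circ \sigma_c \circ \sigma_r = 1$, which is obtained in exactly the same way by tracing $r_i \to c_i \to s_i \to r_i$. Either form carries the same downstream content, namely $\mathrm{sgn}(\sigma_r)\,\mathrm{sgn}(\sigma_c)\,\mathrm{sgn}(\sigma_s) = 1$, so an even number of the three permutations are odd. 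This is the parity information that the lemma is plainly being set up to deliver, and it will plug directly into the parity-of-transversals machinery developed in Section~\ref{s:2-mod-4-trans}.
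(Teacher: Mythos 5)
Your proof is correct and follows essentially the same route as the paper's, which traces the cycle starting from $s_i$ (namely $s_i \xrightarrow{\sigma_s} r_i \xrightarrow{\sigma_r} c_i \xrightarrow{\sigma_c} s_i$) rather than from $r_i$; starting points and composition conventions aside, both arguments are the same one-line unwinding of the definitions. Your explicit remark about the composition convention, and that the only content actually used downstream is $\epsilon(\sigma_r)+\epsilon(\sigma_c)+\epsilon(\sigma_s)=0$, is accurate.
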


\begin{proof}
For any given symbol $s_i$, we have 
$s_i \xrightarrow{\sigma_s} r_i \xrightarrow{\sigma_r} c_i 
\xrightarrow{\sigma_c} s_i$.
\end{proof}

As an immediate corollary of \lref{lm:parity-equals-0}, we have that
$\epsilon(\sigma_r)+\epsilon(\sigma_c)+\epsilon(\sigma_s) = 0$, where
$\epsilon : \sym_n \rightarrow \Z_2$ is the standard parity homomorphism
on the symmetric group $\sym_n$.
Thus, we can classify transversals into four types: $T^{000}$,
$T^{011}$, $T^{101}$ or $T^{110}$ where the superscript records the
parities of $\sigma_r$, $\sigma_c$ and $\sigma_s$, respectively. We
will use these parities to aid in counting transversals.

\begin{defi} \label{def:det-Latin}
Let $L$ be a Latin square of order $n$. The \emph{parity of a
  transversal} is the parity of the permutation
$\sigma_r$ defined above. We define $E^{\pm}_n(L)$ to be the
number of transversals in $L$ with $\epsilon(\sigma_r)=0$ (\emph{even
transversals}) minus the number of transversals in $L$ with
$\epsilon(\sigma_r)=1$ (\emph{odd transversals}).
\end{defi}

Though the symbols in our Latin square $L$ are normally from $[n]$, we
sometimes need to utilise the corresponding matrix $L[X]$, where each symbol
$i$ is replaced with a variable $x_i$.

\begin{theorem} \label{th:trans-equation}
Let $L$ be a Latin square of order $n$. Then
\begin{equation}\label{eq:trans-incl-excl}
  E_n(L) = \sum_{r=0}^n(-1)^{n-r}\langle r\rangle \per L[X],
\end{equation}
and
\begin{equation}\label{eq:signed-trans-incl-excl}
  E^{\pm}_n(L) = \sum_{r=0}^n(-1)^{n-r}\langle r\rangle \det L[X].
\end{equation}
\end{theorem}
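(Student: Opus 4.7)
The plan is to expand both $\per L[X]$ and $\det L[X]$ diagonal-by-diagonal, recognise that $E_n(L)$ and $E^{\pm}_n(L)$ are coefficients of a specific monomial, and then invoke \lref{lem:trick} with $m = n$.

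First I would write out $\per L[X] = \sum_{\sigma \in \sym_n} \prod_{i=1}^n x_{L(i,\sigma(i))}$. Each $\sigma$ determines a diagonal $D_\sigma$ whose entries carry the symbols $L(i,\sigma(i))$, so the monomial attached to $D_\sigma$ is a product of $n$ variables (with repetition), whose number of \emph{distinct} factors is exactly the weight of $D_\sigma$. Because $\per L[X]$ is homogeneous of degree $n$ in $n$ variables, the only monomial with $n$ distinct variables is $x_1 x_2 \cdots x_n$, and its coefficient is precisely the number of transversals, namely $E_n(L)$. Applying \lref{lem:trick} to $f = \per L[X]$ with $m = n$ gives the sum of coefficients of monomials with exactly $n$ distinct variables, and the binomial factor $\binom{n-r}{0} = 1$ collapses, yielding \eref{eq:trans-incl-excl}.

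Next I would repeat the same bookkeeping for $\det L[X] = \sum_{\sigma} \epsilon(\sigma) \prod_i x_{L(i,\sigma(i))}$. The column permutation $\sigma$ is exactly the $\sigma_r$ from \lref{lm:parity-equals-0}, so even transversals contribute $+x_1 \cdots x_n$ and odd transversals contribute $-x_1 \cdots x_n$ to the coefficient of $x_1 x_2 \cdots x_n$; any diagonal of weight less than $n$ contributes a monomial missing at least one variable. Hence the coefficient of $x_1 \cdots x_n$ in $\det L[X]$ is $E^{\pm}_n(L)$, and another application of \lref{lem:trick} with $m = n$ delivers \eref{eq:signed-trans-incl-excl}.

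The only real subtlety is verifying that \lref{lem:trick} isolates exactly this coefficient even though it is stated for general polynomials in which other monomials of degree $n$ can exist. This is where the homogeneity of $\per L[X]$ and $\det L[X]$ is essential: a degree-$n$ monomial in $n$ variables with $n$ distinct variables must be $x_1 x_2 \cdots x_n$ with coefficient $1$, so the sum of coefficients over such monomials reduces to one coefficient, and that is what we want. Once this is checked, nothing else is required beyond matching up definitions, so the argument is short and the primary work is conceptual rather than computational.
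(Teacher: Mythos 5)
Your proposal is correct and follows essentially the same route as the paper: both apply the $m=n$ case of Lemma~\ref{lem:trick} to $\per L[X]$ and $\det L[X]$ and identify the degree-$n$ monomials with $n$ distinct variables with transversals (counted with sign in the determinant case). Your extra remarks on homogeneity and on $\sigma=\sigma_r$ just make explicit what the paper leaves implicit.
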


\begin{proof}
We use the $m=n$ case of \lref{lem:trick}.  The terms in $\per L[X]$
which include $n$ distinct variables correspond to the transversals in
$L$.  Similar terms in $\det L[X]$ correspond to the transversals of
$L$ up to sign. Any transversal which has even parity increases the
sum in \eref{eq:signed-trans-incl-excl} by 1 and any odd transversal
decreases the sum by 1.
\end{proof}

The following lemma is adapted from \cite[Lemma~1]{New78}.

\begin{lemma}\label{lem:complement-det}
Let $A = [a_{ij}]$ be a $(0,1)$-matrix of even order. Define $A^* = [b_{ij}]$ by
$$b_{ij} = \begin{cases}
  a_{ij}   & \text{if the } i^{th} \text{ row has an even number of ones,}\\
  1-a_{ij} & \text{otherwise}.
 \end{cases}
$$ 
Then $\det A + \det A^*$ is even.
\end{lemma}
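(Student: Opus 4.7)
The plan is to work modulo $2$, where $A^*$ becomes a rank-one perturbation of $A$, and then reduce the statement to a quantity that vanishes because $n$ is even. Since $1-a_{ij} \equiv 1+a_{ij} \bmod 2$, flipping row $i$ modulo $2$ amounts to adding the all-ones row vector to that row. Letting $\mathbf{1}$ denote the all-ones column vector of length $n$ and $u \in \{0,1\}^n$ the indicator of the set $R$ of rows of $A$ with odd row sum, this gives the clean identity $A^* \equiv A + u\mathbf{1}^T \bmod 2$, where the perturbation $u\mathbf{1}^T$ has rank one.

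Next I would invoke the matrix determinant lemma in its general form,
$$\det(A+u\mathbf{1}^T) = \det A + \mathbf{1}^T \adj(A)\, u,$$
which holds as a polynomial identity in the entries of $A$, $u$, $\mathbf{1}$, so no invertibility assumption on $A$ is needed. Reducing mod $2$, and using that the determinants of matrices congruent mod $2$ are themselves congruent mod $2$, we obtain $\det A^* \equiv \det A + \mathbf{1}^T \adj(A)\, u \bmod 2$. It therefore suffices to prove $\mathbf{1}^T \adj(A)\, u \equiv 0 \bmod 2$.

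The crucial observation is that the row-sum vector $A\mathbf{1}$ reduces modulo $2$ to $u$, by the very definition of $R$. Combining this with the fundamental identity $\adj(A)\, A = (\det A)\, I$ yields
$$\mathbf{1}^T \adj(A)\, u \equiv \mathbf{1}^T \adj(A)\, (A\mathbf{1}) = (\det A)\, \mathbf{1}^T \mathbf{1} = n\, \det A \bmod 2.$$
Because $n$ is even, this vanishes mod $2$, so $\det A + \det A^* \equiv 2\det A \equiv 0 \bmod 2$, as required.

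The only slightly non-routine point is justifying the matrix determinant lemma for a possibly singular $A$, which I would handle by viewing both sides as polynomials in the entries that agree over the dense locus where $A$ is invertible, hence agree identically over $\Z$. The evenness of $n$ enters only once, at the very last step, where the factor $n$ annihilates the otherwise obstructing $\det A$; this also shows the parity hypothesis on $n$ cannot be removed (e.g.\ $A = I_n$ for odd $n$).
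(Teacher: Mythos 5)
Your proof is correct, but it takes a genuinely different route from the paper's. The paper argues by elementary row manipulation: after permuting the odd-sum rows to the top, it writes each complemented row as $\delta-A_i$ (with $\delta$ the all-ones row), subtracts the first row from the others, splits the first row by multilinearity into $-A_1$ plus $\delta$, recovers $(-1)^k\det A$ from one piece, and kills the other piece with \lref{lm:even-perm} because every row of that matrix has even sum --- this last step being where the evenness of $n$ enters, via the row $\delta$ summing to $n$. You instead package the complementation as a rank-one perturbation $A^*\equiv A+u\mathbf{1}^T\bmod 2$, invoke the matrix determinant lemma $\det(A+u\mathbf{1}^T)=\det A+\mathbf{1}^T\adj(A)\,u$ (correctly noting it is a polynomial identity, hence valid for singular $A$), and then exploit the happy coincidence that $u\equiv A\mathbf{1}$ together with $\adj(A)A=(\det A)I$ to reduce the correction term to $n\det A$. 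The paper's version is self-contained and uses only tools already established; yours is shorter, conceptually cleaner, and actually proves the slightly stronger statement $\det A^*\equiv(n+1)\det A\bmod 2$ for every order $n$, which in particular shows $\det A^*$ is even when $n$ is odd --- a by-product the paper's argument does not give. Both proofs ultimately use the parity of $n$ through the all-ones vector, in your case via $\mathbf{1}^T\mathbf{1}=n$, and your closing counterexample $A=I_n$ for odd $n$ correctly shows the hypothesis is necessary.
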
 
 
\begin{proof}
Let $\delta$ be a row vector of ones. By permuting rows if necessary,
we may assume that the first $k$ rows of $A$ have odd sum and the
remaining rows have even sum (permuting rows may alter the sign of the
determinant, but this does not matter modulo 2). Thus, we have  
\begin{align*}
\pm\det A^* &= \det\left(
  \begin{array}{c}
  \delta-A_1\\
  \delta-A_2\\  
  \vdots\\
  \delta-A_k\\
  A_{k+1}\\
  \vdots\\
  A_n
  \end{array}\right)
  =
  \det\left(
  \begin{array}{c}
  \delta-A_1\\
  A_1-A_2\\
  \vdots\\
  A_1-A_k\\
  A_{k+1}\\
  \vdots\\
  A_n
  \end{array}\right)
  =
  \det\left(
  \begin{array}{c}
  -A_1\\
  A_1-A_2\\
  \vdots\\
  A_1-A_k\\
  A_{k+1}\\
  \vdots\\
  A_n
  \end{array}\right)
  +
  \det\left(
  \begin{array}{c}
  \delta\\
  A_1-A_2\\
  \vdots\\
  A_1-A_k\\
  A_{k+1}\\
  \vdots\\
  A_n
  \end{array}\right)\!\!\\
&
  =(-1)^{k}\det A +
  \det\left(
  \begin{array}{c}
  \delta\\
  A_1-A_2\\
  \vdots\\
  A_1-A_k\\
  A_{k+1}\\
  \vdots\\
  A_n
  \end{array}\right)
\equiv\det A\bmod 2,
\end{align*}
by \lref{lm:even-perm}.  The result follows.
\end{proof}

Consider the special case of \lref{lem:complement-det} where the row sums of
$A$ are all the same. If all of the row
sums are even, then the result shows nothing interesting. However,
when each row sum is odd, \lref{lem:complement-det} tells us that
$\det A + \det(J-A) \equiv 0 \bmod 2$.  Balasubramanian~\cite{Bal90}
used this result and \eref{eq:trans-incl-excl} to show the following
theorem (actually, \cite{Bal90} showed a generalisation of this
result, which we discuss later). We give a full proof here, as we will
use a similar technique for several of our new results.

\begin{theorem}\label{th:bala}
  If $L$ is a Latin square of even order $n$ then $L$ has an even
  number of transversals.
\end{theorem}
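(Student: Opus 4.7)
The plan is to combine the inclusion-exclusion formula \eref{eq:trans-incl-excl} with the fact that $\per \equiv \det \bmod 2$, and then use Lemmas \ref{lm:even-perm} and \ref{lem:complement-det} to kill every term modulo $2$.

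First I would unpack \eref{eq:trans-incl-excl} modulo $2$. For any subset $S\subseteq[n]$ with $|S|=r$, let $L_S$ denote the $(0,1)$-matrix obtained from $L$ by replacing each symbol in $S$ with $1$ and each symbol outside $S$ with $0$. Then $\langle r\rangle \per L[X] = \sum_{|S|=r} \per L_S$, and since $(-1)^{n-r}\equiv 1\bmod 2$ and $\per L_S \equiv \det L_S\bmod 2$, equation \eref{eq:trans-incl-excl} reduces to
$$E_n(L) \equiv \sum_{S\subseteq[n]} \det L_S \bmod 2.$$
Crucially, $L_S$ has all row and column sums equal to $|S|$, because $L$ is a Latin square.

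Next I would split the sum according to the parity of $|S|$. If $|S|$ is even then the row sums of $L_S$ are even, so $\det L_S \equiv 0\bmod 2$ by \lref{lm:even-perm}. If $|S|$ is odd then every row sum of $L_S$ is odd, so in the notation of \lref{lem:complement-det} we have $L_S^\ast = J - L_S = L_{[n]\setminus S}$; that lemma then gives $\det L_S + \det L_{[n]\setminus S} \equiv 0\bmod 2$. Because $n$ is even, the complement $[n]\setminus S$ also has odd size, so the subsets of odd size pair off with their complements and the corresponding determinants cancel in pairs modulo $2$.

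Combining the two cases, every term in $\sum_{S\subseteq[n]} \det L_S$ contributes $0$ modulo $2$, whence $E_n(L)\equiv 0\bmod 2$. The only subtle step is matching the complement construction of \lref{lem:complement-det} with the involution $S\mapsto [n]\setminus S$, which works precisely because $n$ is even so that odd-sized subsets pair with odd-sized subsets; this is where evenness of the order is actually used, and it is the only nontrivial bookkeeping in the argument.
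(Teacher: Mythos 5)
Your proposal is correct and is essentially the paper's own argument: both reduce \eref{eq:trans-incl-excl} modulo $2$ to a sum of determinants of the $(0,1)$-matrices $L_S$, kill the even-$|S|$ terms with \lref{lm:even-perm}, and pair each odd-$|S|$ term with its complement via \lref{lem:complement-det}. The only cosmetic difference is that the paper phrases the reduction through $E^{\pm}_n(L)$ and \eref{eq:signed-trans-incl-excl} and pairs all complementary terms uniformly, while you pair only the odd-sized subsets; the substance is identical.
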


\begin{proof}
Note that $E_n(L) \equiv E^{\pm}_n(L) \bmod 2$. We pair up
complementary terms in \eref{eq:signed-trans-incl-excl}. In other
words, each term of the sum $\langle r \rangle \det L[X]$ is paired
with the unique term in $\langle n-r \rangle \det L[X]$ for which the
indexing zero-one vectors sum to the all-ones vector. For each of
these pairs of terms, we have one of two situations. If $r$ is even,
then $n-r$ is also even and so both determinants are even, by
\lref{lm:even-perm}.  Alternatively, if $r$ is odd, then each row sum
in $L[X]$ is odd, so $\det L[X] + \det(J-L[X]) \equiv 0 \bmod 2$ by
\lref{lem:complement-det}. Thus, each of the $2^{n-1}$ pairs
contributes a multiple of two to the summation in
\eref{eq:signed-trans-incl-excl}. The result follows.
\end{proof}

To proceed, we need a few linear algebraic results. 

\begin{lemma}\label{lem:det-even-k-0-mod-4}
If $A \in \Lambda_n^k$ where both $n$ and $k$ are even, 
then $\det A \equiv 0 \bmod 4$.
\end{lemma}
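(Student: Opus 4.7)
The plan is to combine a single sweep of row operations with a direct application of \lref{lm:even-perm}. The idea is that the constant row sums of $A$ let us factor $k$ out of the determinant, while the hypothesis that $n$ is even will provide a further factor of $2$ from \lref{lm:even-perm} applied to the residual matrix.

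Concretely, I would first add rows $2,3,\dots,n$ of $A$ into row~$1$. This composite elementary operation leaves $\det A$ unchanged, and because every column of $A$ sums to $k$, the resulting first row is $(k,k,\dots,k)$. Factoring $k$ out of this row gives
\[
\det A \;=\; k\,\det B,
\]
where $B$ is the matrix obtained from $A$ by replacing its first row with $(1,1,\dots,1)$ while leaving all other rows untouched. (The degenerate case $k=0$ forces $A=0$ and is immediate.)

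Next I would verify that every row of $B$ still has an even sum: the new first row sums to $n$, which is even by hypothesis, and every other row of $B$ coincides with the corresponding row of $A$, so sums to $k$, which is also even. Hence \lref{lm:even-perm} applied to $B$ yields $\det B \equiv 0 \bmod 2$. Combining this with $\det A = k\det B$ and the hypothesis that $k$ itself is even shows that $\det A$ is divisible by $2k$, and therefore by $4$.

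There is essentially no obstacle beyond spotting the right first move. Once one sees that collapsing all other rows into row~$1$ produces a row divisible by $k$ while preserving the even row sums of the remaining rows, the conclusion follows from a single invocation of \lref{lm:even-perm}. The only place where the parity of $n$ actually enters is in guaranteeing that the new all-ones row of $B$ has even sum, which is precisely what allows the extra factor of $2$ to be peeled off.
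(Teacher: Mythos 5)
Your argument is correct, but it takes a genuinely different route from the paper. The paper disposes of this lemma in one line by citing Newman's theorem that $\det A$ is a multiple of $k\cdot\gcd(n,k)$ for $A\in\Lambda_n^k$, and then observing that both factors are even. You instead give a self-contained elementary proof: collapsing all rows into the first row uses the column sums to produce the constant row $(k,\dots,k)$, factoring out $k$ leaves a matrix $B$ whose first row sums to $n$ and whose remaining rows still sum to $k$, so all row sums of $B$ are even and \lref{lm:even-perm} gives $\det B\equiv0\bmod2$; hence $\det A=k\det B$ is divisible by $2k$ and so by $4$. Each step checks out, including the harmless $k=0$ case. What your approach buys is independence from the external reference \cite{New78} at the cost of a few more lines; what it gives up is a slightly weaker divisibility ($2k$ rather than $k\cdot\gcd(n,k)$), though that distinction is immaterial for the mod $4$ conclusion. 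In effect you have reproved the special case of Newman's lemma that is actually needed here, using essentially the same row-collapse idea that the paper deploys elsewhere (e.g.\ in the proof of \lref{lem:complement-det}).
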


\begin{proof}
Since $A \in \Lambda_n^k$, we have that $\det A$ is a multiple of
$k\cdot \gcd(n,k)$, by \cite[Theorem~2]{New78}. The desired result
follows, since $k$ and $\gcd(n,k)$ are both even.
\end{proof}

\begin{lemma}\label{lem:sum-of-complement-0-mod-4}
 Let $n \equiv 2 \bmod 4$ and $k \equiv 1 \bmod 2$. If $A \in \Lambda_n^k$, 
 then $$\det A +\det(J-A)\equiv 0 \bmod 4.$$
\end{lemma}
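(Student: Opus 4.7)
The plan is to obtain a closed form for $\det(J-A)$ using the rank-one structure of $J=\mathbf{1}\mathbf{1}^T$, then use the regularity $A\mathbf{1}=\mathbf{1}^T A=k\mathbf{1}$ to pin down the answer modulo~$4$.

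First, since $n$ is even, $\det(J-A)=\det(A-J)$, and applying the matrix determinant lemma to the rank-one perturbation $-\mathbf{1}\mathbf{1}^T$ (equivalently, expanding $\det(J-A)$ multilinearly in the rows and discarding those terms in which $\mathbf{1}^T$ appears in two or more rows) gives
$$\det(J-A)=\det A-\mathbf{1}^T(\adj A)\mathbf{1},$$
and hence
$$\det A+\det(J-A)=2\det A-\mathbf{1}^T(\adj A)\mathbf{1}.$$
So the problem reduces to computing $\mathbf{1}^T(\adj A)\mathbf{1}$ modulo~$4$.

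Second, I extract a linear relation from the regularity of $A$. Combining $\mathbf{1}^T A=k\mathbf{1}^T$ with $A(\adj A)=(\det A)I$ yields the integer identity
$$k\,\mathbf{1}^T(\adj A)\mathbf{1}=\mathbf{1}^T A(\adj A)\mathbf{1}=(\det A)\,\mathbf{1}^T\mathbf{1}=n\det A.$$
Since $k$ is odd it is a unit modulo~$4$ with $k^2\equiv 1\bmod 4$, so multiplying through by $k$ gives $\mathbf{1}^T(\adj A)\mathbf{1}\equiv kn\det A\bmod 4$. The hypothesis $n\equiv 2\bmod 4$ together with $k$ odd forces $kn\equiv 2\bmod 4$, so $\mathbf{1}^T(\adj A)\mathbf{1}\equiv 2\det A\bmod 4$. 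Substituting into the first step gives $\det A+\det(J-A)\equiv 2\det A-2\det A\equiv 0\bmod 4$.

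The main (very mild) obstacle is the last step: the identity $kx=n\det A$ determines $x=\mathbf{1}^T(\adj A)\mathbf{1}$ only after one checks that $k$ is invertible modulo~$4$ and that $kn\bmod 4$ is independent of which odd $k$ one uses; this is precisely where both parts of the hypothesis (that $n\equiv 2\bmod 4$ and that $k$ is odd) are used, and I do not anticipate needing \lref{lem:det-even-k-0-mod-4} in this approach, although that lemma might give an alternative proof by embedding $A$ in a larger even-row-sum matrix.
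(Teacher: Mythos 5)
Your proof is correct and is essentially the paper's proof in disguise: the two identities you establish, $\det(J-A)=\det A-\mathbf{1}^T(\adj A)\mathbf{1}$ (via the matrix determinant lemma, using $n$ even) and $k\,\mathbf{1}^T(\adj A)\mathbf{1}=n\det A$, combine to give exactly $k\det(J-A)=(-1)^{n-1}(n-k)\det A$, which is the identity \eref{e:compdet} that the paper simply cites from \cite[Lemma~1]{New78}. The concluding step --- reducing modulo $4$ using $n\equiv2\bmod4$ and the invertibility of the odd number $k$ --- is identical to the paper's; the only difference is that you supply a self-contained derivation of the cited identity rather than invoking it.
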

 
\begin{proof}
Since $A \in \Lambda_n^k$, we have that 
\begin{equation}\label{e:compdet}
k\det(J-A) = (-1)^{n-1} (n-k)\det A,
\end{equation} 
by \cite[Lemma~1]{New78}. The result follows by noting
that $n-k \equiv k \bmod 4$ and that $k$ has a multiplicative inverse
modulo $4$.
\end{proof}

We now have the framework to start counting transversals. 
The proof of our next result is very similar to that of \tref{th:bala}.

\begin{theorem}\label{th:det-mult-4}
If $L$ is a Latin square of order $n \equiv 2 \bmod 4$ then 
$E^{\pm}_n(L) \equiv 0 \bmod 4$.
\end{theorem}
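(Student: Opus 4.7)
The plan is to mirror the proof of \tref{th:bala} but argue modulo $4$, substituting the sharper divisibility results \lref{lem:det-even-k-0-mod-4} and \lref{lem:sum-of-complement-0-mod-4} for the roles played by \lref{lm:even-perm} and \lref{lem:complement-det}. I would start from \eref{eq:signed-trans-incl-excl}, regard the sum as ranging over all $2^n$ zero--one vectors $v\in\{0,1\}^n$ (with $\langle r\rangle$ collecting those having exactly $r$ ones), and pair each $v$ with its complement $\mathbf{1}-v$.

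The crucial preliminary observation is that substituting a vector $v$ with exactly $r$ ones into $L[X]$ produces a $(0,1)$-matrix in $\Lambda_n^r$: because each symbol of $L$ appears exactly once in every row and column, switching on exactly $r$ symbols switches on exactly $r$ entries in every row and every column. The companion substitution at $\mathbf{1}-v$ lies in $\Lambda_n^{n-r}$ and is precisely the entrywise complement of the first matrix in the sense required by \lref{lem:sum-of-complement-0-mod-4}.

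Because $n\equiv 2 \bmod 4$ forces $n$ to be even, the two terms in each complementary pair come with equal coefficients, $(-1)^{n-r}=(-1)^{n-(n-r)}=(-1)^r$. I would then split on the parity of $r$. If $r$ is even then both $r$ and $n-r$ are even, and each of the two determinants is individually $\equiv 0\bmod 4$ by \lref{lem:det-even-k-0-mod-4}. If $r$ is odd then both $r$ and $n-r$ are odd, and the \emph{sum} of the two determinants is $\equiv 0 \bmod 4$ by \lref{lem:sum-of-complement-0-mod-4}. In either case the complementary pair contributes a multiple of $4$ to \eref{eq:signed-trans-incl-excl}, and summing over the $2^{n-1}$ pairs yields $E_n^\pm(L)\equiv 0\bmod 4$.

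The main obstacle I would anticipate is largely bookkeeping: confirming that the coefficients on paired terms really do agree (which uses evenness of $n$) and that the hypothesis $n\equiv 2\bmod 4$---rather than merely $n$ even---is what allows \lref{lem:sum-of-complement-0-mod-4} to be invoked for every odd~$r$. No self-pairing issue arises, since $v\neq \mathbf{1}-v$ for any $v\in\{0,1\}^n$, so the pairing partitions the $2^n$ substitutions into exactly $2^{n-1}$ disjoint pairs.
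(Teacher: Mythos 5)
Your proposal is correct and follows essentially the same route as the paper's own proof: pair each zero--one substitution with its complement, apply \lref{lem:det-even-k-0-mod-4} to both terms when $r$ is even, and apply \lref{lem:sum-of-complement-0-mod-4} to the pair when $r$ is odd. The extra bookkeeping you supply (that the substitution lands in $\Lambda_n^r$, that the signs on paired terms agree because $n$ is even, and that no vector is self-complementary) is all accurate and merely makes explicit what the paper leaves implicit.
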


\begin{proof}
Again, we pair up complementary terms in
\eref{eq:signed-trans-incl-excl}. That is, each term of the sum
$\langle r \rangle \det L[X]$ is paired with the unique term in
$\langle n-r \rangle \det L[X]$ for which the indexing zero-one vectors 
sum to the all-ones vector. For each of these pairs of terms, we have one of
two situations. If $r$ is even, then $n-r$ is also even and we use
\lref{lem:det-even-k-0-mod-4} twice to show that both terms are a
multiple of four. Alternatively, if $r$ is odd, then we use
\lref{lem:sum-of-complement-0-mod-4} to show that the terms sum to a
multiple of four. The result follows.
\end{proof}

We now use \tref{th:det-mult-4} to show our first main result, which
strengthens \tref{th:bala} for Latin squares of
singly-even order.

\begin{theorem}\label{th:trans-mult-4}
If $L$ is a Latin square of order $n \equiv 2 \bmod 4$ then 
$E_n(L) \equiv 0 \bmod 4$.
\end{theorem}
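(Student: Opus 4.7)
The plan is to apply \tref{th:det-mult-4} not only to $L$ but also to two of its conjugates, and then combine the resulting congruences using the parity trichotomy of transversals.

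By \lref{lm:parity-equals-0}, every transversal of $L$ has a parity triple $(\epsilon(\sigma_r),\epsilon(\sigma_c),\epsilon(\sigma_s))$ that sums to zero modulo $2$, so the transversals partition into the four classes $T^{000},T^{011},T^{101},T^{110}$; let $A,B,C,D$ denote their respective sizes. The crucial observation is that under the two nontrivial cyclic conjugations $L\mapsto L'$ and $L\mapsto L''$, the ``row permutation" of the corresponding transversal of the conjugate coincides with $\sigma_c$ and $\sigma_s$ of the original, respectively. This is a direct consequence of how cyclic conjugation permutes the roles of row, column and symbol on each entry. Consequently,
\begin{align*}
E_n(L) &= A+B+C+D,\\
E^{\pm}_n(L) &= A+B-C-D,\\
E^{\pm}_n(L') &= A-B+C-D,\\
E^{\pm}_n(L'') &= A-B-C+D.
\end{align*}

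Summing these four quantities yields $4A$, because a transversal with parity triple $(a,b,c)$ (satisfying $a+b+c\equiv 0\bmod 2$) contributes $1+(-1)^a+(-1)^b+(-1)^c$, which equals $4$ when $(a,b,c)=(0,0,0)$ and $0$ on the three other allowed triples. Since $L$, $L'$, and $L''$ are all Latin squares of order $n\equiv2\bmod4$, \tref{th:det-mult-4} applies to each, giving $E^{\pm}_n(L)\equiv E^{\pm}_n(L')\equiv E^{\pm}_n(L'')\equiv 0 \bmod 4$. Therefore $E_n(L)\equiv 4A\equiv 0 \bmod 4$, as required.

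The only substantive step is the identification of the row permutation of each cyclic conjugate's transversal with $\sigma_c$ or $\sigma_s$ of the original; this is a short bookkeeping check once the action of conjugation on coordinate triples is written out, and the rest of the argument is linear algebra over $\Z_4$.
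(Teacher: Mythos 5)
Your proposal is correct and is essentially identical to the paper's proof: both decompose the transversals into the four parity classes, identify the signed counts of the two cyclic conjugates with the $\sigma_c$- and $\sigma_s$-parities, sum the four linear combinations to obtain $4$ times the count of $T^{000}$, and finish by applying \tref{th:det-mult-4} to $L$, $L'$ and $L''$. No issues.
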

  
\begin{proof}
Let $T$ be the set of transversals of $L$. We define $w$, $x$, $y$ and $z$ to
be the number of transversals of type $T^{000}$, $T^{011}$, $T^{101}$
and $T^{110}$, respectively.
By definition, we have 
\begin{equation}\label{eq:trans}w+x+y+z = E_n(L)\end{equation}
and 
\begin{equation}\label{eq:d0}w+x-y-z = E^{\pm}_n(L).\end{equation}

Let $L'$ be the $312$-conjugate of $L$. There
is a natural bijection between $T$ and the set of transversals of
$L'$. Each transversal in $L'$ must be of the form
$\{(c_i,s_i,r_i)\}$, where $\{(r_i,c_i,s_i)\} \in T$. The parity of
each transversal in $L'$ depends on $\sigma_c$ for the corresponding 
transversal in $T$, so we have
\begin{equation}\label{eq:d1}
w-x+y-z = E^{\pm}_n(L').
\end{equation}
Similarly, if $L''$ is the $231$-conjugate of $L$, then each
transversal of $L''$ has parity matching that of $\sigma_s$
for the corresponding transversal in $T$. So we have 
\begin{equation}\label{eq:d2}
w-x-y+z = E^{\pm}_n(L'').
\end{equation}
The sum of \eref{eq:trans}, \ref{eq:d0}, \eref{eq:d1} and \eref{eq:d2} gives us 
\begin{equation}\label{eq:sum}
  4w = E_n(L) + E^{\pm}_n(L) + E^{\pm}_n(L') + E^{\pm}_n(L'').
\end{equation}
\tref{th:det-mult-4} applied to $L$, $L'$ and $L''$ tells us that
$E_n(L) \equiv 0 \bmod 4$.
\end{proof}

Based on computation of small squares, it seems that \tref{th:bala}
and \tref{th:trans-mult-4} are the only general modular restrictions
on the number of transversals of a Latin square. By considering sets
of Latin squares that are connected by turning intercalates (that is,
replacing a subsquare
$\left[\begin{smallmatrix}a&b\\b&a\end{smallmatrix}\right]$ with
$\left[\begin{smallmatrix}b&a\\a&b\end{smallmatrix}\right]$), we were
able to find Latin squares that satisfy every other congruence with
small modulus. For example, suppose that $9 \leq n \leq 11$ and $0
\leq k < m \leq 32$. Except where it would violate \tref{th:bala} or
\tref{th:trans-mult-4}, there is some subset of intercalates in these
Latin squares
\[
  \begin{tikzpicture}
    \begin{scope}
      \matrix (A) [square matrix]{
        1 & 2 & 3 & 4 & 5 & 6 & 7 & 8 & 9 \\
        2 & 1 & 4 & 3 & 6 & 5 & 9 & 7 & 8 \\
        3 & 6 & 1 & 8 & 7 & 9 & 5 & 2 & 4 \\
        4 & 3 & 5 & 6 & 9 & 7 & 8 & 1 & 2 \\
        5 & 4 & 2 & 9 & 8 & 1 & 6 & 3 & 7 \\
        6 & 9 & 7 & 5 & 3 & 8 & 2 & 4 & 1 \\
        7 & 8 & 9 & 1 & 2 & 3 & 4 & 5 & 6 \\
        8 & 5 & 6 & 7 & 4 & 2 & 1 & 9 & 3 \\
        9 & 7 & 8 & 2 & 1 & 4 & 3 & 6 & 5 \\
      };
      \draw[ultra thick] (A-1-1.north west) rectangle (A-9-9.south east);
    \end{scope}
    \begin{scope}[xshift=6cm]
      \matrix (A) [square matrix]{
        1 & 2 & 3 & 4 & 5 & 6 & 7 & 8 & 9 & 10 \\
        2 & 1 & 4 & 3 & 6 & 5 & 8 & 7 & 10 & 9 \\
        3 & 6 & 5 & 7 & 2 & 8 & 10 & 9 & 4 & 1 \\
        4 & 5 & 6 & 8 & 7 & 9 & 2 & 10 & 1 & 3 \\
        5 & 8 & 7 & 9 & 1 & 10 & 4 & 3 & 2 & 6 \\
        6 & 4 & 8 & 10 & 9 & 7 & 1 & 2 & 3 & 5 \\
        7 & 3 & 10 & 5 & 8 & 1 & 9 & 4 & 6 & 2 \\
        8 & 7 & 9 & 6 & 10 & 2 & 3 & 1 & 5 & 4 \\
        9 & 10 & 1 & 2 & 3 & 4 & 5 & 6 & 7 & 8 \\
        10 & 9 & 2 & 1 & 4 & 3 & 6 & 5 & 8 & 7 \\
      };
      \draw[ultra thick] (A-1-1.north west) rectangle (A-10-10.south east);
    \end{scope}
  \end{tikzpicture}
\qquad\qquad
  \begin{tikzpicture}
    \begin{scope}[xshift=2cm]
      \matrix (A) [square matrix]{
        1 & 2 & 3 & 4 & 5 & 6 & 7 & 8 & 9 & 10 & 11 \\
        2 & 1 & 4 & 3 & 6 & 5 & 8 & 7 & 11 & 9 & 10 \\
        3 & 8 & 1 & 6 & 7 & 10 & 11 & 9 & 4 & 5 & 2 \\
        4 & 11 & 2 & 8 & 9 & 7 & 5 & 10 & 1 & 3 & 6 \\
        5 & 3 & 6 & 10 & 8 & 9 & 1 & 2 & 7 & 11 & 4 \\
        6 & 4 & 7 & 9 & 10 & 11 & 2 & 3 & 8 & 1 & 5 \\
        7 & 5 & 8 & 11 & 4 & 2 & 10 & 1 & 3 & 6 & 9 \\
        8 & 7 & 9 & 5 & 11 & 1 & 6 & 4 & 10 & 2 & 3 \\
        9 & 10 & 11 & 1 & 2 & 3 & 4 & 5 & 6 & 7 & 8 \\
        10 & 6 & 5 & 7 & 3 & 8 & 9 & 11 & 2 & 4 & 1 \\
        11 & 9 & 10 & 2 & 1 & 4 & 3 & 6 & 5 & 8 & 7 \\
      };
      \draw[ultra thick] (A-1-1.north west) rectangle (A-11-11.south east);
    \end{scope}
  \end{tikzpicture}
\]
that can be turned to give a Latin square of order $n$ with
$k$~mod~$m$ transversals. We also found examples for $12 \leq n \leq
16$ with the same property, but we do not display them here for the
sake of space.  For $n \leq 7$, there are some sporadic values of $k,m
\leq 16$ where no Latin square of order $n$ contains $k$~mod~$m$
transversals. For $n = 8$, there is no Latin square that contains
$22$~mod~$63$ transversals, while there exists a Latin square that
contains $k$~mod~$m$ transversals for all other $0 \leq k < m \leq 64$
that satisfy \tref{th:bala}.  However, we believe that the
restrictions for $n\le8$ are not interesting; they are simply a result
of there being comparatively few Latin squares of these orders.

The proof of \tref{th:trans-mult-4} leads us to the following
interesting property.

\begin{coro} \label{cor:types}
 Let $L$ be a Latin square of order $n \equiv 2 \bmod 4$.  The numbers
 of transversals in $L$ of types $T^{000}$, $T^{110}$, $T^{101}$ and
 $T^{110}$ are all equal modulo $2$.
\end{coro}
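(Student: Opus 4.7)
The plan is to extract the corollary directly from the system of four linear equations
$$w+x+y+z=E_n(L),\quad w+x-y-z=E_n\ev(L),\quad w-x+y-z=E_n\ev(L'),\quad w-x-y+z=E_n\ev(L'')$$
that was set up in the proof of \tref{th:trans-mult-4}, where $w,x,y,z$ count transversals of types $T^{000},T^{011},T^{101},T^{110}$ respectively, and where $L'$ and $L''$ are the $312$- and $231$-conjugates of $L$. All three of these conjugates are Latin squares of order $n\equiv 2\bmod 4$, so \tref{th:det-mult-4} gives $E_n\ev(L),E_n\ev(L'),E_n\ev(L'')\equiv0\bmod 4$, and \tref{th:trans-mult-4} gives $E_n(L)\equiv0\bmod 4$. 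In particular, the right-hand side of each of the four equations is even.

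Next I would take pairwise sums of these equations to isolate sums of pairs of the variables. Adding the first two equations gives $2(w+x)\equiv0\bmod 4$, so $w+x\equiv 0\bmod 2$, i.e.\ $w\equiv x\bmod 2$. Adding the first and third gives $2(w+y)\equiv0\bmod 4$, so $w\equiv y\bmod 2$. Adding the first and fourth gives $2(w+z)\equiv0\bmod 4$, so $w\equiv z\bmod 2$. Hence $w\equiv x\equiv y\equiv z\bmod 2$, which is exactly the claim.

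There is no real obstacle here: the corollary is essentially a bookkeeping consequence of information already produced in proving \tref{th:trans-mult-4}. The only thing to be careful about is that the three $E_n\ev$ values referenced are congruent to $0$ modulo $4$ for the \emph{same} reason (Theorem~\ref{th:det-mult-4}), which requires invoking that theorem for all three conjugates $L$, $L'$, $L''$ of singly-even order, and that the identification of $x,y,z$ with transversals of a particular parity type in each conjugate matches the sign pattern in equations \eref{eq:d0}, \eref{eq:d1} and \eref{eq:d2}.
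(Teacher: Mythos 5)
Your proposal is correct and follows essentially the same route as the paper: it defines $w,x,y,z$ via the four equations from the proof of \tref{th:trans-mult-4} and adds the first equation to each of the others to conclude $2(w+x)\equiv 2(w+y)\equiv 2(w+z)\equiv 0\bmod 4$ using \tref{th:det-mult-4} and \tref{th:trans-mult-4}. The only quibble is notational: the signed transversal count is denoted $E^{\pm}_n$ in the paper, whereas $E_n\ev$ is reserved for a different quantity (the even-permanent analogue in \sref{s:diags}), so you should write $E^{\pm}_n(L)$, $E^{\pm}_n(L')$, $E^{\pm}_n(L'')$.
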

 
\begin{proof}
  Define $w$, $x$, $y$ and $z$ as in \tref{th:trans-mult-4}. Adding
  $\eref{eq:trans}$ to $\eref{eq:d0}$ we find that 
  $2w+2x = E_n(L)+E^{\pm}_n(L) \equiv 0 \bmod 4$ (by \tref{th:det-mult-4} and
  \tref{th:trans-mult-4}) which gives us that $w \equiv x \bmod 2$. 
  Similarly, $\eref{eq:trans}+\eref{eq:d1}$ and
  $\eref{eq:trans}+\eref{eq:d2}$ tell us that $w \equiv y \bmod 2$ and
  $w \equiv z \bmod 2$, respectively.
\end{proof}

It is important to remark that \tref{th:trans-mult-4} is less general
in one respect than Balasubramanian's Theorem \cite{Bal90}. Balasubramanian
proved that the number of transversals in any row-Latin square of even
order is even (a {\em row-Latin square} of order $n$ is an $n\times n$
matrix in which each row is a permutation of $[n]$). 
\tref{th:trans-mult-4} does not generalise to row-Latin
squares.  Below we give two row-Latin squares whose number of
transversals is not a multiple of 4. The row-Latin square of order 2
has 2 transversals and the row-Latin square of order 6 has 6
transversals.

\begin{center}
  \begin{tikzpicture}
    \begin{scope}
      \matrix (A) [square matrix]{
        1 & 2 \\
        1 & 2 \\
      };
      \draw[ultra thick] (A-1-1.north west) rectangle (A-2-2.south east);
    \end{scope}
    
    \begin{scope}[xshift=3cm]
      \matrix (A) [square matrix]{
        1 & 3 & 6 & 2 & 5 & 4 \\
        2 & 1 & 5 & 6 & 4 & 3 \\
        3 & 2 & 4 & 1 & 5 & 6 \\
        4 & 2 & 1 & 5 & 6 & 3 \\
        5 & 2 & 3 & 6 & 1 & 4 \\
        6 & 5 & 2 & 3 & 4 & 1 \\
      };
      \draw[ultra thick] (A-1-1.north west) rectangle (A-6-6.south east);
    \end{scope}
  \end{tikzpicture}
\end{center}

Computational evidence suggests the following generalisation 
of \tref{th:trans-mult-4} and \cyref{cor:types}.

\begin{conjecture}\label{conj:even}
 Let $L$ be a Latin square of even order $n$. Let
 $w$, $x$, $y$ and $z$  be the number of transversals in $L$ of types
 $T^{000}$, $T^{011}$, $T^{101}$ and $T^{110}$, respectively. Then
 \begin{enumerate}[label={\rm(\alph*)}]
  \item $E_n(L) \equiv E^{\pm}_n(L) \bmod 4$ and
  \item $w\equiv x \equiv y \equiv z \bmod 2$.
 \end{enumerate}
\end{conjecture}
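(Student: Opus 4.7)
The plan is to prove (a) first, since (b) will follow from (a) applied to $L$ and its conjugates. Specifically, let $L'$ and $L''$ be the $312$- and $231$-conjugates of $L$; then identities \eref{eq:trans}--\eref{eq:d2} give
\begin{align*}
E_n(L) - E^{\pm}_n(L) &= 2(y+z),\\
E_n(L) - E^{\pm}_n(L') &= 2(x+z),\\
E_n(L) - E^{\pm}_n(L'') &= 2(x+y).
\end{align*}
If (a) holds for each of $L, L', L''$, then $y+z$, $x+z$, and $x+y$ are all even, hence $x \equiv y \equiv z \bmod 2$; combined with \tref{th:bala} this forces $w \equiv x \bmod 2$, proving (b).

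For (a), the case $n \equiv 2 \bmod 4$ is immediate from \tref{th:trans-mult-4} and \tref{th:det-mult-4}, which give $E_n(L) \equiv 0 \equiv E^{\pm}_n(L) \bmod 4$. The substantive case is $n \equiv 0 \bmod 4$. I would first try the pairing strategy of \tref{th:det-mult-4}, matching each subset $S \subseteq [n]$ with its complement in \eref{eq:signed-trans-incl-excl}, so that the evaluations of $L[X]$ on the two indicator vectors are complementary $(0,1)$-matrices $A$ and $J-A$. For $|S|$ even, \lref{lem:det-even-k-0-mod-4} still makes each term a multiple of $4$. For $|S|$ odd, however, \eref{e:compdet} only yields $\det(J-A) \equiv \det A \bmod 4$ when $n \equiv 0 \bmod 4$ (rather than $\det A + \det(J-A) \equiv 0 \bmod 4$ as in \lref{lem:sum-of-complement-0-mod-4}), so each complementary pair contributes $\pm 2\det A \bmod 4$. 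The argument therefore stalls at $E_n(L) \equiv E^{\pm}_n(L) \bmod 2$, which \tref{th:bala} already gives.

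The main obstacle is thus to extract a further factor of $2$. I would seek it in the Latin-square structure: the family of $(0,1)$-matrices obtained by evaluating $L[X]$ at different indicator vectors all come from a single square $L$, so their determinants satisfy relations beyond those in \cite{New78} for arbitrary members of $\Lambda_n^{|S|}$. One avenue is to expand the sum of $\det A$ (over the complementary pairs with $|S|$ odd) on $\sym_n$ and exhibit a sign-reversing involution on the contributing diagonals --- for example one induced by transposing two symbols of $L$, which should pair up odd diagonals of near-full weight with opposite signs. An alternative, more combinatorial line is to construct a fixed-point-free involution directly on the set of odd transversals of $L$: given an intercalate or small Latin trade interacting canonically with a transversal $T$, modify $T$ to produce another odd transversal $T'$. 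In either route, the crux is making such a construction canonical and uniform across all Latin squares of order $n \equiv 0 \bmod 4$, without appealing to unavailable symmetries of $L$; this is the step I expect to be hard, and is what keeps the conjecture open.
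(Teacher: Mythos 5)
This statement is presented in the paper as a conjecture, not a theorem: the authors offer no proof, only computational evidence, the verification for $n\equiv2\bmod4$, and Lemma~\ref{lm:conjecture-a-iff-b} showing that (a) and (b) are equivalent. Your proposal is therefore correctly calibrated rather than mistaken. Your reduction of (b) to (a) via the four identities \eref{eq:trans}--\eref{eq:d2} (giving $2(y+z)$, $2(x+z)$, $2(x+y)$ as the relevant differences, with $w\equiv x$ then forced by Theorem~\ref{th:bala}) is exactly the content of Lemma~\ref{lm:conjecture-a-iff-b}, and your disposal of the $n\equiv2\bmod4$ case via Theorems~\ref{th:det-mult-4} and~\ref{th:trans-mult-4} matches the remark following the conjecture in the paper. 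Your diagnosis of where the pairing argument breaks for $n\equiv0\bmod4$ is also accurate: with $n\equiv0\bmod4$ and $k$ odd, \eref{e:compdet} gives $\det(J-A)\equiv\det A\bmod4$, so each complementary pair contributes $2\det A$ rather than $0$ modulo $4$, and one only recovers the mod-$2$ statement of Theorem~\ref{th:bala}. The closing suggestions (sign-reversing involutions on odd diagonals, involutions built from intercalates or Latin trades) are speculative and do not constitute progress, but you are explicit about that. In short: you have not proved the statement, but no proof exists in the paper either, and your partial analysis reproduces everything the paper actually establishes about this conjecture.
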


\cjref{conj:even} is true for $n \equiv 2 \bmod 4$ since
$E_n(L) \equiv E^{\pm}_n(L) \equiv 0 \bmod 4$ 
(by \tref{th:det-mult-4} and \tref{th:trans-mult-4})
and $w\equiv x \equiv y \equiv z \bmod 2$ (by \cyref{cor:types}).
Note that there are many Latin squares of odd order for which
\cjref{conj:even} is not true. 

\begin{lemma}\label{lm:conjecture-a-iff-b}
 The conditions {\rm(a)} and {\rm(b)} 
 in \cjref{conj:even} are equivalent for Latin squares of even order.
\end{lemma}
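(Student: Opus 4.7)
The plan is to extract the equivalence from the same four linear equations \eref{eq:trans}, \eref{eq:d0}, \eref{eq:d1}, \eref{eq:d2} in the variables $w,x,y,z$ that were assembled in the proof of \tref{th:trans-mult-4}, combined with \tref{th:bala}.

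First, subtracting \eref{eq:d0} from \eref{eq:trans} gives $E_n(L)-E^\pm_n(L)=2(y+z)$. So condition (a) for $L$ is equivalent to the single parity statement $y\equiv z\bmod 2$, and in particular (b)$\Rightarrow$(a) is immediate.

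For the converse, I apply the same subtraction with \eref{eq:d1} and \eref{eq:d2} in place of \eref{eq:d0}, using that the $312$- and $231$-conjugates $L'$, $L''$ have the same number of transversals as $L$. This yields $E_n(L')-E^\pm_n(L')=2(x+z)$ and $E_n(L'')-E^\pm_n(L'')=2(x+y)$, so that condition (a) for $L'$ is the parity statement $x\equiv z\bmod 2$ and condition (a) for $L''$ is $x\equiv y\bmod 2$. Interpreting \cjref{conj:even} universally over Latin squares of even order (which is the natural reading, since $L'$ and $L''$ are themselves Latin squares of even order), these three instances of (a) combine to $x\equiv y\equiv z\bmod 2$. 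Finally, by \tref{th:bala} the total $E_n(L)=w+x+y+z$ is even; together with $x+y+z\equiv x\bmod 2$ this forces $w\equiv x\bmod 2$, giving (b).

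The computation is purely a piece of linear algebra over $\Z_2$ on four unknowns, and I do not anticipate any substantive obstacle. The only point worth stating carefully is the universal reading of the conjecture, which is what makes the conjugates $L'$ and $L''$ available on both sides of the equivalence; without that reading, knowing (a) for $L$ alone only delivers $y\equiv z\bmod 2$, which is strictly weaker than (b).
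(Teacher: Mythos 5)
Your proposal is correct and follows essentially the same route as the paper: the paper's (terse) proof also reduces condition (a) to a single parity relation among $w,x,y,z$ via \tref{th:bala} and then invokes the conjugate-based computation from the proof of \cyref{cor:types}, which likewise requires reading the conjecture universally so that (a) can be applied to $L'$ and $L''$. Your use of differences rather than sums of the four equations is an immaterial variation.
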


\begin{proof}
  By \tref{th:bala},
  we know that $E_n(L) \equiv 0 \bmod 2$ when $n$ is even,  showing 
  condition (a) is equivalent to $E_n(L)+E^{\pm}_n(L) \equiv 0
  \bmod 4$. We may use the same idea as the proof of
  \cyref{cor:types} to show the result.
\end{proof}

\section{\label{s:depleted}Transversals of depleted Latin squares}

In this section we give a number of results around the common theme of
transversals of \emph{depleted Latin squares}, that is, matrices formed
by removing a row and/or a column of a Latin square.
This depleted Latin square is a Latin array.

Given an $n \times n$ matrix $A$, we use $\nu(A)$ to denote the
$\Z_2$-nullity of $A$ and we use $A(i \mmid j)$ to denote the $(n-1)
\times (n-1)$ matrix obtained by deleting row $i$ and column $j$ from
$A$. We start by considering the permanent of this submatrix, which is
analogous to the consideration of minors when computing the
determinant.

\begin{theorem}\label{t:perminorsmod2}
  Let $A \in M_n(\Z)$ for $n > 1$. Then
  \begin{itemize}
  \item $\per A(i \mmid j) \equiv 0 \bmod 2$ for all $i,j$ if and only
    if $\nu(A)\ge 2$.
  \item $\per A(i \mmid j) \equiv 1 \bmod 2$ for all $i,j$ if and only
    if $\nu(A)=1$ and all row and column totals of $A$ are even.
  \end{itemize}
\end{theorem}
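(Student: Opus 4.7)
The plan is to translate the problem from permanents to determinants via the congruence $\per M\equiv\det M\bmod 2$, so that $\per A(i\mmid j)\equiv\det A(i\mmid j)\bmod 2$. The hypotheses then become statements about the entries of $\adj(A)$ modulo $2$, since $(\adj A)_{ji}=(-1)^{i+j}\det A(i\mmid j)$. I would work entirely over $\Z_2$ and exploit the identity $A\cdot\adj(A)=\adj(A)\cdot A=\det(A)\cdot I$.

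The first bullet is the easy one: $\per A(i\mmid j)\equiv 0\bmod 2$ for all $i,j$ says every $(n-1)\times(n-1)$ minor of $A$ vanishes modulo $2$, which is equivalent to the $\Z_2$-rank of $A$ being at most $n-2$, i.e. $\nu(A)\ge 2$. For the forward direction of the second bullet, suppose $\adj(A)\equiv J\bmod 2$. Some $(n-1)\times(n-1)$ minor is odd, so $\nu(A)\le 1$. The identity $A\cdot\adj(A)=\det(A)\cdot I$ gives $A\cdot J\equiv\det(A)\cdot I\bmod 2$; but all columns of $A\cdot J$ equal $A\mathbf{1}$, so for $n>1$ this forces $\det A$ to be even. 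Hence $\nu(A)=1$ and $A\mathbf{1}\equiv 0\bmod 2$, i.e.\ every row sum of $A$ is even. The transposed identity $\adj(A)\cdot A\equiv J\cdot A\equiv 0$ gives the same for columns.

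For the reverse direction of the second bullet, assume $\nu(A)=1$ with all row and column sums even. The row-sum hypothesis says $\mathbf{1}$ lies in the (one-dimensional) $\Z_2$-null space of $A$, so that null space is spanned by $\mathbf{1}$; similarly the left null space is spanned by $\mathbf{1}^T$. Since $\det A\equiv 0\bmod 2$, the identity $A\cdot\adj(A)\equiv 0$ forces each column of $\adj(A)\bmod 2$ to be either $\mathbf{0}$ or $\mathbf{1}$, and likewise each row is $\mathbf{0}^T$ or $\mathbf{1}^T$. A single nonzero entry of $\adj(A)$ then propagates through its row and column to fill every entry with $1$, so modulo $2$ the matrix $\adj(A)$ is either $0$ or $J$. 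The zero case is ruled out because it would force $\nu(A)\ge 2$ by the first bullet, so $\adj(A)\equiv J\bmod 2$, as desired.

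The main obstacle is this last structural step in the backward direction: converting the two-sided constraint $A\cdot\adj(A)\equiv 0\equiv\adj(A)\cdot A$ into the rigid conclusion that $\adj(A)\bmod 2$ is either $0$ or $J$. Everything else reduces cleanly to the standard rank/nullity correspondence over $\Z_2$ together with the cofactor identity.
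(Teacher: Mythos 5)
Your proposal is correct, and it reaches the conclusion by a genuinely different mechanism than the paper for the crucial step. Both arguments begin the same way, replacing $\per$ by $\det$ modulo $2$ and casing on $\nu(A)$, and the paper also invokes the adjugate once (to dispose of the $\nu(A)=0$ case in the first bullet). But for the second bullet the paper works entry-by-entry: it rules out odd row/column sums by Laplace expansion along such a line (which would produce a zero minor), and then proves $\det A(1\mmid1)\equiv\det A(2\mmid1)\bmod 2$ by an explicit row operation, namely adding rows $1$ and $2$, deleting column $1$, and observing that the resulting matrix has all even column sums and hence even determinant. You instead package everything into the identity $A\cdot\adj(A)=\adj(A)\cdot A=\det(A)\,I$ over $\Z_2$: the forward direction falls out because $A\cdot J$ is the matrix of repeated row sums and can only be diagonal (for $n>1$) if those sums vanish, and the backward direction follows because when $\nu(A)=1$ with $\mathbf{1}$ in both null spaces, every column of $\adj(A)$ lies in $\{\mathbf{0},\mathbf{1}\}$ and every row in $\{\mathbf{0}^T,\mathbf{1}^T\}$, which rigidly forces $\adj(A)\equiv 0$ or $J$, with the zero case excluded by the first bullet. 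Your route is more conceptual and makes the structural statement ``$\adj(A)\bmod 2$ is $0$ or $J$'' explicit, at the cost of relying on the cofactor identity over $\Z_2$; the paper's route is more elementary and self-contained, needing only \lref{lm:even-perm} and basic determinant expansions. Both are complete and correct.
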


\begin{proof}
  It suffices to show analogous properties for determinants since the
  determinant and permanent agree modulo $2$. All calculations 
  in this proof will be working over $\Z_2$, and all minors will be
  of order $n-1$.
  
  If $\nu(A)\ge2$, then for all $i,j$ we know that
  $\nu(A(i \mmid j))\ge1$ so $\det A(i \mmid j) \equiv 0\bmod 2$.
  
  If $\nu(A)=0$, then $A$ has an inverse so the adjugate $\adj(A)$ has
  full rank and hence is not a multiple of $J$ (given that
  $n>1$). Hence not all minors of $A$ are equal.
  
  So suppose that $\nu(A)=1$, and hence $\det A=0$. Since $\nu(A)=1$
  there is at least one minor of $A$ that equals $1$.  
  
  If there is any row or column of $A$ with odd sum, then expanding
  the determinant in that row/column shows that $A$ has at least one
  minor which is zero, and hence not all minors are equal.
  
  It remains to treat the case where each row and column sum of $A$ is even.
  It suffices to show $\det  A(1 \mmid 1) \equiv \det  A(2 \mmid 1) \bmod2$. 
  But
  \begin{align*}
    \det A(1 \mmid 1)+\det A(2 \mmid 1)
    &=\det\left(
    \begin{array}{cccc}
      a_{12}+a_{22} & a_{13}+a_{23} & \cdots & a_{1n}+a_{2n}\\
      a_{32}       & a_{33}       & \cdots & a_{3n}\\
      a_{42}       & a_{43}       & \cdots & a_{4n}\\
      \vdots      & \vdots      & \ddots &\vdots\\
      a_{n2}       & a_{n3}       & \cdots & a_{nn}\\
    \end{array}
    \right) 
    \equiv 0 \bmod 2,
  \end{align*}
  since all column sums are even (cf. \lref{lm:even-perm}).
\end{proof}

Since $\nu(A)\ge1$ whenever all row totals are even, we have:

\begin{coro}\label{cy:perminorsmod2}
  Let $A \in M_n(\Z)$ be such that all row and
  column sums are even.  Then 
  $\per A(a \mmid c) \equiv \per A(b \mmid d) \bmod 2$ for all $a,b,c,d$.
\end{coro}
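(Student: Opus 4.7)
The plan is to derive this corollary directly from \tref{t:perminorsmod2} by observing that the hypothesis forces one of the two bulleted cases of that theorem to hold, and both cases conclude that all the permanent minors of order $n-1$ are congruent to each other modulo $2$.

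First I would note that \lref{lm:even-perm}, applied to $A$ (whose row sums are all even by hypothesis), gives $\det A \equiv 0 \bmod 2$, so $\nu(A) \geq 1$ over $\Z_2$. There are now only two possibilities: either $\nu(A) \geq 2$, or $\nu(A) = 1$.

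In the first case, the first bullet of \tref{t:perminorsmod2} gives $\per A(i \mmid j) \equiv 0 \bmod 2$ for every choice of $(i,j)$, so in particular $\per A(a \mmid c) \equiv 0 \equiv \per A(b \mmid d) \bmod 2$. In the second case, since we have assumed that all row and column sums of $A$ are even, the hypotheses of the second bullet of \tref{t:perminorsmod2} are met, so $\per A(i \mmid j) \equiv 1 \bmod 2$ for every $(i,j)$, and again $\per A(a \mmid c) \equiv \per A(b \mmid d) \bmod 2$.

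There is no real obstacle here, since \tref{t:perminorsmod2} already does all the work; the only thing to check is that the evenness of the row sums alone is enough to guarantee $\nu(A) \geq 1$, which is exactly \lref{lm:even-perm}. The role of the column-sum hypothesis is to ensure that if we fall into the $\nu(A) = 1$ case, we land in the second bullet rather than in the ``not all minors equal'' situation forced by an odd row or column sum.
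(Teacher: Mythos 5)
Your proposal is correct and is exactly the paper's argument: the authors dispose of the corollary with the single remark that $\nu(A)\ge1$ whenever all row totals are even, leaving the reader to note (as you spell out) that the two bullets of \tref{t:perminorsmod2} then cover the cases $\nu(A)\ge2$ and $\nu(A)=1$ and each forces all the minors to agree modulo $2$. Your version just makes the case split and the role of the column-sum hypothesis explicit.
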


The previous result gave a congruence mod 2. Our next results
involve congruences mod 4.

\begin{theorem}\label{t:perminorsquad}
  Suppose that $n$ is odd and $k\equiv2\bmod4$. If $A\in\Lambda_n^k$,
  then
$$\per A(a \mmid c)+\per A(b \mmid c)+\per A(a \mmid d)+\per A(b \mmid d) \equiv 0 \bmod 4$$
  for any $a,b,c,d$.
\end{theorem}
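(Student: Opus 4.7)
My plan is to convert the four-term sum into a single permanent via a bordered-matrix construction, and then reduce modulo~$4$ by a parity case-split followed by a Ryser expansion with a pairing argument.

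I would first verify the identity
$$\per A(a\mmid c)+\per A(b\mmid c)+\per A(a\mmid d)+\per A(b\mmid d) = \per\tilde A,$$
where $\tilde A$ is the $(n{+}1)\times(n{+}1)$ matrix obtained from $A$ by adjoining a new rightmost column whose only $1$'s are in rows $a$ and $b$, and a new bottom row whose only $1$'s are in columns $c$ and $d$ (with a $0$ in the new corner). This follows by Laplace-expanding $\per\tilde A$ first along its new bottom row (whose $1$'s are at columns $c,d$) and then along its new rightmost column (whose $1$'s are at rows $a,b$). Thus it suffices to prove $\per\tilde A\equiv 0\bmod 4$.

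Because $k$ is even, $A$ has all row and column sums even, so \cyref{cy:perminorsmod2} forces the four permanental minors to share a common parity $p$. If $p=1$ the sum is already $\equiv 4\equiv 0\bmod 4$, so we may assume $p=0$; by \tref{t:perminorsmod2} this corresponds to $\nu(A)\ge 2$ over $\Z_2$, and a genuine mod-$4$ argument is required. For this I would apply Ryser's formula \eref{eq:rysers-formula} to $\per\tilde A$ and pair each subset $S\subseteq[n+1]$ with its complement $\bar S:=[n+1]\setminus S$. Since $n+1$ is even, the Ryser signs at $S$ and $\bar S$ coincide, so each pair contributes $(-1)^{n+1-|S|}\bigl(\prod_i R_i(S)+\prod_i R_i(\bar S)\bigr)$, where $R_i(S)$ is the sum of the entries in row $i$ of $\tilde A$ over the columns in $S$. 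The full row sums of $\tilde A$ are $k$ outside $\{a,b,n+1\}$, $k+1$ in rows $a,b$, and $2$ in row $n+1$; with $k\equiv 2\bmod 4$, complementation sends $R_i(S)$ to $2-R_i(S)$, $3-R_i(S)$, or $2-R_i(S)$ respectively, modulo~$4$. The plan is then to isolate the factors contributed by rows $a$, $b$ and $n+1$ in each paired product, and to handle the remaining product (a permanent-like sum over a regular $(0,1)$-submatrix of $A$) by \lref{lem:det-even-k-0-mod-4} and \lref{lem:sum-of-complement-0-mod-4}, exactly in the spirit of the proof of \tref{th:det-mult-4}.

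The main obstacle is precisely this last step: $\tilde A$ is not a regular $(0,1)$-matrix, so the divisibility lemmas cannot be invoked on $\tilde A$ directly, and the three irregular rows $a$, $b$, $n+1$ (together with the corresponding columns $c$, $d$, $n+1$) must be peeled off before the regular machinery on $A$ is brought to bear. A useful sanity check and organizing device is the $(n+2)\times(n+2)$ even-regular augmentation
$$\tilde A^+ := \begin{pmatrix} A & e_a+e_b & e_a+e_b \\ (e_c+e_d)^T & 0 & 0 \\ (e_c+e_d)^T & 0 & 0 \end{pmatrix},$$
which satisfies $\per\tilde A^+ = 4\per A(\{a,b\}\mmid\{c,d\})$ by a direct double Laplace expansion, so $\per\tilde A^+\equiv 0\bmod 4$ for free; moreover \cyref{cy:perminorsmod2} applies to $\tilde A^+$ and, combined with the identity $\per\tilde A^+(n+2\mmid n+1)=\per\tilde A$, recovers the (already known) mod-$2$ vanishing of $\per\tilde A$. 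Upgrading this mod-$2$ information to a mod-$4$ identity is where the hypotheses ``$n$ odd'' and ``$k\equiv 2\bmod 4$'' are used in full strength, and is the technical heart of the argument.
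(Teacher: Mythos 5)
Your reduction of the four-term sum to $\per\tilde A$ is correct, and your overall strategy (combine the minors into a single permanent, apply Ryser's formula, pair each subset with its complement) is the same one the paper uses. But there are two genuine problems. First, the case split on the common parity $p$ of the four minors is flawed: if $p=1$ each minor is odd, but four odd numbers need not sum to $0\bmod 4$ (e.g.\ $1+1+1+3=6$); parity only controls the sum modulo $2$, so the case $p=1$ is not disposed of and needs the same mod-$4$ analysis as $p=0$. Second, and more seriously, the step you yourself flag as ``the technical heart'' is not carried out, and the tools you propose for it do not apply: in a Ryser expansion the paired contributions are products of partial row sums $\prod_i R_i(S)$, not determinants of regular $(0,1)$-matrices, so \lref{lem:det-even-k-0-mod-4} and \lref{lem:sum-of-complement-0-mod-4} cannot be invoked on them. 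Moreover your bordered matrix $\tilde A$ has two rows (and two columns) of odd sum $k+1$, which is exactly what makes the complement pairing intractable: expanding $\prod_i\bigl(\rho_i-R_i(S)\bigr)$ modulo $4$ leaves surviving cross terms involving $\rho_a\rho_b=(k+1)^2$ and its products with single even row sums, and the argument does not close as stated.

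The paper combines the four minors differently: by multilinearity of the permanent it forms an $(n-1)\times(n-1)$ matrix $B$ in which row $b$ is added to row $a$ and column $d$ to column $c$, so that $\per B$ equals the four-term sum while \emph{every} row and column sum of $B$ is even ($2k$ for the merged line, $k$ for the rest). With all row sums even, the paired Ryser contribution reduces cleanly to $(-1)^{|S_0|}\bigl(2\prod_i x_i - k\sum_{j}\prod_{i\ne j}x_i\bigr)\bmod 4$, and the hypotheses enter exactly where you expect: evenness of the column sums forces an even number of even $x_i$, and when all $x_i$ are odd the remaining sum has $n-2$ (an odd number of) odd summands, so $k\equiv2\bmod4$ cancels it against the $2\prod_i x_i$ term. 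If you want to salvage your bordered construction you must first clear the odd row and column sums, for instance by merging rows $a,b$ with your new row and columns $c,d$ with your new column --- at which point you have essentially reconstructed the paper's matrix $B$.
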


\begin{proof}
  If $a = b$ (or symmetrically, $c=d$), then
  $2\big(\per A(a \mmid c) + \per A(a\mmid d)\big) \equiv 0 \bmod 4$, by
  \cyref{cy:perminorsmod2}. Hence, it suffices to consider the case when
  $a=c=1$ and $b=d=2$.  Define
  \[B=
  \left(
  \begin{array}{cccc}
    a_{11}+a_{21}+a_{12}+a_{22} & a_{13}+a_{23} & \cdots & a_{1n}+a_{2n}\\
    a_{31}       +a_{32}        & a_{33}        & \cdots & a_{3n}\\
    a_{41}       +a_{42}        & a_{43}        & \cdots & a_{4n}\\
    \vdots                      & \vdots        & \ddots & \vdots\\
    a_{n1}       +a_{n2}        & a_{n3}        & \cdots & a_{nn}\\
  \end{array}
  \right).
  \]
  Note that $B$ has order $n-1$ and that its first
  row and column each sum to $2k\equiv0\bmod4$, while its other rows and columns
  each sum to $k$. Also, by multilinearity of the permanent,
  \begin{align*}
    \per B &=\per\left(
    \begin{array}{cccc}
      a_{12}+a_{22} & a_{13}+a_{23} & \cdots & a_{1n}+a_{2n}\\
      a_{32}        & a_{33}        & \cdots & a_{3n}\\
      a_{42}        & a_{43}        & \cdots & a_{4n}\\
      \vdots        & \vdots        & \ddots & \vdots\\
      a_{n2}        & a_{n3}        & \cdots & a_{nn}\\
    \end{array}
    \right)
+
    \per\left(
    \begin{array}{cccc}
      a_{11}+a_{21} & a_{13}+a_{23} & \cdots & a_{1n}+a_{2n}\\
      a_{31}        & a_{33}        & \cdots & a_{3n}\\
      a_{41}        & a_{43}        & \cdots & a_{4n}\\
      \vdots        & \vdots        & \ddots & \vdots\\
      a_{n1}        & a_{n3}        & \cdots & a_{nn}\\
    \end{array}
    \right)\\
    &=\per A(1 \mmid 1) + \per A(2 \mmid 1)+\per A(1 \mmid 2)+\per A(2 \mmid 2).
  \end{align*}
  Next, apply \eref{eq:rysers-formula} to calculate $\per B$:
  \[
  \per B = \sum_{S \subseteq[n-1]} (-1)^{n-1-|S|} \prod_{i=1}^{n-1}\sum_{j \in S}b_{ij}.
  \]
  Fix a set $S_0$ and consider the terms corresponding to $S_0$ and
  its complement in the outer summation. We have, 
  \begin{align}
    (-1)^{n-1-|S_0|} \prod_{i=1}^{n-1}&\sum_{j \in S_0}b_{ij}
    + (-1)^{|S_0|}\prod_{i=1}^{n-1}\sum_{j \not\in S_0}b_{ij} \nonumber\\
    &=
    (-1)^{|S_0|}
    \left(\prod_{i=1}^{n-1}x_i
    + (2k-x_1) \prod_{i=2}^{n-1}\left(k-x_i\right)\right)\nonumber\\
    &\equiv (-1)^{|S_0|} \left( 2 \prod_{i=1}^{n-1}x_i
    -k\sum_{j=2}^{n-1}\prod_{i\ne j}x_i \right)
    \bmod 4,\label{e:thibit}
  \end{align}
  where
  \[
  x_i=\sum_{j \in S_0}b_{ij}.
  \]
  Now $\sum_i x_i$ is even, so there is an even number of choices of $i$
  for which $x_i$ is even.
  If this number is non-zero, then \eref{e:thibit} is clearly 0 modulo 4.
  So we may assume that every $x_i$ is odd. But then
  \eref{e:thibit} is 0 modulo 4 again, since each term in the sum is odd
  and there is an odd number of summands.
\end{proof}

\begin{theorem}\label{tm:odd-2mod4-complement-permanent}
Let $n \equiv 1 \bmod 2$ and $k \equiv 2 \bmod 4$. If $A\in\Lambda_n^k$,
then $$\per A+2\per(J-A)\equiv 0 \bmod 4.$$
\end{theorem}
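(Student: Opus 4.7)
The plan is to apply Ryser's formula \eref{eq:rysers-formula} to $\per A+2\per(J-A)$ and analyse the resulting sum modulo $4$ by pairing each $S\subseteq[n]$ with its complement $[n]\setminus S$. Because $n$ is odd, $S$ and $[n]\setminus S$ are always distinct, so this partitions the $2^n$ subsets into $2^{n-1}$ pairs. Writing $x_i(S)=\sum_{j\in S}a_{ij}$ and $t=|S|$,
\[
\per A+2\per(J-A)=\sum_{S\subseteq[n]}(-1)^{n-t}\Bigl[\prod_i x_i(S)+2\prod_i\bigl(t-x_i(S)\bigr)\Bigr].
\]

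First I would compute each pair's contribution modulo $4$. Since $k^2\equiv0\bmod 4$, expanding $\prod_i(k-x_i)=\sum_T(-1)^{|T|}k^{n-|T|}\prod_{i\in T}x_i$ retains only the top two terms modulo $4$, and (using $n$ odd) this gives
\[
\prod_i(k-x_i)-\prod_i x_i\equiv 2\Bigl[\prod_i x_i+\sum_j\prod_{i\ne j}x_i\Bigr]\bmod 4.
\]
The piece that appears multiplied by $2$ need only be computed modulo $2$: setting $u_i:=(t+x_i)\bmod 2$, one checks that $\prod_i(n-t-k+x_i)-\prod_i(t-x_i)$ reduces modulo $2$ to the indicator that all $u_i$ take the same value (using that $n-k$ is odd, so $n-t-k+x_i\equiv 1+u_i\bmod 2$). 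Combining, each pair $\{S,[n]\setminus S\}$ contributes $2b(S)\bmod 4$, where $b(S)\bmod 2$ depends only on the residues $z_i:=x_i(S)\bmod 2$.

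Next I would show, by a short case analysis on $e(S):=\#\{i:z_i=0\}$, that $b(S)$ is odd precisely when $e(S)\in\{0,1,n\}$. The three contributions behave as follows modulo $2$: $\prod_i z_i$ is odd iff $e=0$; $\sum_j\prod_{i\ne j}z_i$ is odd iff $e\le 1$ (noting that when $e=0$ this sum equals $n$, which is odd); and the indicator that all $u_i$ are equal equals $1$ iff $e\in\{0,n\}$. Adding these three parities yields the claimed characterisation.

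The main step is to show $\#\{S:e(S)\in\{0,1,n\}\}$ is divisible by $4$. Let $A^*$ be $A$ reduced modulo $2$; the condition becomes that $A^*\mathbf{1}_S$ has Hamming weight in $\{0,n-1,n\}$. Since row and column sums of $A$ are $k\equiv0\bmod 2$, we have $\mathbf{1}\in\ker A^*$ and $\mathbf{1}^T A^*=0$, so $\nu:=\nu(A^*)\ge1$ and every vector in $\mathrm{im}(A^*)$ has even weight; in particular weight $n$ (odd) is unattainable. Each fibre of $A^*$ has size $2^\nu$, so the count equals $2^\nu(1+M)$, where $M$ is the number of weight-$(n-1)$ vectors in $\mathrm{im}(A^*)$. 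If $\nu\ge 2$, this is immediately a multiple of $4$. If $\nu=1$, then $\mathrm{im}(A^*)$ and the even-weight hyperplane both have codimension $1$, so they coincide; since $n-1$ is even, all $n$ weight-$(n-1)$ vectors lie in $\mathrm{im}(A^*)$, giving $M=n$ and total $2(n+1)$, divisible by $4$ because $n$ is odd.

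The main obstacle is the careful bookkeeping between modulo-$2$ and modulo-$4$ information in analysing the pair contributions, together with the structural identification in the $\nu(A^*)=1$ case of $\mathrm{im}(A^*)$ with the even-weight hyperplane of $\Z_2^n$, which is what forces $M=n$. Once the count of ``bad'' subsets is divisible by $4$, the number of bad pairs is even, so the total pair contribution is a multiple of $4$, proving the theorem.
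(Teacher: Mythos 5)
Your proof is correct, but it takes a genuinely different route from the paper's. The paper first converts $2\per(J-A)$ into $2\tau_{n-1}(A)$ (the sum of all permanental minors) via inclusion--exclusion, and then leans heavily on \tref{t:perminorsquad} and \cyref{cy:perminorsmod2} to establish a block structure modulo $4$ for the matrix $C=[\per A(i\mmid j)]$; a Laplace-style expansion of $\per A$ along a row then closes the loop. You instead attack the sum directly with Ryser's formula and complementary pairing --- the same device the paper uses inside the proofs of \tref{t:perminorsquad} and \tref{th:odd-reg-01-per-mult-4}, but here applied to the combined quantity $\per A+2\per(J-A)$. Your mod-$4$ and mod-$2$ bookkeeping for each pair checks out (the expansion of $\prod_i(k-x_i)$ keeping only the $k^0$ and $k^1$ terms, the reduction of the $J-A$ piece to the indicator that all $x_i$ agree mod $2$, and the case analysis showing $b(S)$ is odd exactly when $e(S)\in\{0,1,n\}$ are all correct; note that the column-sum condition actually forces $e(S)$ odd, so $e(S)=0$ never occurs, but this does no harm). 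The final counting step is the genuinely new ingredient: identifying the bad subsets with the fibres of $v\mapsto A^*v$ over the weight-$0$ and weight-$(n-1)$ vectors, and using $2^{\nu}\mid$ fibre sizes together with the coincidence of $\mathrm{im}(A^*)$ with the even-weight hyperplane when $\nu=1$ (giving $M=n$ and a total of $2(n+1)\equiv0\bmod4$). What your approach buys is self-containedness --- it does not require \tref{t:perminorsquad} or the structure of the minor matrix $C$ --- at the cost of somewhat heavier elementary bookkeeping; the paper's proof is shorter given its earlier results but cannot stand alone.
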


\begin{proof}
By inclusion-exclusion,
\[
\per(J-A)=\sum_{i=0}^n(-1)^i(n-i)!\,\tau_i(A)
\equiv \tau_{n-1}(A)-\per A\bmod 2,
\]
where $\tau_i(A)$ is the sum of the permanents of all $i \times i$
submatrices of $A$.
However, $\per A$ is even by \lref{lm:even-perm}, so
$\per A+2\per(J-A)\equiv \per A+2\tau_{n-1}(A)\bmod4$.
Next, define an $n\times n$ matrix $C = [c_{ij}]$ by 
$c_{ij}=\per A(i\mmid j)$.
By \cyref{cy:perminorsmod2} and \tref{t:perminorsquad}, we know that
modulo $4$ each pair of rows of $C$ either agrees in every position
or differs by 2 in every position. Hence,
up to row and column permutations, $C$ has the block form
\[
\left(
\begin{array}{cc}
C_1&C_2\\
C_3&C_4\\
\end{array}
\right)
\]
where each entry in $C_2\cup C_3$ differs from each entry in $C_1\cup
C_4$ by 2 mod 4. Note that some blocks may be vacuous, but one of the
four blocks must have odd dimensions.  Without loss of generality, we
choose it to be $C_1$.  Now, partition $A$ into $4$ blocks
\[
\left(
\begin{array}{cc}
A_1&A_2\\
A_3&A_4\\
\end{array}
\right)
\]
whose dimensions and locations match the corresponding block of $C$.
Define $n_{r}$ to be the total of a row $r$ in block $A_1$.
Next, consider calculating $\per A \bmod 4$ by taking an expansion along
row $r$:
\begin{equation*}\label{e:invper}
\per A = \sum_{j=1}^{n} a_{rj}c_{rj} 
\equiv n_{r}c_{r1}+(k-n_{r})(c_{r1}+2)\equiv 2c_{r1}-2n_{r}
\bmod 4.
\end{equation*}
The answer must be independent of $r$, which means that $n_{r}\bmod 2$
is constant. Analogous statements hold for row totals in each
block. In particular, $A_3$, which has an even number of rows, must contain
an even number of ones. But then $A_1$ must also contain an even number of
ones, given that the column totals of $A$ are even. It follows that 
$n_r$ must be even, so $\per A\equiv 2c_{r1}\bmod 4$. Now,
\[
\tau_{n-1}(A)=\sum_{i,j}c_{ij}\equiv n^2c_{r1}\equiv c_{r1} \bmod 2.
\]
So $\per A+2\tau_{n-1}(A)\equiv4c_{r1}\equiv0\bmod4$ and we are done.
\end{proof}

In our next major result, we show that a stronger form of
\lref{lm:even-perm} can be obtained under some circumstances.

\begin{theorem}\label{th:odd-reg-01-per-mult-4}
Let $A \in M_n(\Z)$ where $n$ is odd. If all row sums are multiples 
of $4$ and all column sums are even, then $\per A \equiv 0\bmod 4$.
\end{theorem}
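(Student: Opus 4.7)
The plan is to apply Ryser's formula \eref{eq:rysers-formula} and pair complementary subsets, following the strategy already used in \tref{t:perminorsquad}. Since $n$ is odd, each $S \subseteq [n]$ is distinct from its complement $S^c$, so Ryser's sum splits into $2^{n-1}$ pairs of terms indexed by unordered pairs $\{S,S^c\}$.

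For such a pair set $x_i = \sum_{j \in S} a_{ij}$ and $y_i = \sum_{j \in S^c} a_{ij}$, so that $x_i + y_i$ equals the $i$th row sum of $A$. The hypothesis $x_i + y_i \equiv 0 \bmod 4$ gives $y_i \equiv -x_i \bmod 4$, and since congruence modulo $4$ is preserved under products this upgrades to
$$\prod_{i=1}^{n} y_i \equiv (-1)^n \prod_{i=1}^{n} x_i \equiv -\prod_{i=1}^{n} x_i \bmod 4.$$
Because $n$ is odd, the Ryser signs satisfy $(-1)^{n-|S|} = -(-1)^{|S|}$ and $(-1)^{n-|S^c|} = (-1)^{|S|}$, so the combined contribution of $S$ and $S^c$ to $\per A$ is
$$(-1)^{|S|}\Bigl(\prod_{i=1}^n y_i - \prod_{i=1}^n x_i\Bigr) \equiv -2(-1)^{|S|} \prod_{i=1}^{n} x_i \bmod 4.$$

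It will therefore suffice to show that $\prod_{i=1}^n x_i$ is always even. Since every column sum of $A$ is even,
$$\sum_{i=1}^n x_i = \sum_{j \in S} \sum_{i=1}^n a_{ij}$$
is a sum of even integers and hence even, so an even number of the $x_i$ are odd. Because $n$ itself is odd, not all of the $x_i$ can be odd, so at least one $x_i$ is even and the product is even. Consequently every complementary pair contributes $0 \bmod 4$, giving $\per A \equiv 0 \bmod 4$. The one substantive step is this parity-of-product argument, which makes essential use of both the odd order of $A$ and the even-column-sum hypothesis; the remainder is routine sign bookkeeping in the style of \tref{t:perminorsquad}.
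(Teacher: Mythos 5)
Your proof is correct and is essentially the same as the paper's: both apply Ryser's formula, pair each $S$ with its complement, use the row-sum hypothesis to reduce the pair's contribution to $\pm2\prod_i x_i \bmod 4$, and then use the even column sums together with the oddness of $n$ to force some $x_i$ to be even. The only cosmetic difference is that the paper normalises by choosing the odd-cardinality representative of each pair, whereas you carry the sign $(-1)^{|S|}$ explicitly.
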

  
\begin{proof}
  We compute the permanent via \eref{eq:rysers-formula}:
  \begin{equation*}
    \per A = \sum_{S \subseteq[n]} (-1)^{n-|S|} \prod_{i=1}^{n}\sum_{j \in S}a_{ij}.
  \end{equation*}
  Let $r_i$ be the sum of row $i$ and $c_j$ be the sum of column
  $j$ of $A$. Fix a set $S=S_0$ of odd cardinality 
  and consider the contribution from $S_0$
  and its complement. We have,
  \begin{align}
    \prod_{i=1}^{n}\sum_{j \in S_0}a_{ij} - \prod_{i=1}^{n}\sum_{j \not\in S_0}a_{ij} 
    &= 
    \prod_{i=1}^{n}\sum_{j \in S_0}a_{ij} - \prod_{i=1}^{n}\left(r_i-\sum_{j \in S_0}a_{ij}\right)\nonumber\\
    &\equiv 2 \prod_{i=1}^{n}\sum_{j \in S_0}a_{ij}\bmod 4.\label{eq:twice-product}
  \end{align} 
  Since each column of $A$ has an even total,
  \begin{equation}\label{eq:col-sum}
    \sum_{i=1}^{n}\sum_{j \in S_0} a_{ij} = \sum_{j\in S_0}c_j \equiv 0 \bmod 2.
  \end{equation}
  Since $n$ is odd, $\sum_{j \in S_0} a_{ij}$ must be even for at
  least one value of $i$ in \eref{eq:col-sum}. Thus, the product of
  the partial row sums must be even and \eref{eq:twice-product} must
  be a multiple of 4. Summing over $S_0$, the result follows.
\end{proof}

\begin{coro}\label{cor:odd-n-lambda-4k-mult-4}
  Let $A \in \Lambda^{4k}_n$ for integers $k,n$ with $n$ odd. Then $\per A
  \equiv 0 \bmod 4$.
\end{coro}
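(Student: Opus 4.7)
The plan is to observe that this corollary is an immediate specialisation of \tref{th:odd-reg-01-per-mult-4}. If $A \in \Lambda^{4k}_n$, then by definition every row and every column of $A$ sums to $4k$. In particular, each row sum is a multiple of $4$ and each column sum is even (being a multiple of $4$, hence certainly a multiple of $2$). Since $n$ is odd by hypothesis, the hypotheses of \tref{th:odd-reg-01-per-mult-4} are met, and we conclude $\per A \equiv 0 \bmod 4$.

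There is no real obstacle here; the substantive work was done in proving \tref{th:odd-reg-01-per-mult-4}, where the Ryser formula was paired against complementary subsets and the parity of $n$ was used to force an even partial row sum. The corollary merely packages the common case where $A$ is a $(0,1)$-matrix with constant line sum divisible by $4$. Accordingly, the written proof should be a single sentence that checks the two hypotheses (row sums divisible by $4$; column sums even) and cites \tref{th:odd-reg-01-per-mult-4}.
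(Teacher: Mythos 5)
Your proposal is correct and matches the paper exactly: the corollary is stated without proof precisely because it is the immediate specialisation of \tref{th:odd-reg-01-per-mult-4} you describe, with row sums $4k\equiv0\bmod4$ and column sums $4k$ even. Nothing further is needed.
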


It is well-known that perfect matchings in bipartite graphs can be
counted using the permanent of the bi-adjacency matrix of the
graph. \cyref{cor:odd-n-lambda-4k-mult-4} says that the number of
perfect matchings will be a multiple of $4$ in any $4k$-regular
bipartite graph with an odd number of vertices in each class of the
bipartition. Indeed, \tref{th:odd-reg-01-per-mult-4} says that the
same conclusion can be reached under weaker hypotheses. It suffices
for all vertices in one class to have even degree, and all
vertices in the other class to have their degree divisible by 4.

\medskip

We define $t_{ij}(L)$ to be the number of transversals in the Latin
array formed by deleting the $i$th row and $j$th column of $L$. When
clear from context, the shorthand $t_{ij}$ is used.

\begin{theorem}\label{th:same-delta-row}
  Let $L$ be a row-Latin square of order $n$. Then for all
  $a,b,c$, $$t_{ab} \equiv t_{ac} \bmod 2.$$
\end{theorem}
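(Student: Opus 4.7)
The plan is to apply \lref{lem:trick} with $m=n-1$ and $f=\per L[X](a\mmid j)$, which gives
\[
t_{aj}=\sum_{r=0}^{n-1}(-1)^{n-1-r}(n-r)\langle r\rangle\per L[X](a\mmid j).
\]
Since permanent and determinant agree modulo~$2$, it suffices to prove that
\[
\sum_{r=0}^{n-1}(n-r)\langle r\rangle\bigl(\det L[X](a\mmid b)+\det L[X](a\mmid c)\bigr)\equiv 0\bmod 2.
\]

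To handle both determinants simultaneously I would package them into a single matrix via multilinearity. Let $Q$ be the $(n-1)\times(n-1)$ matrix with rows indexed by $[n]\setminus\{a\}$, whose $n-2$ ``ordinary'' columns are those of $L[X]$ indexed by $[n]\setminus\{b,c\}$, and whose single ``merged'' column has entry $x_{L(i,b)}+x_{L(i,c)}$ in row~$i$. Expanding by linearity in the merged column immediately gives $\det Q\equiv\det L[X](a\mmid b)+\det L[X](a\mmid c)\bmod 2$. The crucial structural observation, and the only place the row-Latin hypothesis enters, is that each row of $Q$ sums to $x_1+x_2+\cdots+x_n$ as a polynomial: in row~$i$ the merged column contributes $x_{L(i,b)}+x_{L(i,c)}$ and the ordinary columns supply the remaining $n-2$ variables, because row~$i$ of $L$ is a permutation of $[n]$. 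Thus, for any $S\subseteq[n]$, evaluating at $x_s=[s\in S]$ produces a matrix $Q|_S$ (with entries in $\{0,1,2\}$) whose every row has sum $|S|$, and \lref{lm:even-perm} then forces $\det Q|_S\equiv 0\bmod 2$ whenever $|S|$ is even.

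It remains to treat the contributions from $|S|$ odd. When $n$ is odd, the coefficient $(n-|S|)$ is even for every odd $|S|$, so these terms die modulo~$2$ and we are finished. When $n$ is even, the coefficient is odd and we need $\sum_{|S|\text{ odd}}\det Q|_S\equiv 0\bmod 2$. For this I would use a short degree-count finish: $\det Q$ is a polynomial of total degree at most $n-1$, so no monomial of $\det Q$ involves all $n$ variables; a monomial using $k<n$ distinct variables is equal to $1$ on $2^{n-k}\geq 2$ of the vectors in $\{0,1\}^n$, so its contribution to $\sum_{S\subseteq[n]}\det Q|_S$ is even. Hence $\sum_{S\subseteq[n]}\det Q|_S\equiv 0\bmod 2$, and combining with the vanishing of the even-$|S|$ terms already established gives $\sum_{|S|\text{ odd}}\det Q|_S\equiv 0\bmod 2$, completing the argument. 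The main obstacle is spotting the merged-column construction and recognising that the row-Latin hypothesis forces the row-sum polynomial of $Q$ to be exactly $\sum_s x_s$; once this structural fact is in hand, the rest is a routine application of \lref{lm:even-perm} and the elementary degree-bound observation.
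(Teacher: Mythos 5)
Your proof is correct, and while it shares the paper's overall skeleton---encode $t_{ab}+t_{ac}$ as the weight-$(n-1)$ coefficient sum of a single auxiliary determinant, extract that sum with \lref{lem:trick}, pass to determinants mod~2, and kill the even-$r$ terms with \lref{lm:even-perm}---it diverges in two genuine ways. First, the auxiliary matrix: the paper works with an $n\times n$ matrix $L[X]'$ obtained by replacing row $a$ of $L[X]$ with the $(0,1)$-indicator row of $\{b,c\}$ and expanding the permanent along that row, whereas you drop to an $(n-1)\times(n-1)$ matrix $Q$ with a merged column; the two constructions encode the same polynomial mod~2, but yours has the mild advantage that \emph{every} row sum of $Q$ equals $\sum_s x_s$, with no exceptional row to track. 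Second, and more substantively, the odd-$r$ terms when $n$ is even: the paper pairs each evaluation set $S$ with its complement and invokes \lref{lem:complement-det}, while you instead note that $\sum_{S\subseteq[n]}\det(Q|_S)$ is even because every monomial of $\det Q$ has total degree $n-1$ and so omits at least one variable, hence is nonzero on an even number of Boolean points. This finish is more elementary and self-contained (it is essentially the $m=n$ case of \lref{lem:trick} read mod~2), and it is in fact forced by your choice of $Q$: \lref{lem:complement-det} is stated for $(0,1)$-matrices and would not apply directly to $Q|_S$, whose merged column can contain entries equal to $2$. The paper's complementation route, by contrast, exploits the fact that its exceptional row already has even sum and so is left fixed by the complementation. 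Both arguments are of comparable length; yours trades one lemma for a short counting observation.
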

 
\begin{proof}
Without loss of generality, we may assume that $n\ge2$, $a = 1$, $b=1$, $c=2$. 
Let $L[X]=[x_{ij}]$ and define
  $$L[X]' =
  \left(\begin{array}{ccccc}
    1      & 1      & 0      & \cdots & 0 \\
    x_{21} & x_{22} & x_{23} & \cdots & x_{2n} \\
    \vdots & \vdots & \vdots & \ddots & \vdots \\
    x_{n1} & x_{n2} & x_{n3} & \cdots & x_{nn}
  \end{array}\right).
  $$
Then $t_{ab} + t_{ac}$ is the number of terms in $\per L[X]'$ which have exactly
$n-1$ symbols. Thus, by \lref{lem:trick}, 
$$t_{ab} + t_{ac} = \sum_{r=0}^{n-1}(-1)^{n-1-r}(n-r)\langle r\rangle \per L[X]'
  \equiv \sum_{r=1}^{n-1}(n-r)\langle r\rangle \det L[X]' \bmod 2.$$
    
If $n$ is odd, then we have two subcases. If $r$ is even, then
$\langle r\rangle\det L[X]'$ is even, by \lref{lm:even-perm}. If $r$
is odd, then $n-r$ is even, and so each term in the summation is even.
   
If $n$ is even, then we use a trick similar to \tref{th:det-mult-4} by
pairing up complementary terms. Our result follows from
\lref{lm:even-perm} and \lref{lem:complement-det} when 
$r$ is even and $r$ is odd, respectively.
\end{proof}

This immediately gives us a surprisingly simple result which 
lays the groundwork for the patterns found in the remainder of the
section.

\begin{coro}\label{cy:same-delta}
  Let $L$ be a Latin square of order $n$. Then for all $a,b,c,d$, 
  $$t_{ab} \equiv t_{cd} \bmod 2.$$
\end{coro}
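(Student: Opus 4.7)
The plan is to obtain the corollary directly from Theorem~\ref{th:same-delta-row} by using the fact that the statement in that theorem is invariant under conjugation of the Latin square, whereas the corollary involves a stronger quantifier over the deleted row and column independently.

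First I would observe that Theorem~\ref{th:same-delta-row}, applied to $L$ (which is in particular a row-Latin square), immediately gives
\[
t_{ab}(L) \equiv t_{ad}(L) \bmod 2
\]
for all $a,b,d$, i.e.\ congruence holds whenever the deleted row is fixed.

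Next I would apply the same theorem to the transpose $L^{T}$, that is, the $213$-conjugate of $L$. Since $L^{T}$ is also a Latin square, and transposition sends the submatrix obtained by deleting row $i$ and column $j$ of $L$ to the submatrix obtained by deleting row $j$ and column $i$ of $L^{T}$ (with the roles of rows and columns swapped among the chosen entries), there is a natural bijection between transversals of $L(i\mmid j)$ and transversals of $L^{T}(j\mmid i)$. Hence $t_{ij}(L)=t_{ji}(L^{T})$. Plugging into Theorem~\ref{th:same-delta-row} applied to $L^{T}$ yields
\[
t_{ba}(L) \equiv t_{bc}(L)\bmod 2
\]
for the wrong-way pairs, which after relabelling says $t_{ad}(L)\equiv t_{cd}(L)\bmod 2$ for all $a,c,d$. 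Chaining the two congruences gives $t_{ab}\equiv t_{ad}\equiv t_{cd}\bmod 2$, as required.

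There is no real obstacle here; the only thing one must be careful about is verifying the conjugation bijection cleanly, namely that transversals in a Latin array are preserved under swapping rows and columns and that the deletion operation commutes with transposition in the obvious way. Once those bookkeeping steps are in place, the corollary is a two-line consequence of Theorem~\ref{th:same-delta-row}.
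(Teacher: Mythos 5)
Your proposal is correct and is essentially the paper's own argument: apply Theorem~\ref{th:same-delta-row} to $L$ to vary the column, then to the transpose of $L$ (using $t_{ij}(L)=t_{ji}(L^{T})$) to vary the row, and chain the two congruences. The only blemish is a typo in your intermediate display, where $t_{ba}(L)\equiv t_{bc}(L)$ should read $t_{ba}(L)\equiv t_{ca}(L)$; your relabelled conclusion $t_{ad}\equiv t_{cd}$ is the correct one.
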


\begin{proof}
  Since $L$ is a row-Latin square, $t_{ab} \equiv t_{ad} \bmod 2$ by
  \tref{th:same-delta-row}. Moreover, since the transpose of $L$ is a row-Latin square, 
  $t_{ad} \equiv t_{cd} \bmod 2$.
\end{proof}

This simple observation leads to several patterns relating to deleting
a row and a column of a Latin square.

\begin{coro}\label{cor:row-Latin-even}
 Let $R$ be an $(n-1)\times n$ row-Latin rectangle, where $n$ is
 even. Then the number of transversals in $R$ is even.
\end{coro}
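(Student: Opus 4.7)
The plan is to extend $R$ to a row-Latin square of order $n$ and then express the number of transversals of $R$ as a sum of quantities $t_{ab}$, which are controlled modulo $2$ by \tref{th:same-delta-row}.

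Concretely, first I would append any permutation of $[n]$ as an $n$-th row of $R$ to obtain a row-Latin square $L$ of order $n$. Then a transversal of $R$ is a choice of $n-1$ entries, one per row, using $n-1$ distinct columns (so one column, say column $j$, is skipped) and $n-1$ distinct symbols. Such a selection is precisely a transversal of the $(n-1)\times(n-1)$ submatrix $L(n\mmid j)$, so classifying transversals of $R$ by the skipped column gives
\[
\#\{\text{transversals of }R\} \;=\; \sum_{j=1}^n t_{nj}(L).
\]

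Next I apply \tref{th:same-delta-row} to the row-Latin square $L$ with $a=n$: it yields $t_{nj}(L) \equiv t_{n1}(L) \bmod 2$ for every $j\in[n]$. Hence
\[
\#\{\text{transversals of }R\} \;\equiv\; n\cdot t_{n1}(L) \;\equiv\; 0 \bmod 2,
\]
since $n$ is even, which is the desired conclusion.

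There is no real obstacle here beyond matching up the two definitions correctly: one must check that the transversals of $L(n\mmid j)$, as $j$ ranges over $[n]$, biject with the transversals of $R$ (each transversal of $R$ omits exactly one column, and that column is unique). The rest is a single application of the preceding theorem.
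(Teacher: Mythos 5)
Your proof is correct and is essentially identical to the paper's: extend $R$ to a row-Latin square $L$ by appending a row, write the transversal count of $R$ as $\sum_{j}t_{nj}(L)$, and apply \tref{th:same-delta-row} together with the evenness of $n$. The bijection between transversals of $R$ and transversals of the $L(n\mmid j)$ that you spell out is exactly what the paper leaves implicit in the phrase ``by definition.''
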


\begin{proof}
  Let $L$ be some row-Latin square formed by adding one row to $R$. By
  definition, the number of transversals in $R$ is
  $$t_{n1}(L) + t_{n2}(L) + \cdots + t_{nn}(L).$$ 
  Each of these terms is congruent modulo 2 (by
  \tref{th:same-delta-row}) and $n$ is even. 
\end{proof}

Each $(n-1)\times n$ Latin rectangle $R$ has a unique completion to a
Latin square $L$, and each transversal of $R$ corresponds to a
so-called {\em near transversal} of $L$.  \cyref{cor:row-Latin-even}
does not generalise to odd orders, as there are some rectangles that
have an even number of transversals and other rectangles that have an
odd number of transversals. If any row is removed from
the Cayley table of a cyclic group of odd order, the resulting Latin
rectangle has an odd number of transversals.  This can been seen by
combining two well-known features of the cyclic group tables of odd
order. Firstly each near transversal extends to a (unique)
transversal, and secondly there are an odd number of transversals.

We define $N_r=N_r(L)$ to be the number of diagonals of weight $n-1$
in $L$ where the symbol that appears in row $r$ also appears in
another row of the diagonal. The
following two results follow directly from the definition of
$t_{ij}$.

\begin{lemma}\label{lm:delta-along-row}
 Let $L$ be a Latin square of order $n$. Then for any row $r$, 
 $$\sum_{c=1}^{n}t_{rc} = E_n+N_r.$$
\end{lemma}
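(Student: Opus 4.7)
The plan is to set up a bijection between the multiset counted by $\sum_{c=1}^{n} t_{rc}$ and the disjoint union of the transversals of $L$ with the $N_r$ diagonals of weight $n-1$ in which the row-$r$ symbol is repeated.

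First I would unfold the definition of $t_{rc}$: a transversal $T'$ of $L(r \mmid c)$ is a choice of $n-1$ entries of $L$ using each row of $[n] \setminus \{r\}$ once, each column of $[n] \setminus \{c\}$ once, and $n-1$ distinct symbols. Given any pair $(c, T')$ contributing to the sum, I would extend $T'$ by the unique entry $(r,c,s)$ of $L$ lying in row $r$ and column $c$. This produces a diagonal $D$ of $L$: every row and every column is used exactly once. The weight of $D$ is either $n$ (if $s$ is not among the $n-1$ symbols of $T'$) or $n-1$ (if $s$ coincides with one of those symbols, and only in that case).

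Then I would read off the two cases. In the first case $D$ is a transversal, contributing to $E_n$. In the second case $D$ is a weight-$(n-1)$ diagonal in which the symbol appearing in row $r$ also appears in another row, which is exactly what $N_r$ counts. Conversely, given a transversal or a diagonal of the second type, deleting its entry in row $r$, say $(r,c,s)$, leaves $n-1$ entries which use distinct rows, distinct columns, and (by the weight condition together with the fact that the repeated symbol is the one contributed by row $r$) distinct symbols; hence it is a transversal of $L(r \mmid c)$ counted once in the sum with this particular $c$.

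The only thing to check carefully is that in the weight-$(n-1)$ case the repeat must involve row $r$'s symbol — but this is precisely how $N_r$ is defined, so no diagonal is double-counted and none is missed. There is no real obstacle: the identity is a direct consequence of unpacking the definitions, so the entire proof is essentially this bijective bookkeeping.
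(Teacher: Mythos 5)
Your proof is correct and is essentially the paper's argument: the paper's one-line proof observes exactly this extension of each transversal of $L(r\mmid c)$ by the entry in cell $(r,c)$, yielding either a transversal of $L$ or a weight-$(n-1)$ diagonal counted by $N_r$. You have simply spelled out the bijection and its inverse in more detail.
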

 
\begin{proof}
  Each transversal in the matrix formed by deleting row $r$ and column
  $c$ extends to either a transversal of $L$ or a diagonal of weight
  $n-1$ depending on which symbol is in the cell $(r,c)$.
\end{proof}

\begin{lemma}\label{lm:delta-along-everyone}
 Let $L$ be a Latin square of order $n$. Then 
$$\sum_{r=1}^{n}\sum_{c=1}^{n}t_{rc} = n E_n + 2 E_{n-1}.$$
\end{lemma}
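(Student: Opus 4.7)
The plan is to reduce the identity to \lref{lm:delta-along-row} and then reinterpret the residual sum via a simple double count. Summing the formula in \lref{lm:delta-along-row} over all rows $r$ immediately gives
\[
\sum_{r=1}^n \sum_{c=1}^n t_{rc} = n E_n + \sum_{r=1}^n N_r,
\]
so it suffices to establish that $\sum_{r=1}^n N_r = 2 E_{n-1}$.

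To evaluate $\sum_r N_r$, I would interchange the order of summation and view it as counting ordered pairs $(r, D)$, where $D$ is a diagonal of $L$ of weight $n-1$ and the symbol appearing in row $r$ of $D$ also appears in some other row of $D$. Fix a diagonal $D$ of weight $n-1$: the multiset of symbols on $D$ consists of $n$ entries taking only $n-1$ distinct values, so exactly one symbol is repeated, and that symbol occupies exactly two of the $n$ rows covered by $D$ (since $D$ contains one entry per row). Consequently, exactly two choices of $r$ contribute to the inner count for this particular $D$, namely the two rows where the repeated symbol occurs. Summing over the $E_{n-1}$ diagonals of weight $n-1$ therefore yields $\sum_r N_r = 2E_{n-1}$, which combined with the display above proves the lemma.

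There is no real obstacle here; the argument is purely a reorganisation of the counting already set up by \lref{lm:delta-along-row}. The only point requiring any care is the elementary observation that a diagonal of weight exactly $n-1$ has precisely one repeated symbol and that this symbol appears in exactly two of the rows of the diagonal, which is immediate from a pigeonhole count on the $n$ entries versus the $n-1$ distinct symbols.
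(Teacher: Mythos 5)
Your proof is correct and is essentially the same double count as the paper's: the paper directly observes that each transversal of $L$ is counted $n$ times in the full sum and each diagonal of weight $n-1$ is counted twice, once for each entry carrying the repeated symbol. Your only (cosmetic) difference is that you reach this by summing \lref{lm:delta-along-row} over $r$ and then evaluating $\sum_r N_r=2E_{n-1}$, which packages the identical pigeonhole observation about the unique repeated symbol.
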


 \begin{proof}
  Across the whole summation, each transversal of $L$ is counted $n$
  times (once for each entry in the transversal) and each diagonal
  with weight $n-1$ is counted twice (once for each entry containing
  the duplicated symbol).
 \end{proof}

Our next main result has a curious feature, which we explain after proving
the result.

\begin{theorem}\label{th:delta-equals-trans}
 Let $L$ be a Latin square of odd order $n$. Then
 for any $r$ and $c$, $$t_{rc} \equiv E_n \bmod 2.$$
\end{theorem}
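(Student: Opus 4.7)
The plan is to combine two earlier results: \cyref{cy:same-delta}, which tells us that all $t_{rc}$ agree modulo~$2$, and \lref{lm:delta-along-everyone}, which evaluates $\sum_{r,c}t_{rc}$ in closed form. The idea is that when $n$ is odd, summing a common residue $n^2$ times reproduces that residue, so the double sum gives us direct access to $t_{rc}\bmod 2$.

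First I would note that by \cyref{cy:same-delta}, there is a single value $t\in\Z/2\Z$ for which $t_{ij}\equiv t\bmod 2$ for every pair $(i,j)$. Summing this congruence over all $(r,c)\in[n]\times[n]$ gives
$$\sum_{r=1}^{n}\sum_{c=1}^{n}t_{rc}\equiv n^{2}t\equiv t\bmod 2,$$
because $n^2$ is odd. On the other hand, \lref{lm:delta-along-everyone} tells us that
$$\sum_{r=1}^{n}\sum_{c=1}^{n}t_{rc}=nE_n+2E_{n-1}\equiv nE_n\equiv E_n\bmod 2,$$
again using that $n$ is odd. Comparing the two expressions yields $t\equiv E_n\bmod 2$, which is exactly the claim.

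Since the proof reduces to a one-line parity computation once the earlier results are cited, there is no real obstacle; the heavy lifting has already been done in \tref{th:same-delta-row} (via \cyref{cy:same-delta}) and \lref{lm:delta-along-everyone}. If one preferred a single-row version, the same argument using only \lref{lm:delta-along-row} would give $t\equiv E_n+N_r\bmod 2$, and one would then need to show that $N_r$ is even. The global-sum route sidesteps this subtlety because $\sum_{r}N_r=2E_{n-1}$ is manifestly even, whereas individual parity of $N_r$ is not obvious.
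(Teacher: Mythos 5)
Your proof is correct and takes essentially the same route as the paper's: both use \cyref{cy:same-delta} to equate $t_{rc}$ with the full double sum $\sum_{i,j}t_{ij}$ modulo $2$ (since $n^2$ is odd) and then evaluate that sum as $nE_n+2E_{n-1}\equiv E_n \bmod 2$ via \lref{lm:delta-along-everyone}. Nothing further is needed.
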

 
\begin{proof}
  Since $n$ is odd, \cyref{cy:same-delta} ensures that
$$t_{rc} \equiv \sum_{i=1}^{n}\sum_{j=1}^{n} t_{ij} \bmod 2.$$
  Then \lref{lm:delta-along-everyone} gives 
$t_{rc} \equiv n E_n + 2E_{n-1} \equiv E_n \bmod 2$, as desired.
\end{proof}

\begin{coro}\label{cor:n-r-even}
Let $L$ be a Latin square of order $n$. Then $N_r$ is even
for all rows~$r$.
\end{coro}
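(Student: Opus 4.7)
The plan is to combine Lemma~\ref{lm:delta-along-row}, which rearranges to $N_r = \sum_{c=1}^{n} t_{rc} - E_n$, with the parity information we already have about the $t_{rc}$. It will be cleanest to split into the odd-order and even-order cases, since the congruences available for $t_{rc}$ are slightly different in each.

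For $n$ odd, I would invoke Theorem~\ref{th:delta-equals-trans}, which tells us that $t_{rc} \equiv E_n \bmod 2$ for every $c$. Summing over $c$ gives $\sum_{c=1}^{n} t_{rc} \equiv n E_n \equiv E_n \bmod 2$, since $n$ is odd. Plugging into Lemma~\ref{lm:delta-along-row} gives $N_r = \sum_c t_{rc} - E_n \equiv 0 \bmod 2$.

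For $n$ even, Corollary~\ref{cy:same-delta} gives $t_{rc} \equiv t_{r1} \bmod 2$ for all $c$, so $\sum_{c=1}^{n} t_{rc} \equiv n \, t_{r1} \equiv 0 \bmod 2$. Moreover, $E_n$ is even by Theorem~\ref{th:bala}. Again using Lemma~\ref{lm:delta-along-row}, we obtain $N_r \equiv 0 \bmod 2$.

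There is no real obstacle here: the corollary is essentially a bookkeeping consequence of results already in place. The only thing to be a little careful about is making sure the parity of $n$ is handled consistently with the right tool (Theorem~\ref{th:delta-equals-trans} in the odd case, Theorem~\ref{th:bala} together with Corollary~\ref{cy:same-delta} in the even case), so that $E_n$ and the row sum $\sum_c t_{rc}$ have matching parity and cancel in Lemma~\ref{lm:delta-along-row}.
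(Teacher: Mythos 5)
Your proposal is correct and follows essentially the same route as the paper: both rearrange Lemma~\ref{lm:delta-along-row}, show $\sum_c t_{rc}\equiv E_n\bmod 2$ by splitting on the parity of $n$, and use Theorem~\ref{th:delta-equals-trans} in the odd case and Theorem~\ref{th:bala} together with the constancy of $t_{rc}$ mod 2 in the even case. No issues.
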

 
\begin{proof}
  Each term in $\sum_{c=1}^{n}t_{rc}$ is the same modulo 2.  
  If $n$ is even, this sum is even, whereas if $n$ is odd,
  the sum is equivalent to $t_{r1}$ modulo 2. In either case, the sum
  is equivalent to $E_n$ modulo 2, by \tref{th:bala} and
  \tref{th:delta-equals-trans}, respectively. The result now follows
  from \lref{lm:delta-along-row}.
\end{proof}

An interesting feature of \tref{th:delta-equals-trans} lies in the
fact that a transversal of $L$ can be inferred without locating
one. In each of the other previous results, the number of diagonals
with specific properties is of a similar form: congruent to 0 modulo
$m$ for some $m$. Congruences like this cannot be used to show
existence of transversals. However, \tref{th:delta-equals-trans} gives
a slightly different approach. In particular, if $t_{rc} \equiv 1
\bmod 2$ for any row and column, then there must exist a transversal
in $L$ even if none go through the cell $(r,c)$.

\begin{exam}\label{eg:existbycong}
  Consider $L_5$:
  \begin{center}
    \begin{tikzpicture}
      \begin{scope}[xshift=3cm]
        \matrix (A) [square matrix]{
          1 & 2 & 3 & 4 & 5 \\
          |[fill=trans]|2 & 1 & 4 & 5 & 3 \\
          3 & 4 & 5 & 1 & 2 \\
          4 & 5 & 2 & 3 & 1 \\
          5 & 3 & 1 & 2 & 4 \\
        };
        \draw[ultra thick] (A-1-1.north west) rectangle (A-5-5.south east);
      \end{scope}
    \end{tikzpicture}
  \end{center}
  
Every transversal in $L_5$ goes through the shaded entry. 
In particular, there are no transversals including the entry in the
top left corner.
However, the main diagonal is
the sole transversal in $L_5(1\mmid 1)$, so $t_{11} = 1$. Thus, we can
use \tref{th:delta-equals-trans} to deduce that at least one
transversal exists in $L_5$ without finding such a transversal.
\end{exam}

We finish the discussion of $t_{ij}$ with a rather curious pattern
found for small orders. It is very much in the spirit of
\tref{t:perminorsquad} (but does not seem to follow directly from it).

\begin{conjecture}\label{conj:4-entries-add-up-to-4}
 Let $L$ be a Latin square of order $n$. Then 
 $t_{ac} + t_{bc} + t_{ad} + t_{bd} \equiv 0 \bmod 4$ for all $a,b,c,d$.
\end{conjecture}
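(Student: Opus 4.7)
The approach I would take is to follow the inclusion--exclusion blueprint already laid down for \tref{th:trans-mult-4} and \tref{t:perminorsquad}, but one level down (on minor-permanents of $L[X]$ rather than $\per L[X]$ itself). Apply \lref{lem:trick} with $m=n-1$ and polynomial $\per L[X](i\mmid j)$ to get
\[
t_{ij}=\sum_{r=0}^{n-1}(-1)^{n-1-r}(n-r)\,\langle r\rangle\!\per L[X](i\mmid j),
\]
sum over the four index pairs, and write
\[
F(x)=\per L[x](a\mmid c)+\per L[x](b\mmid c)+\per L[x](a\mmid d)+\per L[x](b\mmid d).
\]
Since $L[x]\in\Lambda_n^r$ whenever $x$ is a $(0,1)$-vector with $r$ ones, the conjecture reduces to showing, for each $r$, that $(n-r)\langle r\rangle F \equiv 0\bmod 4$.

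Several subcases fall to existing results: for $n$ odd and $r\equiv 2\bmod 4$, \tref{t:perminorsquad} gives $F(x)\equiv 0\bmod 4$ outright; for $n$ odd and $r$ odd, the prefactor $(n-r)$ is already even, and one would look to upgrade this to a factor of $4$ by pairing $x$ with its complement $\mathbf{1}-x$ (so $L[x]+L[\mathbf{1}-x]=J$) and proving a suitable minor analogue of \lref{lem:sum-of-complement-0-mod-4}. The genuinely new ingredient is a mod-$4$ analogue of \tref{t:perminorsquad} for $k\equiv 0\bmod 4$: that
\[
\per A(a\mmid c)+\per A(b\mmid c)+\per A(a\mmid d)+\per A(b\mmid d)\equiv 0\bmod 4
\]
for every $A\in\Lambda_n^k$ with $n$ odd and $k\equiv 0\bmod 4$. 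The attempted proof would build the order-$(n-1)$ matrix $B$ exactly as in \tref{t:perminorsquad} (first row and column summing to $2k$, remaining rows and columns to $k$), so that $\per B$ equals the desired four-term sum by multilinearity, and then pair subsets with their complements in Ryser's formula.

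The hard point, and the main obstacle, is this last step. When $k\equiv 2\bmod 4$, the dominant mod-$4$ term from each complementary pair vanishes on parity grounds; for $k\equiv 0\bmod 4$ the same computation leaves a residue of $2\prod_i x_i$ per pair, and a second layer of cancellation is required. For even $n$ the situation is worse still, because $(n-r)$ and $r$ share parity, so the free factor of $2$ supporting the odd-$r$ subcase evaporates; here I would try the conjugate-of-$L$ trick of \tref{th:trans-mult-4}, expressing the four-term sum for $L$ in terms of analogous quantities for the $312$- and $231$-conjugates, and run the $\det L[X]$/$\per L[X]$ duality from \sref{s:2-mod-4-trans} in parallel. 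If the $k\equiv 0\bmod 4$ minor identity turns out to fail for arbitrary elements of $\Lambda_n^k$, the conjecture will need additional structure from $L[x]$ being a sum of permutation matrices originating from the rows of a Latin square, rather than a generic row/column $k$-regular matrix. That is where I expect the substance of a full proof to lie.
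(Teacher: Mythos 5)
This statement is \cjref{conj:4-entries-add-up-to-4}, which the paper leaves \emph{unproved}: the authors present it as ``a rather curious pattern found for small orders'' and explicitly remark that it ``does not seem to follow directly from'' \tref{t:perminorsquad}. So there is no proof in the paper to compare yours against, and your text is, by your own account, a plan of attack rather than a proof. The plan itself is sound as far as it goes: the identity $t_{ij}=\sum_{r=0}^{n-1}(-1)^{n-1-r}(n-r)\langle r\rangle\per L[X](i\mmid j)$ is the correct application of \lref{lem:trick} with $m=n-1$ (it is exactly the expansion used in the proof of \tref{th:same-delta-row}), the observation that $L[x]\in\Lambda_n^r$ lets you invoke \tref{t:perminorsquad} when $n$ is odd and $r\equiv2\bmod4$, and your computation that the complementary-pair cancellation in Ryser's formula leaves a residue $2\prod_i x_i$ when $k\equiv0\bmod4$ is accurate.

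The gaps, however, are exactly where the substance lies, and none of them is closed. (i) The proposed mod-$4$ minor identity for $A\in\Lambda_n^k$ with $k\equiv0\bmod4$ is not established, and the leftover $2\prod_i x_i$ terms do not obviously cancel: you would need $\sum_{S_0}(-1)^{|S_0|}\prod_i\bigl(\sum_{j\in S_0}b_{ij}\bigr)\equiv0\bmod2$, which is essentially a parity statement about $\per B$ that does not follow from anything in the paper. (ii) The odd-$r$ case needs a minor analogue of \lref{lem:sum-of-complement-0-mod-4}, which you name but do not supply; note that \cyref{cy:perminorsmod2} is unavailable there because the row and column sums of $L[x]$ are odd. (iii) The even-$n$ case is entirely open, and the conjugate trick of \tref{th:trans-mult-4} does not transfer cleanly: deleting row $a$ and column $c$ of $L$ corresponds under the $312$-conjugate to deleting a row indexed by $c$ and a \emph{column indexed by a symbol}, so the four-term sum for $L$ is not the four-term sum for $L'$ with relabelled indices. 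Finally, reducing the conjecture to ``$(n-r)\langle r\rangle F\equiv0\bmod4$ for each $r$ separately'' is stronger than what is needed and may simply be false for individual $r$; the paper's own arguments of this type (e.g.\ \tref{th:det-mult-4}, \tref{th:same-delta-row}) always pair the $r$ and $n-r$ terms before reducing mod $4$, and any successful attack on this conjecture would likely have to do the same. In short: your outline correctly locates the obstacles, but the statement remains a conjecture both in the paper and after your attempt.
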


In light of \cyref{cy:same-delta}, \cjref{conj:4-entries-add-up-to-4}
implies a very specific structure for the matrix $[t_{ij}]$. 
Each pair of rows either agrees modulo 4 or differs in every column
by 2 modulo 4. A similar observation holds for columns.

\section{\label{s:diags}Counting diagonals by their number of symbols}

In this section, we look at relationships between the $E_i=E_i(L)$,
that is, the counts of diagonals of $L$ according to how many symbols
they contain.  We will also be interested in $R_i=R_i(L)$ which we
define to be shorthand for $\langle i \rangle \per L[X]$. 
Note that $R_0=0$. The $R_i$ are related to the $E_i$ by
\begin{equation}\label{eq:e-m}
E_m = \sum_{r=1}^{m} (-1)^{m-r} \binom{n-r}{n-m}R_r,
\end{equation}
where $n$ is the order of $L$.
This relationship was given explicitly in \cite{AA04} and can easily be 
derived from \lref{lem:trick}.

In several proofs we will encounter $d_n$, the number of {\em derangements}
in $\sym_n$. From the well-known recurrence $d_n=nd_{n-1}+(-1)^n$,
we learn that 
\begin{equation}\label{e:deranged}
d_n\equiv1\text{ mod $4$ when $n$ is even.}
\end{equation}

The following proposition is a list of identities which are either
immediate from the definition of a Latin square or are proved in \cite{Bal90}.

\begin{lemma}\label{lm:identities} 
Let $L$ be a Latin square of order $n$.
 \begin{enumerate}[label=\rm({\alph*})]
  \item $R_1 = n$,
  \item $R_{n-1} = nd_n$,
  \item $R_n = n!$,
  \item $R_{2i}$ is even for each integer $i$,
  \item $R_{i} + R_{n-i}$ is even if $n$ is even, and
  \item $R_{n/2}$ is even if $n$ is even.
 \end{enumerate}
\end{lemma}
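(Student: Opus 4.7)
The plan is to handle the six parts individually; each is a short, self-contained calculation. Parts (a)--(c) are obtained by evaluating $\per L[X]$ at specific $0/1$-vectors, while parts (d)--(f) are parity statements that follow from \lref{lm:even-perm} together with \lref{lem:complement-det}, after exploiting the fact that $\per\equiv\det\bmod2$.

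For (a), I would substitute $x_k=1$ and $x_j=0$ for $j\ne k$; then $L[X]$ becomes the $0/1$ permutation matrix $P_k$ recording the positions of symbol $k$ in $L$, so $\per L[X]=1$. Summing over the $n$ choices of $k$ yields $R_1=n$. For (c), setting every $x_j=1$ gives $L[X]=J$, so $R_n=\per J=n!$. For (b), the complementary substitution $x_k=0$, $x_j=1$ for $j\ne k$ yields $L[X]=J-P_k$, whose permanent equals the number of permutations avoiding the fixed permutation underlying $P_k$; that is exactly $d_n$. Summing over $k\in[n]$ gives $R_{n-1}=nd_n$.

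For (d), each of the $\binom{n}{2i}$ summands of $R_{2i}$ arises by setting $2i$ of the variables to $1$. Since each of the chosen $2i$ symbols appears exactly once in every row of $L$, the resulting matrix $L[X]$ has constant row sum $2i$. By \lref{lm:even-perm} its permanent is even, so $R_{2i}$ is a sum of even numbers.

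For (e), I pair each subset $S\subseteq[n]$ of size $i$ with its complement $\bar S$ of size $n-i$; the corresponding matrices are $M_S=L[X]_S$ and $M_{\bar S}=J-M_S$, and this pairs every summand of $R_i$ with exactly one summand of $R_{n-i}$. If $i$ is even, then $n-i$ is even as well, so both row sums are even and both permanents are even by \lref{lm:even-perm}. If $i$ is odd, then every row sum of $M_S$ is odd, so in the notation of \lref{lem:complement-det} we have $M_S^*=J-M_S$; combining $\det M_S+\det(J-M_S)\equiv0\bmod2$ with $\per\equiv\det\bmod2$ gives $\per M_S+\per(J-M_S)\equiv0\bmod2$. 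Summing over pairs, $R_i+R_{n-i}$ is even. Part (f) uses the same pairing internally within $R_{n/2}$: since $n\ge2$, no length-$n$ zero-one vector equals its own complement, so the summands of $R_{n/2}$ split into complementary pairs, and the same case analysis (on the parity of $n/2$) makes each pair sum even.

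There is no real obstacle here; the main things to be careful about are (i) verifying in (a) and (b) that the matrices obtained really are permutation matrices or complements thereof, so that derangements enter correctly, and (ii) in (e) and (f), using $\per\equiv\det\bmod2$ to transfer \lref{lem:complement-det} from determinants to permanents, and checking that the complement pairing in (f) is fixed-point-free so no summand is counted as ``half a pair''.
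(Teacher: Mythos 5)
Your proof is correct and follows essentially the route the paper intends: the paper states these identities without proof, attributing the nontrivial parts to Balasubramanian and calling the rest immediate, and your derivations of (d)--(f) use exactly the tools (\lref{lm:even-perm}, \lref{lem:complement-det} and $\per\equiv\det\bmod2$ applied to the complementary pairing of subsets) that the paper sets up and reuses for the same purpose in \tref{th:bala}. The details you flag as needing care --- that the single-symbol and complement substitutions yield $P_k$ and $J-P_k$, and that the complement pairing in (f) is fixed-point-free --- are all handled correctly, so there are no gaps.
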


Balasubramanian~\cite{Bal90} used (d) and (e) to show \tref{th:bala},
while Akbari and Alipour~\cite{AA04} showed the following two results.

\begin{theorem}\label{th:E_n-3}
If $L$ is a Latin square of order $n\=2\bmod4$ then $E_{n-3}$ is even.
\end{theorem}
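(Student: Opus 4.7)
The plan is to start from \eref{eq:e-m}, which gives
\[
E_{n-3} = \sum_{r=1}^{n-3} (-1)^{n-3-r} \binom{n-r}{3} R_r,
\]
and reduce everything modulo $2$. The first step is to determine for which $r$ the binomial coefficient $\binom{n-r}{3}$ is odd. By Lucas' theorem (or directly from $\binom{n-r}{3}=(n-r)(n-r-1)(n-r-2)/6$), this happens precisely when $n-r \equiv 3 \bmod 4$. Because $n \equiv 2 \bmod 4$, the surviving values of $r$ are exactly those with $r \equiv 3 \bmod 4$ (which are in particular odd, consistent with \lref{lm:identities}(d) killing all even $r$). For such $r$, the sign $(-1)^{n-3-r}$ equals $+1$, since $n-3-r$ is divisible by $4$.

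Consequently, modulo $2$,
\[
E_{n-3} \equiv \sum_{\substack{3 \le r \le n-3 \\ r \equiv 3 \bmod 4}} R_r \bmod 2.
\]
The next step is to pair $r$ with $n-r$. Since $n \equiv 2 \bmod 4$, the involution $r \mapsto n-r$ preserves both the congruence $r \equiv 3 \bmod 4$ and the range $[3, n-3]$. Each orbit $\{r, n-r\}$ of size two contributes $R_r + R_{n-r}$, which is even by \lref{lm:identities}(e). The only exceptional case is the fixed point $r=n/2$, which occurs precisely when $n \equiv 6 \bmod 8$; there the unpaired term $R_{n/2}$ is itself even by \lref{lm:identities}(f). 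Summing, the whole right-hand side vanishes modulo $2$, proving $E_{n-3}$ is even.

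The only step I expect to be slightly technical is pinning down when $\binom{n-r}{3}$ is odd; once that is settled, the result reduces to a clean pairing argument built on parts (d), (e) and (f) of \lref{lm:identities}. As a sanity check, in the smallest nontrivial case $n=6$ the sum collapses to the single term $R_3 = R_{n/2}$, which is handled directly by (f); for $n=10$ one gets the pair $(R_3,R_7)$, which is handled by (e); and for $n=14$ one gets the pair $(R_3,R_{11})$ together with the fixed point $R_7$, again covered by (e) and (f).
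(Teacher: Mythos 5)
Your argument is correct. Note that the paper itself offers no proof of this statement: \tref{th:E_n-3} is quoted as a result of Akbari and Alipour \cite{AA04}, so there is no in-paper proof to compare against. Your derivation is a clean, self-contained route using only machinery the paper does supply: the identity \eref{eq:e-m}, the observation that $\binom{n-r}{3}$ is odd exactly when $n-r\equiv3\bmod4$ (hence, for $n\equiv2\bmod4$, exactly when $r\equiv3\bmod4$), and then the pairing $r\leftrightarrow n-r$ resolved by parts (e) and (f) of \lref{lm:identities}. I checked the edge cases: the involution does preserve the class $r\equiv3\bmod4$ and the range $[3,n-3]$ when $n\equiv2\bmod4$, the fixed point $r=n/2$ occurs precisely when $n\equiv6\bmod8$ and is killed by (f), and the count $(n-2)/4$ of surviving indices has the right parity in each case. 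This is the same style of argument (inclusion--exclusion plus complementary pairing) that the paper uses for \tref{th:bala} and \tref{th:det-mult-4}, and is presumably close in spirit to the original proof in \cite{AA04}.
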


\begin{theorem}\label{th:E_n-1}
If $L$ is a Latin square of order $n$ then $E_{n-1}$ is even.
\end{theorem}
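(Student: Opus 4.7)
The plan is to read off a closed-form for $E_{n-1}$ from equation~(\ref{eq:e-m}) and then use \lref{lm:identities} to kill everything modulo 2. Setting $m = n-1$ in (\ref{eq:e-m}), the binomial coefficient becomes $\binom{n-r}{1} = n-r$, giving
$$E_{n-1} = \sum_{r=1}^{n-1}(-1)^{n-1-r}(n-r)R_r.$$
Reducing modulo 2, the signs disappear and it suffices to show $\sum_{r=1}^{n-1}(n-r)R_r \equiv 0 \bmod 2$.

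I would now split on the parity of $n$. If $n$ is odd, then $(n-r)$ is odd exactly when $r$ is even, so the sum collapses modulo 2 to $\sum_{r \text{ even}} R_r$, and each such $R_r$ is even by \lref{lm:identities}(d). Done.

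If $n$ is even, then $(n-r)$ is odd exactly when $r$ is odd, so modulo 2 the sum reduces to $\sum_{r \text{ odd},\, 1\le r \le n-1} R_r$. Pair each odd $r$ with $n-r$ (which is also odd, since $n$ is even). By \lref{lm:identities}(e), each such pair contributes $R_r + R_{n-r} \equiv 0 \bmod 2$. The only way this pairing fails to be a perfect matching is when $r = n-r$, i.e.\ $r = n/2$, which requires $n/2$ to be odd, i.e.\ $n \equiv 2 \bmod 4$. In exactly that case the unpaired term is $R_{n/2}$, which is even by \lref{lm:identities}(f). So in all cases the sum is even.

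The only real subtlety is the fixed point $r = n/2$ when $n \equiv 2 \bmod 4$, but item (f) of \lref{lm:identities} is precisely tailored for it. Given how much of the necessary machinery is already packaged in \lref{lm:identities}, I would expect the proof to fit in a few lines; the only thing to be careful about is matching the parity cases correctly and not mis-indexing the pairing.
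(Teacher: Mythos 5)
Your argument is correct. Note, however, that the paper does not actually prove this statement: Theorem~\ref{th:E_n-1} is quoted as a result of Akbari and Alipour \cite{AA04}, so there is no in-paper proof to compare against line by line. What you have written is a valid, self-contained derivation from the machinery the paper does supply: specialising \eref{eq:e-m} to $m=n-1$ gives $E_{n-1}=\sum_{r=1}^{n-1}(-1)^{n-1-r}(n-r)R_r$, the odd case is killed termwise by \lref{lm:identities}(d), and the even case is handled by pairing $R_r$ with $R_{n-r}$ via \lref{lm:identities}(e), with the single fixed point $r=n/2$ (occurring exactly when $n\equiv2\bmod4$) absorbed by \lref{lm:identities}(f). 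Your case analysis of when that fixed point arises is the one genuinely delicate step and you get it right. This is exactly the pairing-of-complementary-terms technique the authors use elsewhere (compare the proofs of \tref{th:bala} and of the mod-4 strengthening \tref{th:En-1} for odd orders, which pairs $(n-r)R_r$ with $-rR_{n-r}$), so your proof is not only correct but stylistically the one the paper itself would most naturally have given had it not deferred to \cite{AA04}.
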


We start with patterns in Latin squares of odd order. 
We have two direct corollaries of earlier results.

\begin{coro}\label{cor:odd-ls-mult-4}
  If $L$ is a Latin square of odd order then 
  $R_{4k} \equiv 0\bmod 4$ for each integer $k$.
\end{coro}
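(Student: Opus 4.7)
The plan is to reduce $R_{4k}$ to a sum of permanents of matrices in $\Lambda_n^{4k}$ and then apply \cyref{cor:odd-n-lambda-4k-mult-4} directly.

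First I would unpack the definition. By \defiref{def:angle-r} (and the definition $R_i = \langle i \rangle \per L[X]$), $R_{4k}$ is the sum, over all subsets $S \subseteq [n]$ with $|S| = 4k$, of $\per L[X]$ evaluated at the indicator vector of $S$. For a fixed $S$, the evaluated matrix $L[X]|_S$ is a $(0,1)$-matrix whose $(i,j)$-entry is $1$ precisely when the symbol in cell $(i,j)$ of $L$ belongs to $S$. Because $L$ is Latin, each row and each column of $L[X]|_S$ contains exactly $|S| = 4k$ ones. In other words, $L[X]|_S \in \Lambda_n^{4k}$.

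Now I invoke \cyref{cor:odd-n-lambda-4k-mult-4}: since $n$ is odd, $\per L[X]|_S \equiv 0 \bmod 4$ for every such $S$. Summing this congruence over all $\binom{n}{4k}$ choices of $S$ yields $R_{4k} \equiv 0 \bmod 4$, which is exactly what we want.

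There is essentially no obstacle: the content is entirely in \cyref{cor:odd-n-lambda-4k-mult-4} (which is itself an immediate consequence of \tref{th:odd-reg-01-per-mult-4}). The only thing to check carefully is that the evaluation of $\per L[X]$ at a $0$/$1$ vector genuinely produces the permanent of the corresponding $\Lambda_n^{4k}$ matrix, which follows from the multilinearity of the permanent in its columns (and from the Latin property, which guarantees row and column regularity of the evaluated matrix).
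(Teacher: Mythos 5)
Your proof is correct and is essentially the paper's own argument: the paper likewise observes that each matrix in the sum defining $R_{4k}$ lies in $\Lambda_n^{4k}$ and applies Corollary~\ref{cor:odd-n-lambda-4k-mult-4} term by term. You have merely spelled out the routine verification that the evaluated matrices are $4k$-regular, which the paper leaves implicit.
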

  
\begin{proof}
  Simply apply \cyref{cor:odd-n-lambda-4k-mult-4} to each 
  matrix in the sum that defines $R_{4k}$.
\end{proof}

\begin{coro}\label{cor:odd-ls-2-mod-4s}
If $L$ is a Latin square of odd order $n$ and $k\equiv2\bmod4$, 
then $$R_k + 2R_{n-k} \equiv 0 \bmod 4.$$
\end{coro}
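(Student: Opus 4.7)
The plan is to recognise $R_k$ as a sum of permanents of $(0,1)$-matrices in $\Lambda_n^k$, and then apply \tref{tm:odd-2mod4-complement-permanent} term by term.

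First I would set up notation. For each $S \subseteq [n]$, let $A_S$ denote the matrix obtained from $L[X]$ by setting $x_i = 1$ for $i \in S$ and $x_i = 0$ for $i \notin S$. Since every symbol appears exactly once in each row and column of $L$, the matrix $A_S$ has exactly $|S|$ ones in each row and each column. In particular, for $|S| = k$ we have $A_S \in \Lambda_n^k$, and by definition
$$R_k = \langle k \rangle \per L[X] = \sum_{\substack{S \subseteq [n] \\ |S| = k}} \per A_S.$$

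The key observation is that $J - A_S = A_{[n]\setminus S}$, so as $S$ ranges over $k$-subsets of $[n]$, the complements $[n]\setminus S$ range over $(n-k)$-subsets, giving
$$R_{n-k} = \sum_{\substack{S \subseteq [n] \\ |S| = k}} \per(J - A_S).$$
Thus
$$R_k + 2R_{n-k} = \sum_{\substack{S \subseteq [n] \\ |S| = k}} \bigl(\per A_S + 2\per(J - A_S)\bigr).$$

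Since $n$ is odd and $k \equiv 2 \bmod 4$, \tref{tm:odd-2mod4-complement-permanent} applies to each matrix $A_S \in \Lambda_n^k$, making every summand a multiple of $4$. The conclusion follows. No real obstacle is expected; the whole argument is essentially packaging \tref{tm:odd-2mod4-complement-permanent} into the inclusion-exclusion bookkeeping that defines the $R_j$, and the only thing that needs to be checked carefully is that the $S \leftrightarrow [n]\setminus S$ pairing correctly converts $R_{n-k}$ into the sum of complementary permanents.
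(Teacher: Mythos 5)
Your proposal is correct and is essentially identical to the paper's proof, which simply says to apply Theorem~\ref{tm:odd-2mod4-complement-permanent} to each of the complementary pairs in $R_k$ and $R_{n-k}$; your matrices $A_S$ and $J-A_S=A_{[n]\setminus S}$ are exactly those complementary pairs. The bookkeeping you spell out (that $A_S\in\Lambda_n^k$ for a Latin square and that the $S\leftrightarrow[n]\setminus S$ pairing converts $R_{n-k}$ into the sum of complementary permanents) is the intended, and correct, justification.
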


\begin{proof}
Apply \tref{tm:odd-2mod4-complement-permanent} to each 
of the complementary pairs in $R_{k}$ and $R_{n-k}$.
\end{proof}

In addition, we have: 

\begin{theorem}\label{th:Eioddn}
If $L$ is a Latin square of odd order $n$ then
$E_i$ is even whenever $i$ is even.
\end{theorem}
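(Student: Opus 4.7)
The plan is to read off the parity of $E_m$ directly from the inclusion--exclusion identity
$$E_m = \sum_{r=1}^{m} (-1)^{m-r} \binom{n-r}{n-m} R_r$$
stated as \eref{eq:e-m}, by showing that every individual summand is even under the hypothesis that $n$ is odd and $m$ is even. The argument then splits naturally according to the parity of~$r$.

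For even $r$, the factor $R_r$ is even by \lref{lm:identities}(d), so the corresponding term contributes $0$ modulo $2$. For odd $r$, I would instead use the binomial coefficient. Since $n$ is odd and $r$ is odd, $n-r$ is even; since $n$ is odd and $m$ is even, $n-m$ is odd. Thus $\binom{n-r}{n-m}$ has the form $\binom{\text{even}}{\text{odd}}$. Such a coefficient is always even: by Lucas's theorem applied to the least significant binary digits, $\binom{2a}{2b+1}\equiv \binom{0}{1}\binom{a}{b}\equiv 0\bmod 2$. (If preferred, one can avoid Lucas entirely by pairing the coordinates $\{1,2\},\{3,4\},\ldots$ of $[n-r]$ into $(n-r)/2$ pairs and noting that the involution swapping each pair acts freely on the $\binom{n-r}{n-m}$ subsets of odd size $n-m$, since a fixed subset would have size divisible by $2$.)

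Combining the two cases, each term on the right-hand side of \eref{eq:e-m} is even, hence $E_m$ is even. There is essentially no obstacle here beyond matching up parities; the key observation is simply that the inclusion--exclusion formula conveniently partitions the $r$'s into those where \lref{lm:identities}(d) kills the $R_r$ factor and those where the binomial coefficient itself is even.
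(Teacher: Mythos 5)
Your proof is correct and is essentially identical to the paper's: both read the parity of $E_i$ off \eref{eq:e-m}, using \lref{lm:identities}(d) to kill the terms with $r$ even and Lucas's theorem (via $\binom{\text{even}}{\text{odd}}\equiv 0\bmod 2$) to kill the terms with $r$ odd. The elementary involution argument you offer as an alternative to Lucas is a nice optional touch but does not change the substance.
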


\begin{proof}
By \eref{eq:e-m} we have
\[
E_i\=\sum_{j=1}^i\binom{n-j}{n-i}R_j
\bmod 2.
\]
Now, $R_j$ is even for even $j$, by \lref{lm:identities}(d). When $j$ is odd,
$\binom{n-j}{n-i}$ is even by Lucas' Theorem, given that $n-j$ is even and
$n-i$ is odd. The result follows.
\end{proof}

We also have the following strengthening of \tref{th:E_n-1} for odd orders:

\begin{theorem}\label{th:En-1}
If $L$ is a Latin square of odd order $n$ then
$E_{n-1} \equiv 0 \bmod 4$.
\end{theorem}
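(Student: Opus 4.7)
The strategy is to combine the inclusion-exclusion formula \eref{eq:e-m} with a pairing of each subset with its complement, echoing the pattern used in \tref{th:bala} and \tref{th:det-mult-4}. Setting $m = n-1$ in \eref{eq:e-m} and writing each $R_r$ as $\sum_{|S|=r}\per A_S$, where for a subset $S\subseteq[n]$ the matrix $A_S\in\Lambda_n^{|S|}$ has a $1$ in entry $(i,j)$ iff $L(i,j)\in S$, we get
\[
E_{n-1}=\sum_{S\subseteq[n]}(-1)^{n-1-|S|}(n-|S|)\per A_S.
\]

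The next step is to pair each $S$ with $\bar S=[n]\setminus S$. Since $n$ is odd, $|S|$ and $|\bar S|$ have opposite parities, so every pair is non-degenerate and exactly one representative has even size; I take that representative and call it $S$. Using that $n-1$ is even (so $(-1)^{n-1-|S|}=(-1)^{|S|}$) together with $A_{\bar S}=J-A_S$, the joint contribution of the pair $\{S,\bar S\}$ collapses to $(n-r)\per A_S-r\per(J-A_S)$, where $r=|S|$ is even.

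The heart of the argument is a mod-$4$ analysis of this per-pair quantity according to $r\bmod4$. If $r\equiv 0\bmod4$, then \cyref{cor:odd-n-lambda-4k-mult-4} applied to $A_S\in\Lambda_n^r$ gives $\per A_S\equiv 0\bmod 4$, while the prefactor $r$ makes $r\per(J-A_S)\equiv 0\bmod 4$ automatically. If $r\equiv 2\bmod 4$, then \tref{tm:odd-2mod4-complement-permanent} gives $\per A_S\equiv 2\per(J-A_S)\bmod 4$; combined with $n-r$ being odd and $r\equiv 2\bmod 4$, both $(n-r)\per A_S$ and $r\per(J-A_S)$ reduce to $2\per(J-A_S)\bmod 4$, so they cancel. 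Summing over all pairs yields $E_{n-1}\equiv 0\bmod 4$.

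The main obstacle is merely bookkeeping the signs and parities introduced by the complementary pairing; once one exploits that $n$ odd forces $|S|$ and $|\bar S|$ to have opposite parities, the two available mod-$4$ permanent results---one for regularity $\equiv 0\bmod 4$ and one for regularity $\equiv 2\bmod 4$---dovetail to kill both cases simultaneously without further input.
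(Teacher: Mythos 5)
Your proof is correct and follows essentially the same route as the paper: the same complementary pairing in the $m=n-1$ instance of \eref{eq:e-m}, the same case split on $r\bmod 4$, and the same two mod-$4$ inputs (\cyref{cor:odd-n-lambda-4k-mult-4} and \tref{tm:odd-2mod4-complement-permanent}). The only difference is that you apply those results subset-by-subset, whereas the paper first aggregates them into the statements $R_{4k}\equiv 0$ and $R_k+2R_{n-k}\equiv 0\bmod 4$ (\cyref{cor:odd-ls-mult-4} and \cyref{cor:odd-ls-2-mod-4s}) and then pairs the $R_r$ terms; this is a purely cosmetic distinction.
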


\begin{proof}
  We compute $E_{n-1}$ utilising \eref{eq:e-m}. We pair up the
  complementary terms in this summation, $(n-r)R_r - rR_{n-r}$. 
  Within each of these pairs, we assume that $r$ is
  even, by replacing $r$ by $n-r$ if necessary. 
  We examine two cases. First, if $r \equiv 0 \bmod
  4$, then the second term vanishes modulo 4 and $(n-r)R_r \equiv 0
  \bmod 4$ by \cyref{cor:odd-ls-mult-4}. Alternatively, if $r \equiv 2
  \bmod 4$ then $R_r$ is even, by \lref{lm:identities}(d), so 
  $(n-r)R_r \equiv  R_r \bmod 4$. Thus, 
  $(n-r)R_r - rR_{n-r} \equiv R_r + 2R_{n-r} \bmod
  4$. We may now use \cyref{cor:odd-ls-2-mod-4s}. Each pair of
  complementary terms sums to a multiple of four, so the result
  follows.
\end{proof}

We now shift our attention to Latin squares of even order, where the
results are based on the global relationship between the different
$R_i$ values in contrast with the local nature of
\cyref{cor:odd-ls-mult-4} and \cyref{cor:odd-ls-2-mod-4s}.

\begin{theorem}\label{t:oddevenE}
If $L$ is a Latin square of even order $n>2$ then 
$$E_1 + E_3 + \cdots + E_{n-1} \equiv E_2 + E_4 + \cdots + E_n \equiv n \bmod 4.$$
\end{theorem}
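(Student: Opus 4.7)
The plan is to compute $S_1:=E_1+E_3+\cdots+E_{n-1}$ and $S_2:=E_2+E_4+\cdots+E_n$ by evaluating the generating polynomial $f(z):=\sum_{m=0}^{n}E_m z^m$ at $z=\pm 1$. Substituting \eref{eq:e-m}, swapping the order of summation, and applying the binomial theorem yields
\[
f(z)\;=\;\sum_{r=0}^{n}R_r\,z^r(1-z)^{n-r}.
\]
From this, $f(1)=R_n=n!$ by \lref{lm:identities}(c), while $f(-1)=\sum_{r=1}^{n}(-1)^r 2^{n-r}R_r$. Consequently
\[
S_1=\tfrac12(f(1)-f(-1))\qquad\text{and}\qquad S_2=\tfrac12(f(1)+f(-1)).
\]

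The second step is to reduce these two quantities modulo~$4$. Since $n\ge 4$ is even (this is the only place the hypothesis $n>2$ is used), $n!/2$ is divisible by~$4$, so $f(1)/2$ contributes nothing modulo~$4$. In $f(-1)/2$, the summand of index $r\le n-1$ carries the factor $2^{n-r-1}$, which is~$0$ modulo~$4$ whenever $n-r-1\ge 2$; the $r=n$ summand equals $n!/2\equiv 0$. Hence only the indices $r=n-1$ and $r=n-2$ survive, and as $(-1)^{n-1}=-1$ and $(-1)^{n-2}=+1$ for even~$n$,
\[
\tfrac12 f(-1)\;\equiv\;-R_{n-1}+2R_{n-2}\pmod{4}.
\]

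The final step invokes the inputs already collected in the excerpt. By \lref{lm:identities}(b), $R_{n-1}=nd_n$; and since $n$ is even, \eref{e:deranged} gives $d_n\equiv 1\bmod 4$, so $R_{n-1}\equiv n\bmod 4$. By \lref{lm:identities}(d), $R_{n-2}$ is even (because $n-2$ is even), and thus $2R_{n-2}\equiv 0\bmod 4$. Combining,
\[
S_1\equiv R_{n-1}-2R_{n-2}\equiv n\pmod{4}\quad\text{and}\quad S_2\equiv -R_{n-1}+2R_{n-2}\equiv -n\equiv n\pmod{4},
\]
where the last congruence uses $2n\equiv 0\bmod 4$ for even~$n$. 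The only real obstacle is the careful bookkeeping of the powers of two in $f(-1)/2$ and confirming divisibility of $n!/2$ by~$4$; once those are in hand, everything else is immediate from \lref{lm:identities} and the mod-$4$ behaviour of the derangement numbers.
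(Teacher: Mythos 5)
Your proof is correct and is essentially the paper's argument in generating-function clothing: the paper swaps the order of summation in \eref{eq:e-m} and uses $\sum_s\binom{n-r}{2s-1}=2^{n-r-1}$, which is exactly your evaluation of $f(z)=\sum_r R_r z^r(1-z)^{n-r}$ at $z=-1$, and both proofs then reduce to $\pm(R_{n-1}-2R_{n-2})$ modulo $4$ and finish with \lref{lm:identities}(b),(d), \eref{e:deranged}, and $n!\equiv 0\bmod 4$ for $n\geq 4$. No substantive difference.
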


\begin{proof}By \eref{eq:e-m},
\begin{align*}
  \sum_{m=1}^{n/2} E_{2m-1} &= \sum_{m=1}^{n/2} \sum_{r=1}^{2m-1} (-1)^{2m-1-r}\binom{n-r}{n-2m+1} R_r\\ 
&= \sum_{r=1}^{n-1} (-1)^{n-r-1}R_r\sum_{s=1}^{\lceil(n-r)/2\rceil} \binom{n-r}{2s-1} \\
  &= \sum_{r=1}^{n-1} (-2)^{n-r-1} R_r \equiv R_{n-1} - 2R_{n-2} \equiv n d_n - 0 \equiv n \bmod 4,
\end{align*} 
by \lref{lm:identities} and \eref{e:deranged}.
If $n\ge4$ then $\sum_{i=1}^{n} E_i = n! \equiv 0 \bmod 4$, 
and the second congruence follows.
\end{proof}

\begin{coro}\label{cy:evendiag}
Every Latin square has an even number of diagonals that contain an even number
of symbols.
\end{coro}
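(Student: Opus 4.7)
The plan is to observe that the quantity to be shown even is
$\sum_{i\text{ even}}E_i=E_2+E_4+\cdots$, and then split into cases on the parity of $n$. When $n$ is odd, the sum runs over $i\in\{2,4,\dots,n-1\}$, and Theorem~\ref{th:Eioddn} already tells us that each individual summand is even, so the sum is even and there is nothing more to do.

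When $n$ is even, the sum runs over $i\in\{2,4,\dots,n\}$, and I would invoke Theorem~\ref{t:oddevenE}, which gives $E_2+E_4+\cdots+E_n\equiv n\bmod 4$ for $n>2$. Since $n$ is even, the sum is even. The edge case $n=2$ must be handled separately: inspecting either of the two $2\times 2$ Latin squares shows $E_2=0$, which is trivially even (and $n=1$ is vacuous).

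There is no real obstacle here — the work has already been done in Theorems~\ref{th:Eioddn} and~\ref{t:oddevenE}, and the corollary is essentially just packaging those two results together under a single statement that is insensitive to the parity of $n$.
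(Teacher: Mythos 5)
Your proposal is correct and follows the paper's own proof exactly: the odd case from Theorem~\ref{th:Eioddn}, even orders $n>2$ from Theorem~\ref{t:oddevenE}, and $n=2$ checked directly (where indeed $E_2=0$). No differences worth noting.
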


\begin{proof}
The order $2$ case is trivial and \tref{t:oddevenE} takes care of
all larger even orders.
The odd case is immediate from \tref{th:Eioddn}.
\end{proof}

\begin{coro}
Every Latin square of order $n>1$ has an even number of diagonals that 
contain an odd number of symbols.
\end{coro}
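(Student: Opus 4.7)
The plan is to deduce this corollary directly from \cyref{cy:evendiag} together with the fact that the total number of diagonals of an $n \times n$ Latin square equals $n!$, and $n!$ is even whenever $n > 1$.

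First I would observe that every diagonal of $L$ contains some positive integer number of distinct symbols between $1$ and $n$, so
\[
\sum_{i=1}^{n} E_i(L) \;=\; n!,
\]
since the left-hand side simply counts all $n!$ diagonals once each (one diagonal for each permutation selecting one entry per row and column). For $n > 1$ the right-hand side is even.

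Next I would split the sum by the parity of the index:
\[
\sum_{i \text{ odd}} E_i(L) \;=\; n! - \sum_{i \text{ even}} E_i(L).
\]
By \cyref{cy:evendiag} the second sum on the right is even, and $n!$ is even for $n > 1$, so the left-hand side is even, which is exactly what we wanted to show.

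There is essentially no obstacle here: once \cyref{cy:evendiag} is in hand, the result is a one-line consequence of the trivial identity $\sum_i E_i = n!$ and the parity of $n!$ for $n>1$. The entire content of the statement has already been absorbed into the preceding corollary.
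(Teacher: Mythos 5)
Your argument is correct and is essentially identical to the paper's own proof, which likewise notes that there are $n!\equiv0\bmod 2$ diagonals in total and subtracts the (even) count of diagonals with an even number of symbols given by the preceding corollary. No further comment is needed.
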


\begin{proof}
There are $n!\=0\bmod2$ diagonals,
so the result follows from \cyref{cy:evendiag}.
\end{proof}

The {\em even permanent} $\per\ev$ is defined as the sum of the
products of the entries on the even diagonals of a matrix. In other
words, it has the same definition as \eref{e:perdef} except that the
sum is taken over the alternating group rather than the symmetric
group. Let $R_i\ev$ be defined the same as $R_i$, but using $\per\ev$
in place of $\per$. Similarly, let $E_i\ev$ be the number of {\em
  even} diagonals with exactly $i$ different symbols on them. We
considered even permanents as one possible approach to
\cjref{conj:even}. While that effort was unsuccessful, we did manage
to prove this weak analogue of \tref{t:oddevenE}:

\begin{theorem}\label{tm:evper}
If $L$ is a Latin square of even order $n>2$ then 
$$E_3\ev + E_5\ev + \cdots + E_{n-1}\ev 
\equiv E_1\ev+ E_2\ev + E_4\ev + E_6\ev + \cdots + E_n\ev \equiv 0 \mod 2.$$
\end{theorem}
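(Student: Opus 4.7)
The plan is to mimic the proof of \tref{t:oddevenE} with $\per$ replaced by $\per\ev$ throughout. First, applying \lref{lem:trick} to the polynomial $\per\ev L[X]$ yields the even-permanent analogue of \eref{eq:e-m},
\[
E_m\ev=\sum_{r=1}^{m}(-1)^{m-r}\binom{n-r}{n-m}R_r\ev.
\]
Repeating the manipulations used in \tref{t:oddevenE} with this identity, I expect to obtain
\[
\sum_{m=1}^{n/2}E_{2m-1}\ev = \sum_{r=1}^{n-1}(-2)^{n-r-1}R_r\ev \equiv R_{n-1}\ev \bmod 2,
\]
and an analogous rearrangement (isolating the $r=n$ term, which contributes $R_n\ev=n!/2\equiv 0\bmod 2$ since $n\ge 4$) should give $\sum_{m=1}^{n/2}E_{2m}\ev\equiv R_{n-1}\ev\bmod 2$ as well. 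Since the two sums appearing in the statement are $\sum_{m=1}^{n/2}E_{2m-1}\ev - E_1\ev$ and $E_1\ev+\sum_{m=1}^{n/2}E_{2m}\ev$, the theorem reduces to the single congruence $R_{n-1}\ev\equiv E_1\ev\bmod 2$.

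To establish this, I will interpret $R_{n-1}\ev$ combinatorially. For each symbol $k\in[n]$, let $\tau_k\in\sym_n$ be the permutation defined by $L(i,\tau_k(i))=k$, so that $E_1\ev$ equals $a:=|\{k:\tau_k\text{ is even}\}|$. Evaluating $\langle n-1\rangle\per\ev L[X]$ by choosing which single variable $x_k$ is set to $0$ shows that $R_{n-1}\ev$ counts, for each omitted symbol $k$, the number of $\sigma\in A_n$ with $\sigma(i)\ne\tau_k(i)$ for all $i$. The bijection $\sigma\mapsto\sigma\tau_k^{-1}$ turns these into derangements whose parity matches that of $\tau_k$, so
\[
R_{n-1}\ev = a\,d_n\ev + b\,d_n\od,
\]
where $b=n-a$ and $d_n\ev$, $d_n\od$ denote the numbers of even and odd derangements of $[n]$.

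The finishing move is a parity count. The standard identity $d_n\ev-d_n\od = \det(J-I) = (-1)^{n-1}(n-1)$ is odd for even $n$, while $d_n\ev+d_n\od=d_n\equiv 1\bmod 4$ by \eref{e:deranged} is also odd; hence exactly one of $d_n\ev, d_n\od$ is odd modulo $2$. In either case, since $n$ is even gives $b\equiv a\bmod 2$, a short case check yields $R_{n-1}\ev\equiv a=E_1\ev\bmod 2$. The main obstacle I anticipate is not the algebra of the first step but correctly identifying the combinatorial content of $R_{n-1}\ev$ so that the derangement-parity identity can be brought to bear; once that interpretation is in place, everything reduces to the standard $\det(J-I)$ computation together with a couple of routine congruences modulo~$2$.
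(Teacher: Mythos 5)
Your proof is correct and takes essentially the same route as the paper: both arguments use the even-permanent analogue of \eref{eq:e-m} to reduce the theorem to the single congruence $R_1\ev+R_{n-1}\ev\equiv 0\bmod 2$ (with $R_1\ev=E_1\ev$), and both settle it by interpreting $\per\ev(J-P_k)$ as a count of even or odd derangements and combining $d_n\ev+d_n^{\rm odd}=d_n$ with $d_n\ev-d_n^{\rm odd}=\det(J-I)=1-n$. The only difference is organisational: the paper pairs $\per\ev P_k$ with $\per\ev(J-P_k)$ and solves for the two derangement counts exactly using $d_n\equiv1\bmod 4$, whereas you run a parity-only case check needing just that $d_n$ is odd.
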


\begin{proof}
  Similar to the proof of \tref{t:oddevenE}, we have that 
  \[
    \sum_{m=1}^{n/2} E_{2m-1}\ev \equiv R_{n-1}\ev \mod 2.
  \]
  Hence
  \begin{align}\label{e:evenpereq}
    \sum_{m=3}^{n/2} E_{2m-1}\ev \equiv E_1\ev+R_{n-1}\ev \equiv R_1\ev+R_{n-1}\ev
    \mod 2.
  \end{align}
  Let $P_0$ and $P_1$ be permutation matrices corresponding to arbitrary
  even and odd permutations, respectively.
  Then $\per\ev P_0=1$ and $\per\ev P_1=0$. Also
  $\per\ev(J-P_0)=a$ and $\per\ev(J-P_1)=b$, where $a+b=\per(J-P_0)=d_n$,
  and $a-b=\det(J-P_0)=1-n$, by \eref{e:compdet}.
  Thus $a=(d_n+1-n)/2$ and $b=(d_n-1+n)/2$. By \eref{e:deranged},
  \begin{align*}
    \per\ev P_0+\per\ev(J-P_0)&\equiv1+(d_n+1-n)/2\equiv n/2\equiv
  (d_n-1+n)/2\\
    &\equiv\per\ev P_1+\per\ev(J-P_1)
  \end{align*}
  mod 2.
  Thus, in calculating \eref{e:evenpereq} we can pair up complementary terms
  in $R_1\ev$ and $R_{n-1}\ev$ to show that 
  $R_1\ev+R_{n-1}\ev\equiv n(n/2)\equiv0\mod2$. The result follows,
  since $\sum_i E_i\ev=n!/2\equiv0\mod2$.
\end{proof}

Note that $E_1\ev \equiv n - \pi_s \mod 2$, where $\pi_s$ is the
symbol parity described in \sref{s:intro}. It is curious that the
$E_1\ev$ term in \tref{tm:evper} appears on the side of the congruence
that it does.  The analogous statement for standard permanents follows
by considering \tref{t:oddevenE} modulo 2, and noting that the $E_1$
term can be written on either side of the congruence, since $E_1=n$ is
even.

We next show a parity relationship between consecutive pairs 
in the sequence $E_1,\dots,E_n$.

\begin{theorem}
If $L$ is a Latin square of even order
then $E_{2i-1} \equiv E_{2i} \bmod 2$ for each integer $i$.
\end{theorem}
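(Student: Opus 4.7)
The plan is to compute $E_{2i-1}+E_{2i}$ directly from the inclusion-exclusion formula \eqref{eq:e-m} and then reduce modulo $2$. Since the signs $(-1)^{m-r}$ vanish modulo $2$, the terms with $1 \le r \le 2i-1$ occur in both expansions and combine, by Pascal's identity, with coefficient
\[
\binom{n-r}{n-2i+1}+\binom{n-r}{n-2i}=\binom{n-r+1}{n-2i+1}.
\]
The only leftover is the $r=2i$ term of the second sum, which contributes $R_{2i}$. Hence
\[
E_{2i-1}+E_{2i}\equiv\sum_{r=1}^{2i-1}\binom{n-r+1}{n-2i+1}R_r+R_{2i}\bmod 2.
\]

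The second step is to verify that every summand on the right is even. For even $r$ the factor $R_r$ is even by \lref{lm:identities}(d), and likewise $R_{2i}$ itself is even. For odd $r$, since $n$ is even, the upper argument $n-r+1$ is even while the lower argument $n-2i+1$ is odd; Lucas' theorem then forces $\binom{n-r+1}{n-2i+1}\equiv0\bmod 2$, because the least significant binary digits contribute a factor $\binom{0}{1}=0$. Combining the two cases shows the whole expression is even, which is the desired congruence.

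There is no genuine obstacle here: once Pascal's identity is used to collapse the two sums into one, the parity analysis splits cleanly along the parity of $r$, with \lref{lm:identities}(d) handling even $r$ and Lucas' theorem handling odd $r$. The only thing to be careful about is the off-by-one bookkeeping around $r=2i$, which is why the lone $R_{2i}$ appears outside the main sum.
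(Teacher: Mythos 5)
Your proof is correct, and it follows the same skeleton as the paper's: both start from \eref{eq:e-m}, write $E_{2i-1}+E_{2i}$ as $R_{2i}$ plus a sum over $1\le r\le 2i-1$, and then split on the parity of $r$, with \lref{lm:identities}(d) disposing of the even-$r$ terms. The only real difference is how the coefficient of $R_r$ is shown to be even when $r$ is odd. The paper keeps the signed difference $\binom{n-r}{n-2i}-\binom{n-r}{n-2i+1}$ and rewrites it as $\binom{n-r}{n-2i}\cdot\frac{n-4i+r+1}{n-2i+1}$, an integer whose numerator is even and whose denominator is odd; you instead observe that modulo $2$ the difference is a sum, collapse it by Pascal's identity to the single coefficient $\binom{n-r+1}{n-2i+1}$, and kill it with Lucas' theorem (even upper argument over odd lower argument forces a $\binom{0}{1}$ factor in the last binary digit). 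The two computations establish the same parity for the same coefficient, so this is a variation in one step rather than a different route; your version is arguably a little cleaner, since it avoids the rational manipulation and the side remark that the resulting expression is an integer, and it reuses the Lucas-type argument the paper already employs in \tref{th:Eioddn}. Your bookkeeping around the lone $r=2i$ term is handled correctly.
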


\begin{proof}By \eref{eq:e-m},
\begin{align*}
E_{2i} + E_{2i-1} 
&= R_{2i} + \sum_{r=1}^{2i-1} \left[\binom{n-r}{n-2i} - \binom{n-r}{n-2i+1}\right] (-1)^r R_r\\
&= R_{2i} + \sum_{r=1}^{2i-1} \left[\binom{n-r}{n-2i} - \binom{n-r}{n-2i}\left(\frac{2i-r}{n-2i+1}\right)\right] (-1)^r R_r\\
&= R_{2i} + \sum_{r=1}^{2i-1} \binom{n-r}{n-2i} \left[\frac{n-4i+r+1}{n-2i+1}\right] (-1)^r R_r.
\end{align*}
If $r$ is even, then $R_r$ is even. If $r$ is odd, then $n-4i+r+1$ is even, while $n-2i+1$ is odd, so $\binom{n-r}{n-2i}({n-4i+r+1})/({n-2i+1})$ must be even (it is an integer, since our proof shows that it is the difference of two integers). The result follows.
\end{proof}

It seems that $E_{2i}$ and $E_{2i+1}$ are unrelated except for the case
$E_{n-2}$ and $E_{n-1}$ when $n\equiv 0 \bmod 4$, which is covered in
the following conjecture. 

\begin{conjecture}\label{conj:tij_Ek}
  Let $L$ be a Latin square of order $n$. The following holds for all
  $i,j$ and $r$.
  If $n \equiv 0 \bmod 4$, then $$E_n \equiv
  E_{n-1} \equiv 2E_{n-2} \equiv 2t_{ij} \equiv N_r \bmod 4,$$
  \begin{equation}\label{eq:sum-r_i}
    R_1 + R_3 + \cdots + R_{n-1} \equiv 0 \bmod 4 \qquad\text{and}\qquad
    R_2+R_4+\cdots+R_n \equiv E_n \bmod 4.
  \end{equation}
  If $n \equiv 2 \bmod 4$, then $E_{n-1} \equiv 2t_{ij} \equiv N_r \bmod 4$.
\end{conjecture}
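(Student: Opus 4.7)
The plan is first to simplify the web of mod-$4$ congruences in \cjref{conj:tij_Ek} to a minimal list, then attack what remains by extending the techniques of Sections~\ref{s:2-mod-4-trans}--\ref{s:depleted}. Introduce the generating polynomial $P(y)=\sum_{r=0}^n R_r\,y^r$. A standard double-counting gives $P(1)=\sum_{S\subseteq[n]}\per L_S=\sum_\sigma 2^{n-w(\sigma)}$, where $w(\sigma)$ is the weight of the diagonal $\sigma$, and grouping by weight yields $P(1)\equiv E_n+2E_{n-1}+4E_{n-2}\bmod 8$. Meanwhile \eref{eq:e-m} with $m=n$ gives $P(-1)=\sum_r(-1)^r R_r=E_n$ for even $n$, so
\[
R_1+R_3+\cdots+R_{n-1}=\tfrac12\bigl(P(1)-P(-1)\bigr)\equiv E_{n-1}+2E_{n-2}\bmod 4.
\]
Hence both identities in \eref{eq:sum-r_i} collapse to the single congruence $E_{n-1}\equiv 2E_{n-2}\bmod 4$ (the even-$R_i$ companion following from $E_n=\sum_r(-1)^r R_r$ once the odd-$R_i$ identity is in hand). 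Combined with $\sum_r N_r=2E_{n-1}$ and $\sum_{r,c}t_{rc}=nE_n+2E_{n-1}$ from \lref{lm:delta-along-row} and \lref{lm:delta-along-everyone}, this reduces the $n\equiv 0\bmod 4$ case to four independent claims: (a)~$E_n\equiv E_{n-1}\bmod 4$; (b)~$E_{n-1}\equiv 2E_{n-2}\bmod 4$; (c)~$E_{n-1}\equiv N_r\bmod 4$; (d)~$E_{n-1}\equiv 2t_{ij}\bmod 4$. For $n\equiv 2\bmod 4$ only (c) and (d) are asserted.

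To attack (a) and (b), I would push the complementary-pair technique of \tref{th:det-mult-4} one order of divisibility higher. For $n\equiv 0\bmod 4$ and odd $k$, \eref{e:compdet} gives $k\det(J-A)\equiv k\det A\bmod 4$, so $\det A\equiv\det(J-A)\bmod 4$ for $A\in\Lambda_n^k$: the $n\equiv 0\bmod 4$ analog of \lref{lem:sum-of-complement-0-mod-4}. Applying this to \eref{eq:signed-trans-incl-excl} and pairing $\langle r\rangle\det L[X]$ with $\langle n-r\rangle\det L[X]$ for odd $r$ should yield mod-$4$ information on $E^{\pm}_n(L)$, and via the conjugate-sum identity \eref{eq:sum} applied to $L$ and its $231$- and $312$-conjugates, on $E_n(L)$, $E_{n-1}(L)$ and $E_{n-2}(L)$ simultaneously. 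For (c) and (d), I would first try to refine \cyref{cy:same-delta} to a mod-$4$ statement---this is precisely \cjref{conj:4-entries-add-up-to-4}---and then feed the resulting block description of $[t_{ij}\bmod 4]$ into \lref{lm:delta-along-row}, using (a) (or \tref{th:trans-mult-4} when $n\equiv 2\bmod 4$) to pin down $N_r$ and $t_{ij}$ modulo $4$.

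The main obstacle I foresee is the gap between determinants and permanents modulo $4$. The paper's machinery provides only $\per\equiv\det\bmod 2$, and the sole mod-$4$ permanent identity it establishes, \tref{tm:odd-2mod4-complement-permanent}, requires $n$ odd and leans on the block analysis of $[\per A(i\mmid j)]$ supplied by \tref{t:perminorsquad}. For even $n$ an analog of \tref{t:perminorsquad} is exactly \cjref{conj:4-entries-add-up-to-4}, so I expect \cjref{conj:tij_Ek} to stand or fall with \cjref{conj:4-entries-add-up-to-4}: once the block structure of $[t_{ij}\bmod 4]$ is known, (c) and (d) should follow from \lref{lm:delta-along-row}, while the matching block structure of $[\per A(i\mmid j)]$ for the $(0,1)$-matrices $A=L_S$ arising in the $R_r$ should supply the mod-$4$ permanent-pairing identity needed to push through (a) and (b).
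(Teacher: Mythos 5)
This statement is \cjref{conj:tij_Ek}, which the paper leaves open: there is no proof to compare against, only the short remark following the conjecture that establishes $\sum_i R_i\equiv E_n\bmod 4$ and $E_{n-1}+2E_{n-2}\equiv R_1+R_3+\cdots+R_{n-1}\bmod 4$, hence the equivalence of \eref{eq:sum-r_i} with $E_{n-1}\equiv 2E_{n-2}\bmod 4$. Your reductions are correct and in fact reproduce and slightly extend that remark: the generating-function computation $P(1)\equiv E_n+2E_{n-1}+4E_{n-2}\bmod 8$, $P(-1)=E_n$ for even $n$, and the identity $\sum_r N_r=2E_{n-1}$ obtained from Lemmas~\ref{lm:delta-along-row} and~\ref{lm:delta-along-everyone} all check out, and they do collapse the conjecture to your claims (a)--(d). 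But a reduction of one open conjecture to four other unproved claims is not a proof, and you say as much: your route to (c) and (d) explicitly presupposes \cjref{conj:4-entries-add-up-to-4}, which is also open.

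Beyond that, two concrete steps in your plan would fail as stated. First, for (a)--(b): when $n\equiv 0\bmod 4$ and $k$ is odd, \eref{e:compdet} gives $\det(J-A)\equiv\det A\bmod 4$, so the complementary pair in \eref{eq:signed-trans-incl-excl} contributes $\det A+\det(J-A)\equiv 2\det A\bmod 4$ --- it does \emph{not} vanish, unlike the $n\equiv 2\bmod 4$ case of \lref{lem:sum-of-complement-0-mod-4} where the two determinants are negatives of each other mod 4. So the pairing yields no control on $E^{\pm}_n\bmod 4$ without further input, and in any case the conjugate identity \eref{eq:sum} only involves $E_n$ and the three $E^{\pm}_n$ values; it says nothing about $E_{n-1}$ or $E_{n-2}$, which is what (a) and (b) are about. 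Second, for (c): even granting \cjref{conj:4-entries-add-up-to-4}, feeding the block structure of $[t_{ij}\bmod 4]$ into \lref{lm:delta-along-row} with $n\equiv 0\bmod 4$ gives $E_n+N_r\equiv 2\ell\bmod 4$ for a constant $\ell$ independent of $r$, which shows $N_r$ is constant mod 4 but does not identify that constant with $E_{n-1}\bmod 4$ (summing over $r$ only recovers $2E_{n-1}\equiv 0\bmod 4$). So the proposal is a reasonable research programme, but it neither proves the statement nor identifies a complete path to doing so.
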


Using \eref{eq:e-m}, \tref{th:bala} and \lref{lm:identities}(d),
if $n \equiv 0 \bmod 4$, then $\sum R_i \equiv E_n \bmod 4$.
Also, $E_{n-1} + 2E_{n-2}\equiv R_1 + R_3 + \cdots + R_{n-1}\bmod4$. Thus,
$E_{n-1} \equiv 2E_{n-2} \bmod 4$ is equivalent to \eref{eq:sum-r_i}. Note, 
for even $n$, that
$E_n$, $E_{n-1}$ and $N_r$ are even by \tref{th:bala}, \tref{th:E_n-1} and
\cyref{cor:n-r-even}, respectively.

Suppose that $n\=0\bmod4$ and that \cjref{conj:tij_Ek} holds. It
follows that $N_r+E_n\=0\bmod4$, which combines with
\lref{lm:delta-along-row} to imply that the number of transversals in
any $(n-1)\times n$ Latin rectangle is divisible by 4, whenever $n$
itself is divisible by 4.

Our results to this point have all been congruences mod 2 or 4. We
finish by showing for any given order $n$ that $R_2$ and $E_2$ have
only two possible values mod 6. The main interest in this result is 
that it involves a different modulus to our other results.

\begin{lemma}\label{l:mod3}
Let $L$ be a Latin square of any order $n$. Then $R_2\=E_2\=0\bmod2$ and
\[
R_2\nequiv(-1)^n(n+1)\bmod3\mbox{ \ and \ }
E_2\nequiv(-1)^n(n+1)-n(n-1)\bmod3.
\]
\end{lemma}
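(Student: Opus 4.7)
The parity claims come essentially for free. \lref{lm:identities}(d) already gives $R_2$ even, and from \eref{eq:e-m} together with $R_1 = n$ we get $E_2 = R_2 - n(n-1)$, so $E_2$ is even as well, and, more importantly, the $E_2$ congruence mod~$3$ is equivalent to the $R_2$ congruence mod~$3$. So the substantive task is to pin down the possible residues of $R_2$ modulo $3$.

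The plan is to evaluate $R_2 \bmod 3$ in closed form via the sign homomorphism on $\sym_n$. Write $R_2 = \sum_{\{a,b\}} \per(P_a + P_b)$, where $P_s$ is the permutation matrix of $\sigma_s$ (the row-to-column symbol-position permutation, so $L_{i,\sigma_s(i)} = s$). Since $P_a + P_b$ is the biadjacency matrix of a 2-regular bipartite graph $G_{\{a,b\}}$, its permanent equals $2^{k(a,b)}$, where $k(a,b)$ is the number of cycles of $G_{\{a,b\}}$. These cycles correspond one-to-one with the cycles of $\pi_{ab} := \sigma_b^{-1}\sigma_a \in \sym_n$: a $c$-cycle of $\pi_{ab}$ lifts to a $2c$-cycle in $G_{\{a,b\}}$. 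Therefore $\mathrm{sgn}(\pi_{ab}) = (-1)^{n - k(a,b)}$, equivalently $(-1)^{k(a,b)} = (-1)^n\, \mathrm{sgn}(\sigma_a)\,\mathrm{sgn}(\sigma_b)$.

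Because $2 \equiv -1 \bmod 3$, this yields
\[
R_2 \equiv (-1)^n \sum_{a<b} s_a s_b \bmod 3, \qquad s_i := \mathrm{sgn}(\sigma_i)\in\{\pm1\}.
\]
Let $p$ and $q$ count the indices with $s_i = +1$ and $s_i = -1$ respectively, so $p + q = n$. A standard identity gives $\sum_{a<b} s_a s_b = \tfrac{1}{2}\bigl((p-q)^2 - n\bigr)$. If one had $R_2 \equiv (-1)^n(n+1) \bmod 3$, then multiplying through by $2(-1)^n$ would force $(p-q)^2 \equiv 3n + 2 \equiv 2 \bmod 3$, which is impossible since squares modulo $3$ lie in $\{0,1\}$. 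This proves the $R_2$ statement, and the relation $E_2 = R_2 - n(n-1)$ then delivers the $E_2$ statement.

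I expect the only step requiring care is the cycle correspondence in the middle paragraph: pinning down $\per(P_a + P_b) = 2^{k(a,b)}$ and correctly expressing $\mathrm{sgn}(\pi_{ab})$ in terms of $\mathrm{sgn}(\sigma_a)\,\mathrm{sgn}(\sigma_b)$. Once that sign identity is in place, the mod~$3$ obstruction collapses to the elementary fact that $2$ is a quadratic non-residue modulo $3$.
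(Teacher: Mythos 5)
Your proof is correct and follows essentially the same route as the paper's: both express $R_2$ as $\sum_{\{a,b\}}2^{c(a,b)}$, use $2\equiv-1\bmod3$ to turn this into $(-1)^n\sum_{a<b}s_as_b$ via the cycle-count/sign identity, and then rule out the forbidden residue in terms of the number of even versus odd symbol permutations. The only cosmetic difference is at the very end, where you package $\binom{p}{2}+\binom{q}{2}-pq$ as $\tfrac12\bigl((p-q)^2-n\bigr)$ and invoke that $2$ is a non-square mod $3$, whereas the paper finishes with a short case analysis on $e\bmod 3$; these are equivalent.
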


\begin{proof}
By \eref{eq:e-m} we have that $E_2=R_2-(n-1)R_1=R_2-n(n-1)$, so it
suffices to prove the claims about $R_2$. 
By \lref{lm:identities}(d), we know that $R_2$ is even, and it follows 
immediately that $E_2$ is even as well.

For each symbol $s$ of $L$,
define a permutation $\theta_s:[n]\rightarrow[n]$ by $\theta_s(i)=j$ if
$L_{ij}=s$.  Then $R_2$ is the sum over symbols $s,s'\in[n]$ of
$2^{c(s,s')}$, where $c(s,s')$ is the number of cycles in
$(\theta_s)^{-1}\theta_{s'}$. The number of cycles in any permutation
$\sigma\in\sym_n$ is $n-\epsilon(\sigma)\bmod2$.  Hence
\begin{align*}
R_2&=\sum_{s,s'}2^{c(s,s')}
\=\sum_{s,s'}(-1)^{n-\epsilon((\theta_s)^{-1}\theta_{s'})}
\=(-1)^n\sum_{s,s'}(-1)^{\epsilon(\theta_s)+\epsilon(\theta_{s'})}\\
&\=(-1)^n\left(\binom{e}{2}+\binom{n-e}{2}-e(n-e)\right)
\bmod3,
\end{align*}
where $e=\big|\{s\in[n]:\epsilon(\theta_s)=0\}\big|$.
The required result now follows by a simple case analysis concerning
the value of $e$ mod 3.
\end{proof}

\section{\label{s:conclusion}Concluding remarks}

We have shown a number of congruences satisfied by various quantities
motivated by the study of transversals in Latin squares.  There are many
others which are direct consequences of the results we have given. For
example, it is easy to use \eref{eq:e-m}, \lref{lm:identities}(d) and
\cyref{cor:odd-ls-mult-4} to show that $E_8\=0\bmod4$ when $n\=3\bmod4$,
given that
\begin{align*}
&\binom{n-7}{1}\=\binom{n-5}{3}\=\binom{n-3}{5}\=\binom{n-2}{6}
\=\binom{n-1}{7}\=0\bmod4\mbox{ \ and \ }\\
&\binom{n-6}{2}\=0\bmod2.
\end{align*}

As mentioned in the introduction, a notion of parity has been useful
in a number of different studies of Latin squares. In this paper we
have introduced parity for transversals of Latin squares, and used it
in the analysis of the number of transversals. In our investigation we
uncovered a number of interesting patterns, some of which we have
proved, and others we conjecture.  Several of the conjectures classify
Latin squares of a given even order into two types which seem to have
different properties. These classifications do not seem to be related
to each other, or to pre-existing notions of parity.  In
\cjref{conj:even} the value of $w$ mod $2$, say, partitions Latin
squares based on the parity of their transversals, while
\cjref{conj:tij_Ek} partitions Latin squares into two classes based on
$E_{n-1}$ modulo 4. However, these partitions seem to be independent
of each other and of the previously studied parities $\pi_r$ and
$\pi_c$. By randomly generating Latin squares, we found a Latin square
with each of the 16 possibilities for $(w,E_{n-1}/2,\pi_r,\pi_c)\bmod 2$ for
orders 8, 10 and 12.

Finally, we remark that we have only considered the classical
$2$-dimensional case in this paper. However, transversals are of
interest in the context of Latin hypercubes and permanents can also be
generalised to higher dimensions. All of the questions that we
have investigated could also be asked in these higher dimensional
contexts. A first step in this direction has been taken by Taranenko
\cite{Tar16}, who noted that \tref{th:bala} generalises to Latin
hypercubes.

\subsection*{Acknowledgement}
The authors are grateful to Saieed Akbari for interesting discussions on
the topic of this paper.

  \let\oldthebibliography=\thebibliography
  \let\endoldthebibliography=\endthebibliography
  \renewenvironment{thebibliography}[1]{%
    \begin{oldthebibliography}{#1}%
      \setlength{\parskip}{0.2ex}%
      \setlength{\itemsep}{0.2ex}%
  }%
  {%
    \end{oldthebibliography}%
  }


\begin{thebibliography}{99}

\bibitem{AL15}
R.~Aharoni and M.~Loebl, 
The odd case of Rota's bases conjecture,
{\em Adv. Math.} {\bf282} (2015), 427--442.

\bibitem{ABMW14}
R.\,E.\,L.~Aldred, R.\,A.~Bailey, B.\,D.~McKay and I.\,M.~Wanless,
Circular designs balanced for neighbours at distances one and two,
Biometrika {\bf101} (2014), 943--956.



\bibitem{AT92} N.~Alon and M.~Tarsi, 
Colorings and orientations of graphs, 
Combinatorica {\bf 12} (1992) 125--134.

\bibitem{Alp17}
L.~Alpoge,
Square-root cancellation for the signs of Latin squares,
{\it Combinatorica} {\bf 37} (2017), 137--142.

\bibitem{AA04}
S. Akbari and A. Alipour. 
Transversals and multicolored matchings. 
{\it J. Combin. Designs} {\bf12} (2004), 325--332.

\bibitem{Bal90}
K. Balasubramanian. 
On transversals in Latin squares, 
\textit{Linear Algebra Appl.} {\bf131} (1990), 125--129.


\bibitem{BN19}
J.~Bar\'at and Z.\,L.~Nagy,
Transversals in generalized Latin squares,
{\it Ars Math. Contemp.} {\bf16} (2019), 39--47.


\bibitem{BHWWW18}
D.~Best, K.~Hendrey, I.\,M.~Wanless, T.\,E.~Wilson and D.\,R.~Wood,
Transversals in Latin arrays with many distinct symbols,
{\it J.\ Combin.\ Des.} {\bf26} (2018), 84--96.


\bibitem{CW16}
N.\,J.~Cavenagh and I.\,M.~Wanless, 
There are asymptotically the same number of Latin squares of each parity,
{\it Bull. Aust. Math. Soc.} {\bf94} (2016), 187--194.

\bibitem{DGGL10} 
D.\,M.~Donovan, M.\,J.~Grannell, T.\,S.~Griggs and 
J.\,G.~Lefevre, On parity vectors of Latin squares, {\it Graphs Combin.} 
{\bf26} (2010) 673--684.

\bibitem{FHW18}
N.~Franceti\'c, S.~Herke and I.\,M.~Wanless, 
Parity of sets of mutually orthogonal Latin squares,
{\it J. Combin. Theory Ser. A\/} {\bf155} (2018), 67--99.

\bibitem{Gly10} D.\,G.~Glynn,
The conjectures of Alon-Tarsi and Rota in dimension prime minus one,
{\it SIAM J. Discrete Math.} {\bf24} (2010), 394--399. 

\bibitem{GB12} D.\,G.~Glynn and D.~Byatt, 
Graphs for orthogonal arrays and projective planes of even order,  
{\it SIAM J. Discrete Math.} {\bf 26} (2012), 1076--1087.

\bibitem{Jan95} J.\,C.\,M.~Janssen, 
On even and odd Latin squares, 
{\it J. Combin. Theory Ser. A}, {\bf 69} (1995) 173--181.

\bibitem{KMOW14}
P.~Kaski, A.\,D.\,S. Medeiros, P.\,R.\,J.~\"Osterg\aa rd and I.\,M.~Wanless, 
Switching in one-factorisations of complete graphs,
{\it Electron. J. Comb.\/} {\bf21}(2) (2014), \#P2.49.

\bibitem{KY20}
P.~Keevash and L.~Yepremyan,
On the number of symbols that forces a transversal,
{\it Comb. Prob. Comput.}, to appear. DOI 10.1017/S0963548319000282



\bibitem{Kot12}
D.~Kotlar,
Parity types, cycle structures and autotopisms of Latin squares,
{\em Electron. J. Combin.} {\bf19}(3) (2012), \#P10.


\bibitem{MMW06}
B.\,D.~McKay, J.\,C.~McLeod and I.\,M.~Wanless, 
The number of transversals in a Latin square, 
{\it Des.\ Codes Cryptogr.} {\bf40} (2006), 269--284.

\bibitem{MPS19} 
R. Montgomery, A. Pokrovskiy and B. Sudakov,
Decompositions into spanning rainbow structures,
\emph{Proc. Lond. Math. Soc.} {\bf119} (2019), 899--959.


\bibitem{New78}
M. Newman,
Combinatorial matrices with small determinants, 
{\it Canad. J. Math.} {\bf30} (1978), 756--762.

\bibitem{Rys63}
H.\,J. Ryser, Combinatorial Mathematics, 
\textit{The Carus Mathematical Monographs} {\bf14}, 
The Mathematical Association of America, 1963.

\bibitem{ryser}
H.\,J. Ryser, Neuere Probleme der Kombinatorik, 
\textit{Vortrage \"uber Kombinatorik Oberwolfach}, 24-29 Juli (1967), 69--91.

\bibitem{SW12}
D.\,S.~Stones and I.\,M.~Wanless, 
How not to prove the Alon-Tarsi conjecture,
{\it Nagoya Math.\ J.} {\bf205} (2012), 1--24.

\bibitem{Tar16}
A.\,A.~Taranenko, 
Permanents of multidimensional matrices: properties and applications, 
{\em J. Appl. Ind. Math.} {\bf10} (2016), 567--604.

\bibitem{Wan04} I.\,M.~Wanless, 
Cycle switches in Latin squares, 
{\it Graphs Combin.} {\bf20} (2004) 545--570.


\bibitem{trans-survey}
I.\,M.~Wanless,
Transversals in Latin squares: a survey. 
In \textit{Surveys in combinatorics 2011}, 
London Math. Soc. Lecture Note Ser. {\bf392}, 403--437. 
Cambridge Univ. Press, Cambridge, 2011.


\end{thebibliography}
\end{document}